\documentclass[11pt,a4paper]{article}
\usepackage[left=22mm,right=22mm,top=20mm,bottom=22mm]{geometry}
\usepackage{amsmath,amsthm,amssymb,mathtools}

\usepackage{microtype,booktabs,longtable,array,needspace}
\usepackage{xurl}
\usepackage[unicode,hidelinks]{hyperref}
\usepackage{bookmark}
\newtheorem{maintheorem}{Theorem}
\newtheorem{lemma}{Lemma}[section]
\newtheorem{proposition}[lemma]{Proposition}
\newtheorem{corollary}[lemma]{Corollary}
\theoremstyle{definition}

\newtheorem{definition}[lemma]{Definition}
\hypersetup{pdftitle={Hausdorff dimension of canonical height spectra},pdfauthor={Wayne Peng}}
\title{Hausdorff dimension of canonical height spectra}
\author{Wayne Peng\\[3pt]
\small Department of Mathematics, National Central University\\
\small\href{mailto:Junwen.Wayne.peng@math.ncu.edu.tw}{\texttt{Junwen.Wayne.peng@math.ncu.edu.tw}}}
\date{}
\begin{document}
\maketitle
\begin{abstract}
We study the Hausdorff dimension of the closure of the canonical
height values associated to finite compositions of two rational maps.
For rational maps with common periodic critical points, a nonzero
local--global height discrepancy gives a nonempty set of starting
points with positive-dimensional spectra. For polynomials, equality
of the normalized logarithms of the leading coefficients at every place
forces dimension zero; a single-place discrepancy gives positive
dimension under boundedness at the other places. We construct rational
cubic pairs with spectra of dimension \(\log2/\log3\).
The spectrum dimension gives a lower bound for the growth exponent of
the number of distinct orbit points of bounded height. At the degree
exponent, positive Hausdorff measure gives matching growth bounds for
multiplicatively dependent degrees. For independent degrees, a
restriction of the covering intervals and a Diophantine rotation estimate
give a lower bound with logarithmic loss of exponent strictly less than
\(1/2\).
Quadratic monomial examples show that the spectrum dimension can depend
on the starting point even when both individual orbits are infinite.
\end{abstract}

\section{Introduction}\label{sec:introduction}
Let $\phi,\psi\in\overline{\mathbb Q}(t)$ be rational maps of degree at least $2$. Each nonempty finite composition $u$ of these maps has a dynamical canonical height $\widehat h_u$. For $P\in\mathbb P^1(\overline{\mathbb Q})$, we consider
\[
X_{\phi,\psi}(P)=\{\widehat h_u(P):u\in\langle\phi,\psi\rangle,\ u\ne\operatorname{id}\},
\qquad
\widehat X_{\phi,\psi}(P)=\overline{X_{\phi,\psi}(P)}\subseteq\mathbb R.
\]
The identity is allowed as an empty prefix, but is excluded from the height indices. We call $\widehat X_{\phi,\psi}(P)$ the completed height spectrum. The set of finite compositions is countable, so $X_{\phi,\psi}(P)$ itself always has Hausdorff dimension zero. The closure can be much larger.

Canonical heights describe the growth of Weil heights under iteration; see Call--Silverman~\cite{CallSilverman1993} and Silverman~\cite{Silverman2007}. Kawaguchi~\cite{Kawaguchi2007} constructed corresponding heights for bounded sequences of morphisms. Related height-growth questions for nonautonomous iteration are studied by Hindes~\cite{Hindes2021Growth}. In our finite-generator setting, the sequence-height map is continuous, and its image is the completed height spectrum. Thus the spectrum records all height limits obtained by allowing the generators to vary along an infinite sequence.

The degrees give a general upper bound. If $s_0>0$ satisfies
\[
d_\phi^{-s_0}+d_\psi^{-s_0}=1,
\]
then $\dim_H\widehat X_{\phi,\psi}(P)\le s_0\le1$; see Proposition~\ref{prop:hausdorff-upper-bound}. To obtain a lower bound, we use separation between canonical heights along the composition orbit. The height estimate underlying this argument is related to the height ping-pong method in Bell--Huang--Peng--Tucker~\cite[Proposition~3.1 and Lemma~3.2]{BHPT2024}. Here the separation yields a quantitative Hausdorff-dimension bound.

Beaumont~\cite[Sections~2--3]{Beaumont2025UniformTits} develops
the contraction argument on a space of height functions to prove
a uniform Tits alternative. The spectra considered here are
evaluation images at a fixed algebraic point. We study their
Hausdorff dimension through local arithmetic criteria and relate
their Hausdorff measure to orbit counts by height.

We first give a criterion for positive-dimensional spectra using
common periodic critical points. Different common points may be
selected at different places.

\begin{maintheorem}\label{thm:main-1}\label{prop:critical-basin-criterion}
Let \(K\) be a number field, let \(\phi,\psi\in K(t)\) have degrees at least \(2\), and fix homogeneous lifts \(\widetilde\phi,\widetilde\psi\) over \(K\), for use at every place. Let \(S\subseteq M_K\) be finite, containing all archimedean places, such that both lifts have integral coefficients and unit resultant at every finite place outside \(S\).

For each \(v\in S\), choose a point \(\alpha_v\in\mathbb P^1(K)\) that is periodic and critical for both maps. Choose \(r,s\ge1\) such that
\[
\phi^r(\alpha_v)=\psi^s(\alpha_v)=\alpha_v\qquad(v\in S).
\]
These return times are chosen to work for every \(v\in S\), by taking
common multiples of the finitely many periods.
For a nonzero representative \(\widetilde\alpha_v\), write
\[
\widetilde\phi^{ r}(\widetilde\alpha_v)
=\lambda_{\phi,v}\widetilde\alpha_v,
\qquad
\widetilde\psi^{ s}(\widetilde\alpha_v)
=\lambda_{\psi,v}\widetilde\alpha_v.
\]
Set
\[
\Lambda=
\frac1{[K:\mathbb Q]}
\sum_{v\in S}n_v
\left(
\frac{\log|\lambda_{\phi,v}|_v}{d_\phi^r-1}
-
\frac{\log|\lambda_{\psi,v}|_v}{d_\psi^s-1}
\right).
\]
If \(\Lambda\ne0\), there are closed coordinate disks \(D_v\) centered at \(\alpha_v\), preserved by both \(\phi^r\) and \(\psi^s\), such that
\[
\mathcal U=
\{P\in\mathbb P^1(K):P\in D_v\text{ for every }v\in S\}
\]
is nonempty and
\[
\dim_H\widehat X_{\phi,\psi}(P)>0
\]
for $P\in\mathcal U$.
\end{maintheorem}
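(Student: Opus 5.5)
Set \(F=\phi^r\) and \(G=\psi^s\), of degrees \(D_F=d_\phi^r\) and \(D_G=d_\psi^s\). Compositions of \(F\) and \(G\) are compositions of \(\phi,\psi\), so \(\widehat X_{\phi,\psi}(P)\) contains the closure of the canonical heights \(\widehat h_w(P)\) for nonempty words \(w\) in \(F,G\). It therefore suffices to show that this smaller spectrum has positive Hausdorff dimension. The strategy is a self-similar (Cantor) construction. For a word \(w\), the height \(\widehat h_{wF}(P)\) compared with \(\widehat h_{wG}(P)\) differs, after normalization by the degree of \(w\), by a fixed nonzero amount. This yields an iterated-function-type separation and hence a lower bound of the form \(\log2/\log\max(D_F,D_G)\)-type, or at least some positive exponent.

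\textbf{Step 1: local heights near the superattracting point.} Since \(\alpha_v\) is fixed and critical for \(F\) and \(G\), it is a superattracting fixed point of each. I would choose small closed coordinate disks \(D_v\) around \(\alpha_v\). In an affine coordinate centered at \(\alpha_v\), \(F(z)=cz^{e}(1+\dots)\) with \(e\ge2\), so a small enough disk is mapped into itself by both maps. For \(P\in D_v\) and any word \(w\), all iterates stay in \(D_v\). The lift then satisfies \(\widetilde F(\widetilde Q)=\lambda_{F,v}\widetilde\alpha_v\cdot(1+O(\text{dist}))\) in the sense that \(\log\|\widetilde F(\widetilde Q)\|_v=\log|\lambda_{F,v}|_v+\log\|\widetilde Q\|_v\cdot D_F+\epsilon_v(Q)\), with an error \(\epsilon_v\) that is small and bounded uniformly on \(D_v\). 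Here one normalizes \(\widetilde Q\) close to \(\widetilde\alpha_v\) and uses \(\widetilde F(\widetilde\alpha_v)=\lambda_{F,v}\widetilde\alpha_v\), with \(\lambda_{F,v}=\lambda_{\phi,v}\) since the chosen lifts satisfy \(\widetilde\phi^r=\widetilde F\). At places outside \(S\), good reduction gives \(\log\|\widetilde F(\widetilde Q)\|_v=D_F\log\|\widetilde Q\|_v\) exactly.

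\textbf{Step 2: a recursion for \(\widehat h_w(P)\).} Write the canonical height along a word through the telescoping Kawaguchi/Call--Silverman sum of local lift terms. For a word \(w=g_1\cdots g_n\) applied so that \(g_1\) acts last, I would derive
\[
\widehat h_{g w}(P)=\frac{1}{\deg g}\widehat h_{w}(g P)\quad\text{and}\quad \widehat h_{w}(Q)\approx\widehat h_w(\alpha\text{-type constant})+\text{small}.
\]
Comparing the two one-step extensions through a fixed word gives
\[
\widehat h_{uF\cdots}(P)-\widehat h_{uG\cdots}(P)=\frac{1}{\deg u}\bigl(c_F-c_G+\eta\bigr).
\]
Here \(c_F=\frac{1}{[K:\mathbb Q]}\sum_{v\in S}n_v\frac{\log|\lambda_{F,v}|_v}{D_F-1}\), and \(c_G\) is defined analogously. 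This \(c\) is the value of the height of the constant tail \(FFF\cdots\) versus \(GGG\cdots\), which arises from the geometric sum \(\sum D_F^{-k}\log|\lambda|\). Hence \(c_F-c_G=\Lambda\). Shrinking the disks makes \(|\eta|<|\Lambda|/4\) uniformly. This gives separation: the cylinder sets \(\{\widehat h_{u\cdot\omega}(P)\}\) for tails beginning with \(F^m\) versus \(G^m\) (taking a fixed block length \(m\) if needed) lie in intervals whose images are disjoint, with gap at least \(|\Lambda|/(2\deg u)\). Their diameters are at most \(C/\deg u\).

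\textbf{Step 3: dimension.} The spectrum contains the image of the continuous sequence-height map on \(\{F^m,G^m\}^{\mathbb N}\). The map is Lipschitz-like with contraction \(1/\deg\) and has the separation above, so a standard mass-distribution argument applies. Put the measure giving each cylinder weight \(D_F^{-ms}\) or \(D_G^{-ms}\), with \(s\) solving the pressure equation. Comparing interval lengths to masses then gives \(\dim_H\ge s>0\). Nonemptiness of \(\mathcal U\) follows from weak approximation in \(K\): choose \(P\in\mathbb P^1(K)\) close to \(\alpha_v\) simultaneously at the finitely many \(v\in S\).

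The main obstacle is Step 2. One must make the error \(\eta\) uniform in the word \(u\) and the starting point, which requires that the orbit stays in \(D_v\) at every \(v\in S\) simultaneously. One must also check that the comparison of lifts is independent of scaling choices, so that only \(\Lambda\) survives. I would first verify this for the pure tails \(F^\infty\) versus \(G^\infty\) and then propagate it through the recursion.
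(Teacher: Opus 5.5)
Your proposal is correct and is essentially the paper's proof: the local computation at the common superattracting points gives the orbitwise gap $|\widehat h_\phi(y)-\widehat h_\psi(y)|\ge|\Lambda|/2$ on the invariant set $\mathcal U$, and your block-length separation plus mass-distribution step is what the paper packages as Lemma~\ref{lem:orbitwise-gap}, with weak approximation for $\mathcal U\ne\varnothing$. Two small points. Read your Step~2 identity only for the constant tails $F^\infty,G^\infty$: arbitrary continuations after $F$ and $G$ can touch, e.g.\ when $D_F=D_G=2$, so the blocks $F^m,G^m$ are genuinely needed. Also, $\epsilon_v$ contains the constant $-(D_F-1)\log\|\widetilde\alpha_v\|_v$, which is not small, but it is harmless because it contributes the same $-\log\|\widetilde\alpha_v\|_v$ for $F$ and $G$.
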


The nonzero condition on \(\Lambda\) prevents cancellation between
the local contributions. The proof, given in
Section~\ref{sec:general-criteria}, combines local canonical-height
estimates with Lemma~\ref{lem:orbitwise-gap}.

For polynomials, the local behavior near infinity provides concrete criteria. The normalized logarithms of the leading coefficients distinguish a zero-dimensional case from a sufficient condition for positive dimension.

\begin{maintheorem}\label{thm:main-2}\label{prop:polynomial-zero}\label{prop:polynomial-positive}
Let \(\phi,\psi\in K[t]\) have degrees \(d_\phi,d_\psi\ge2\)
and leading coefficients \(a_\phi,a_\psi\), where \(K\) is a number
field. For \(v\in M_K\), put
\[
\delta_v=\frac{\log|a_\phi|_v}{d_\phi-1}
          -\frac{\log|a_\psi|_v}{d_\psi-1}.
\]
\begin{enumerate}
\renewcommand{\theenumi}{\roman{enumi}}
\renewcommand{\labelenumi}{(\theenumi)}
\item\label{part:main-2-zero}
If
\begin{equation}\label{eq:zero-1}
\delta_v=0\qquad(v\in M_K),
\end{equation}
then
\[
\dim_H\widehat X_{\phi,\psi}(z)=0
\qquad(z\in\overline{\mathbb Q}).
\]
In particular, this holds when both polynomials are monic.

\item\label{part:main-2-positive}
If \(\delta_v\ne0\) at a place \(v\), there exists \(R>0\),
depending only on \(\phi,\psi\) and \(v\), such that
\[
\dim_H\widehat X_{\phi,\psi}(z)>0
\]
for every \(z\in K\) satisfying \(|z|_v>R\) and
\[
\sup_{y\in\mathcal O(z)}|y|_{v'}<\infty
\qquad(v'\in M_K\setminus\{v\}),
\]
where \(\mathcal O(z)\) is the full composition orbit, including
\(z\).
\end{enumerate}
\end{maintheorem}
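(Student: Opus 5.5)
Both parts rest on the local decomposition of canonical heights for polynomials near infinity. For a polynomial \(f\) of degree \(d\) with leading coefficient \(a\), the Green function \(G_{f,v}(z)=\lim d^{-n}\log^+|f^n(z)|_v\) satisfies \(G_{f,v}(z)=\log|z|_v+\frac{\log|a|_v}{d-1}+o(1)\) as \(|z|_v\to\infty\). Moreover \(G_{f,v}=\log^+|z|_v\) at all but finitely many places. For a composition \(u=f_1\circ\cdots\circ f_n\) the same formula holds with the normalized constant \(c_u=\log|a_u|_v/(d_u-1)\). Here \(a_u\) is the product of leading coefficients raised to partial degrees, so \(c_u\) is a weighted average of \(c_\phi=\log|a_\phi|_v/(d_\phi-1)\) and \(c_\psi\). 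Indeed \(\log|a_u|_v=\sum_i (d_{f_{i+1}}\cdots d_{f_n})\log|a_{f_i}|_v\), up to the ordering convention, and \(d_u-1=\sum_i (d_{f_i}-1)d_{f_{i+1}}\cdots d_{f_n}\).

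For part (i), \(\delta_v=0\) means \(c_\phi=c_\psi=:c_v\) at every place, hence \(c_u=c_v\) for every \(u\). I would then show that all \(u\) share a common Böttcher-type coordinate near infinity. Conjugating by a scalar \(t\mapsto\gamma t\) normalizes both maps to be monic simultaneously, possibly after a finite extension: we need \(\gamma^{d_\phi-1}a_\phi=1\) and \(\gamma^{d_\psi-1}a_\psi=1\), which is consistent in absolute value at every place. Absolute values suffice for the height argument, so I would work directly with Green functions instead of an actual conjugation.

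The key claim is that \(\widehat h_u(z)\) takes only finitely many values up to an approximation that collapses. More concretely, \(\widehat h_u(z)=\frac1{[K:\mathbb Q]}\sum_v n_v G_{u,v}(z)\), and along an infinite sequence \(f_1f_2\cdots\) the limit height is \(\sum_v n_v\lim d_{u_n}^{-1}\log^+|u_n(z)|_v\). Since the constants agree, I expect the sequence height to be governed by the orbit escape data. Once the orbit leaves a large disk at \(v\), \(G_{u,v}(z)=d_{\text{prefix}}^{-1}\)-scaled quantities that become independent of the tail by the equal-constant relation \(G_{f,v}(w)=\log|w|_v+c_v+o(1)\) with \(o(1)\to0\) superexponentially. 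I would then bound the number of \(\epsilon\)-intervals covering the spectrum by \(\exp(o(\log 1/\epsilon))\). After \(m\) steps the prefix words contribute differences of size \(\le C\,d_{\min}^{-m}\rho^{d_{\min}^m}\), which is double-exponentially small, while there are only \(2^m\) prefixes. This gives box dimension \(0\), hence Hausdorff dimension \(0\). Bounded orbits at a place contribute zero local height, and the finitely many places where the orbit is unbounded but not yet escaping are handled by waiting finitely many steps.

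For part (ii), I would use the orbitwise gap lemma, Lemma~\ref{lem:orbitwise-gap}, as in Theorem~\ref{thm:main-1}. The boundedness hypothesis kills all local heights at \(v'\neq v\), so \(\widehat h_u(z)=\frac{n_v}{[K:\mathbb Q]}G_{u,v}(z)\). For \(|z|_v>R\) one has \(G_{u,v}(z)=\log|z|_v+c_u+\eta_u(z)\) with \(|\eta_u|\) small. The first generator choice then shifts the height by amounts separated by a definite multiple of \(|\delta_v|\), uniformly after rescaling by degrees. This yields the self-similar separation needed for a positive dimension lower bound. The main obstacle is part (i): making rigorous that equal constants force the double-exponential collapse. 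The delicate point is uniformity of the \(o(1)\) error in the Green function expansion over all compositions, which I would get from a uniform escape radius \(R_v\) valid for both maps.
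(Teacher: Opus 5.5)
\textbf{Your part (i) has a genuine gap.} Your double-exponential bound \(C\,d_{\min}^{-m}\rho^{d_{\min}^m}\) on cylinder diameters holds only for prefixes after which the orbit has already left the disk \(|y|_v\le R_v\) at every place where it will ever leave it.

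A sequence that escapes at a place does so after finitely many steps, but this number is not bounded uniformly over sequences. There can be prefixes \(u\) of every length at which the orbit is still inside the disk at some place, while some continuations of \(u\) escape later and others never do. Nothing in your argument bounds the number of such prefixes subexponentially. For these \(u\) the only available bound is \(\operatorname{diam}\widehat X^u(z)\le 2B/\deg u\), which is merely exponentially small. Continuations of \(u\) can indeed have heights differing by amounts of order \(1/\deg u\): one escaping at the next step, another never escaping.

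So ``waiting finitely many steps'' does not produce a cover of the whole spectrum by \(2^m\) intervals of double-exponentially small length. The claimed \(\exp(o(\log 1/\epsilon))\) covering bound, i.e.\ box dimension zero, does not follow. The uniform radius \(R_v\) you invoke controls the error after escape, not the time until escape.

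The repair is the one the paper uses: replace box dimension by countable stability of Hausdorff dimension.
First identify \(\widehat X_{\phi,\psi}(z)\) with the image of the continuous sequence-height map on the compact sequence space, using density of periodic sequences. This step matters, because a countable decomposition must cover the closure, not just the countably many finite-word heights.
Then split the sequences into countably many classes. Each class is indexed by the set of contributing places at which the orbit eventually escapes, together with a finite prefix \(u\) by which all these escapes have occurred. Within one class:
\begin{enumerate}
\item Lemma~\ref{lem:local-sequence-height}, with \(c_{\phi,v}=c_{\psi,v}\), makes each nonarchimedean contribution depend only on \(u\).
\item Nonescaping places contribute \(0\).
\item At escaping archimedean places, the growth \(|\theta(y)|_v\ge|y|_v^{3/2}\) puts the heights of class members sharing an \(n\)-letter extension of \(u\) into an interval of length \(A\,2^{-(3/2)^n}\). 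There are at most \(2^n\) such extensions, so each class has \(\mathcal H^s=0\) for every \(s>0\).
\end{enumerate}
Your double-exponential mechanism is therefore correct, but only class by class.

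Part (ii) is essentially the paper's argument. The uniform radius keeps the whole orbit in \(|y|_v>R\), boundedness kills the other places, and the one-map expansions give \(\Delta_*(z)\ge n_v|\delta_v|/(2[K:\mathbb Q])\). Lemma~\ref{lem:orbitwise-gap} then applies. Verify exactly this hypothesis rather than a first-letter separation, which can fail. For \(d_\phi=d_\psi=2\), the leading-order first-letter ranges of \(\sum_i 2^{-i}c_{\theta_i,v}\) touch at the midpoint, which is why the lemma passes to \(\phi^j,\psi^j\).
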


The boundedness hypothesis in part~\textup{(ii)} concerns the entire composition orbit at every other place. Both assertions are proved in Section~\ref{sec:applications}. Combining part~\textup{(ii)} with the degree upper bound gives intermediate dimensions for polynomial pairs satisfying $d_\phi^{-1}+d_\psi^{-1}<1$; see Corollary~\ref{cor:intermediate-dimension}.

Homogeneous local sequence heights provide the local--global comparison used in Theorem~\ref{thm:main-1}; see Young~\cite[Propositions~4.6 and 4.12]{YoungThesis2023}. In Section~\ref{subsec:rational-cases}, we construct cubic pairs that are not M\"obius-conjugate to polynomials and have completed spectra of dimension $\log2/\log3$, with positive finite Hausdorff measure in that dimension.

The dimension also has an arithmetic application. Orbit counts by height for semigroups were studied by Hindes~\cite{Hindes2022Monoid} and by Bell--Hindes--Zhong~\cite{BellHindesZhongApplications}. Their general upper bound for distinct orbit points can be compared with the following lower bound in terms of the height spectrum.

\begin{maintheorem}\label{thm:main-3}
Let \(K\) be a number field, let \(\phi,\psi\in K(t)\) have
 degrees \(d_\phi,d_\psi\ge2\), and let \(P\in\mathbb P^1(K)\).
Let \(s_0>0\) satisfy
\[
d_\phi^{-s_0}+d_\psi^{-s_0}=1,
\]
and put
\[
N_P(X)=\#\{Q\in\mathcal O(P):h(Q)\le X\}.
\]
Suppose that
\[
\mathcal H^{s_0}\bigl(\widehat X_{\phi,\psi}(P)\bigr)>0.
\]
\begin{enumerate}
\renewcommand{\theenumi}{\roman{enumi}}
\renewcommand{\labelenumi}{(\theenumi)}
\item\label{part:main-3-dependent}
If \(d_\phi,d_\psi\) are multiplicatively dependent, then
\[
cX^{s_0}\le N_P(X)\le CX^{s_0}
\]
for all sufficiently large \(X\).
\item\label{part:main-3-independent}
If \(d_\phi,d_\psi\) are multiplicatively independent, there
exists \(\eta>0\), depending only on the degrees, such that
\[
c\,\frac{X^{s_0}}{(\log X)^{1/2-\eta}}
\le N_P(X)\le CX^{s_0}
\]
for all sufficiently large \(X\).
\end{enumerate}
The positive constants \(c,C\) may depend on \(\phi,\psi,P\).
\end{maintheorem}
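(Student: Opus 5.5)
We prove Theorem~\ref{thm:main-3} by comparing Weil heights on the orbit with canonical heights along words, and then comparing the set of spectrum points $\widehat h_u(P)$ with a symbolic counting problem on words. For a word $u=\gamma_1\circ\cdots\circ\gamma_n$ write $D(u)=\prod d_{\gamma_i}$. The standard telescoping estimate (Call--Silverman, Kawaguchi) gives $|h(u(P))-D(u)\,\widehat h_u(P)|\le C_0$, with $C_0$ uniform in $u$, since there are only two generators. Here $\widehat h_u(P)$ should be read as the sequence height, i.e.\ the limit attached to $u$ followed by the tail. The completed spectrum is the image of the continuous sequence-height map $\Sigma=\{\phi,\psi\}^{\mathbb N}\to\mathbb R$. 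This map is Lipschitz for the metric in which the cylinder $[u]$ has diameter $D(u)^{-1}$. Indeed, the sequence height of $\gamma\omega$ equals $d_\gamma^{-1}$ times the height of $\omega$ at $\gamma(P)$, up to $O(1)$ normalization. So cylinder images satisfy $\operatorname{diam}\le C_1 D(u)^{-1}$. We plan to use this structure for both bounds.

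\emph{Upper bound.} The points $Q=u(P)$ with $h(Q)\le X$ satisfy $D(u)\widehat h_u(P)\le X+C_0$. If $\widehat X$ contains no point near $0$, so that heights along the orbit are bounded below by some $c_0>0$ (which follows from positivity of $\mathcal H^{s_0}$ and the nonpreperiodic situation), then $D(u)\le (X+C_0)/c_0$. The number of words with $D(u)\le T$ is $\asymp T^{s_0}$ by the renewal equation $F(T)=1+F(T/d_\phi)+F(T/d_\psi)$. This holds with an exact asymptotic $cT^{s_0}$ in the lattice (dependent) case and $\sim cT^{s_0}$ by the renewal theorem in the nonlattice case. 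Hence $N_P(X)\le CX^{s_0}$ in both cases. Distinct points can only reduce the count. If there are small heights, we handle the words passing through the finitely many low-height points separately.

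\emph{Lower bound.} Distinctness is the issue, and this is where positive $\mathcal H^{s_0}$ enters. Fix a scale and consider the stopping set $W_T=\{u: D(u)\le T< D(u)d_{\max}\}$, which is a cover of $\Sigma$ by cylinders of size $\asymp T^{-1}$. Suppose two words give the same point $Q$. Then their heights $D(u)\widehat h_u(P)$ agree up to $2C_0$. Mass distribution gives the following. Since $\mathcal H^{s_0}(\widehat X)>0$, Frostman's lemma provides a measure $\mu$ on $\widehat X$ with $\mu(I)\le |I|^{s_0}$. Pull $\mu$ back along the covering intervals $I_u=$ image of $[u]$. Then a positive proportion, in $\mu$-mass, of the intervals $I_u$ for $u\in W_T$ must carry mass $\gg T^{-s_0}$. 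Each such $I_u$ has length $\ll T^{-1}$, so only $O(1)$ of them can overlap any given interval of length $T^{-1}$. This gives $\gg T^{s_0}$ words $u\in W_T$ whose values $\widehat h_u(P)$ are $T^{-1}$-separated modulo bounded multiplicity. It remains to convert this into distinct points $u(P)$ with $h\le X$. In the dependent case, $d_\phi=d^a$ and $d_\psi=d^b$, so $D(u)$ takes values in $d^{\mathbb N}$. We group words by the exact value $D(u)=D$. Words with equal $D$ and $T^{-1}$-separated values of $\widehat h_u(P)$ have heights $D\widehat h_u$ separated by more than $2C_0$ once $T^{-1}D\gg C_0$. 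After refining the scale by a bounded factor, they therefore give distinct points. Summing over $D\le X/c$ yields $cX^{s_0}$.

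\emph{Independent case (main obstacle).} Now $\log D(u)$ ranges over the dense set $a\log d_\phi+b\log d_\psi$, so words in $W_T$ have $D(u)$ spread through $[T/d_{\max},T]$. Two words with separated spectral values can still produce nearby heights $D(u)\widehat h_u$. The plan is to restrict the covering to words whose $D(u)$ lies in a short window $[D_0,D_0(1+\epsilon_T)]$. Within such a window, $T^{-1}$-separation of $\widehat h_u$ again forces $C_0$-separation of the heights. We then need the window to retain enough mass. The number of pairs $(a,b)$ with $a\log d_\phi+b\log d_\psi$ in a window of width $\epsilon$ near $\log T$ is governed by the equidistribution of $n\log d_\phi/\log d_\psi$ mod $1$. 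A Diophantine estimate, the Baker bound $|a\log d_\phi-b\log d_\psi|\gg \max(a,b)^{-\kappa}$, gives a discrepancy bound for this rotation. The multinomial mass of $\{a+b=n\}$ concentrates on a window of width $\sqrt{n}\asymp\sqrt{\log T}$ in $a$. We choose $\epsilon_T$ as a power of $\log T$ matching that discrepancy. The retained proportion is then $\gg (\log T)^{-1/2+\eta}$, where the $\eta>0$ comes from the Diophantine exponent beating the trivial local-limit loss. This step is the delicate one: one must check that the Frostman mass restricted to these windows is still comparable to its counting proportion, using the bounded distortion of the self-similar structure. Given that, summing as before yields $N_P(X)\ge cX^{s_0}(\log X)^{-(1/2-\eta)}$.
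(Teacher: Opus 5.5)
\textbf{There is a genuine gap in part (ii).}

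\emph{The windowing step fails.} Your key step is that inside a window $D(u)\in[D_0,D_0(1+\epsilon_T)]$ a $T^{-1}$-separation of spectral values forces the heights to be separated. This fails unless $\epsilon_T=O(T^{-1})$. If $u(P)=u'(P)=Q$ with $h(Q)=H$ and $D(u')=(1+\epsilon)D(u)$, the centers $H/D(u)$ and $H/D(u')$ differ by $\asymp\epsilon\gg T^{-1}$. So a single orbit point can occupy well-separated spectral positions at several degrees of the same window.

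Conversely, a window containing only $O(1)$ degrees retains at most $O((\log T)^{-1/2})$ of the covering sum. Under independence a degree determines $(k,\ell)$. The words of $W_T$ with $k$ letters $\phi$ and $\ell$ letters $\psi$ have total weight at most $\binom{k+\ell}{k}p^k(1-p)^\ell\ll(\log T)^{-1/2}$, where $p=d_\phi^{-s_0}$. So any windowing scheme of this kind is capped at $X^{s_0}/(\log X)^{1/2}$. Equidistribution of the degrees alone cannot produce $\eta>0$. The appeal to ``bounded distortion of the self-similar structure'' does not help either: there is no self-similarity here, since the cylinder $\widehat X^u(P)$ is $(\deg u)^{-1}$ times the spectrum at the different point $u(P)$.

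\emph{The missing ingredient.} You need a bound, uniform in the height $H$, on how many degrees $D=d_\phi^kd_\psi^\ell$ can make $[(H-B)/D,(H+B)/D]$ meet $E=\widehat X_{\phi,\psi}(P)\cap[a_0,b_0]$. Here $k$ ranges over the $\sqrt{\log T}$ window you correctly identify. The paper argues as follows.
\begin{itemize}
\item Write $\log(H/D)/\log d_\psi=\log H/\log d_\psi-k\alpha-\ell$. The condition then puts $\{\log H/\log d_\psi-k\alpha\}$ in the $O(T^{-1})$-neighbourhood of $F=\{\log t/\log d_\psi\bmod 1:t\in E\}$.
\item The spectrum is covered by $O(\delta^{-s_0})$ intervals of length $\delta$, and $s_0<1$. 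Hence this neighbourhood is covered by arcs of total length $O(\delta^{1-s_0})$.
\item The Erd\H{o}s--Tur\'an inequality, with the Diophantine exponent $\tau$ from Matveev's bound, gives discrepancy $O(M^{\tau/(\tau+1)})$ for any $M\asymp\sqrt{\log T}$ consecutive $k$.
\item Taking $\delta=M^{-1/(\tau+1)}$ leaves $O(M^{1-(1-s_0)/(\tau+1)})=O((\log T)^{1/2-\eta})$ admissible $k$.
\end{itemize}
Summing this multiplicity over the distinct orbit heights up to $C_0T$, against the covering lower bound, gives (ii) with $\eta=(1-s_0)/(2(\tau+1))$. The saving comes from the thinness of the spectrum interacting with the rotation, which your plan never uses.

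\textbf{Part (i)} is essentially the paper's argument: group by exact degree, note there are $O(1)$ degrees in the stopping window, and observe that distinct intervals of equal degree have distinct centers $h(u(P))/D$. Your claim that only $O(1)$ intervals $I_u$ meet a given interval of length $T^{-1}$ is false, but it is also unnecessary.

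\textbf{Upper bound.} Your elementary route differs from the paper, which cites Bell--Hindes--Zhong, and it works once made precise. Positivity of $\mathcal H^{s_0}$ does not keep $\widehat X_{\phi,\psi}(P)$ away from $0$. Instead, cut each word at its last orbit point of height $<B+1$; by Northcott there are finitely many such points. After one more letter, the remaining subword $u''$ satisfies $D(u'')\le h(Q)\le X$.
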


For arbitrary positive spectrum dimension \(\delta\),
Proposition~\ref{prop:orbit-counting-lower} gives a lower
orbit-counting exponent of at least \(\delta\). Theorem~\ref{thm:main-3}
uses positive Hausdorff measure at the degree exponent \(s_0\)
to give sharper bounds. In the independent-degree case, the proof
restricts the covering intervals to a short range of letter counts
and bounds, uniformly in the height, the number of remaining
degrees through an irrational rotation.

No freeness or injectivity assumption on word evaluations is required in this theorem. Section~\ref{sec:orbit-counting} gives the proof and matching counting bounds for the cubic family. Section~\ref{sec:discussion} discusses quadratic examples, dependence on the starting point, and further applications and questions.

\section{Preliminaries}\label{sec:preliminaries}

\subsection{Heights}
We first recall the height, dynamical canonical height, and the estimates used below. Details can be found in Silverman's textbook~\cite{Silverman2007}.

Let \(K\) be a number field and \(M_K\) its set of places. Normalize \(|\cdot|_v\) to extend the standard absolute values on \(\mathbb Q\): thus \(|p|_v=p^{-1}\) for \(v\mid p\), while archimedean places use the usual absolute value or complex modulus. Put
\[
n_v=[K_v:\mathbb Q_{v_0}],
\]
where \(v_0\) is the place of \(\mathbb Q\) below \(v\).

For \(P=[a:b]\in\mathbb P^1(\overline{\mathbb Q})\), choose such a field \(K\) containing \(a,b\). The \emph{absolute logarithmic Weil height} of \(P\) is
\[
h(P)=\frac{1}{[K:\mathbb Q]}
\sum_{v\in M_K}
n_v\log\max\{|a|_v,|b|_v\}.
\]
The product formula and the local-degree identity imply that this definition is independent of the homogeneous coordinates and the choice of \(K\).
For polynomial maps we use affine coordinates, with \(h(z)=h([z:1])\); for general rational maps we use projective coordinates.

For \(\phi\in\overline{\mathbb Q}(t)\) of degree \(d\ge2\), its dynamical canonical height is
\[
\widehat h_\phi(P)=\lim_{n\to\infty}\frac{h(\phi^n(P))}{d^n},
\qquad P\in\mathbb P^1(\overline{\mathbb Q}).
\]
We will use the following estimate. There exists a constant $C_\phi$, depending only on $\phi$, such that
\begin{equation}\label{eq:classical-bound}
\left|\widehat h_\phi(P)-\frac{h(\phi^n(P))}{d^n}\right|
\le\frac{C_\phi}{(d-1)d^n}
\end{equation}
for all $n\ge 0$.

\Needspace{7\baselineskip}
\subsection{Height spectra and cylinders}

\Needspace{6\baselineskip}
\begin{definition}\label{def:height-spectrum}
    Let $\phi,\psi\in\overline{\mathbb Q}(t)$ have degree at least $2$.  Write
    \[
        \langle\phi,\psi\rangle
        =
        \left\{\theta_n\circ\cdots\circ\theta_1 : n\ge 1,\ \theta_i\in\{\phi,\psi\}\right\}\cup\{\operatorname{id}\}
    \]
    for the semigroup generated by $\phi$ and $\psi$ under composition. For a composition $u=\theta_n\circ\cdots\circ\theta_1\in\langle\phi,\psi\rangle$, define its length by $|u|\coloneqq n$, with $|\operatorname{id}|=0$. Since different words may induce the same map, $|u|$ is a property of the chosen composition, not of the induced map itself.

    For $P\in\mathbb P^1(\overline{\mathbb Q})$, define the \emph{height spectrum} of $P$ by
    \[
        X_{\phi,\psi}(P)\coloneqq\{\widehat h_u(P): u\in\langle\phi,\psi\rangle,\ u\ne\operatorname{id}\},
    \]
    and the \emph{completed height spectrum} by
    \[
        \widehat X_{\phi,\psi}(P)\coloneqq\overline{X_{\phi,\psi}(P)}\subseteq\mathbb R,
    \]
    where the closure is taken in the usual Euclidean topology on $\mathbb R$.  When $\phi$ and $\psi$ are clear from context, we write $X(P)$ and $\widehat X(P)$.
\end{definition}
\Needspace{10\baselineskip}
The \emph{sequence height} defined below (see \cite{Kawaguchi2007}) is obtained as a limit of dynamical canonical heights associated to finite prefixes of $\omega$. In particular, it belongs to the completed spectrum of Definition~\ref{def:height-spectrum}.

\begin{definition}\label{def:sequence-height}
    Let $\omega=(\theta_1,\theta_2,\ldots)\in\{\phi,\psi\}^{\mathbb N}$. Define
    \[
    \widehat h_\omega(P)\coloneqq
    \lim_{n\to\infty}
    \frac{h\bigl((\theta_n\circ\cdots\circ\theta_1)(P)\bigr)}{\prod_{i=1}^{n} d_{\theta_i}}.
    \]
    Here $\theta_1$ is applied first. The existence of this limit follows from the telescoping estimate in Lemma~\ref{lem:sequence-bound} below.

    For $u=u_n\circ\cdots\circ u_1\in\langle\phi,\psi\rangle$, define the concatenated sequence
    \[
    \omega\circ u \coloneqq (u_1,\ldots,u_n,\theta_1,\theta_2,\ldots).
    \]
\end{definition}
We shall prove an inequality analogous to Equation~\eqref{eq:classical-bound}. From now on we fix \(\phi,\psi\in\overline{\mathbb Q}(t)\) with \(d_\phi,d_\psi\ge2\), and fix constants \(C_\phi,C_\psi\ge0\) such that
\[
|h(\theta(Q)) - d_\theta\, h(Q)|\le C_\theta
\qquad
\bigl(\theta\in\{\phi,\psi\},\ Q\in\mathbb P^1(\overline{\mathbb Q})\bigr).
\]
Throughout this section, set
\[
B\coloneqq \max\left\{\frac{C_\phi}{d_\phi-1},\ \frac{C_\psi}{d_\psi-1}\right\}.
\]

\begin{lemma}\label{lem:sequence-bound}
    Let $\omega=(\theta_1,\theta_2,\ldots)\in\{\phi,\psi\}^{\mathbb N}$, and write $\omega_n\coloneqq \theta_n\circ\cdots\circ\theta_1$, with $\omega_0\coloneqq \operatorname{id}$ (so $\deg(\omega_0)=1$). Then for any $P\in\mathbb P^1(\overline{\mathbb Q})$ and any $n\ge 0$,
    \begin{equation}\label{eq:sequence-bound}
    \left|\widehat h_\omega(P)-\frac{h(\omega_n(P))}{\deg \omega_n}\right|\le \frac{B}{\deg\omega_n}.
    \end{equation}
\end{lemma}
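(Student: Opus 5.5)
The plan is the standard Tate telescoping argument, adapted to nonconstant degrees. Write \(D_n\coloneqq\deg\omega_n=\prod_{i=1}^n d_{\theta_i}\), with \(D_0=1\), and set \(a_n\coloneqq h(\omega_n(P))/D_n\). First I will show that \((a_n)\) is Cauchy; this proves the limit \(\widehat h_\omega(P)\) in Definition~\ref{def:sequence-height} exists. The tail bound will then give \eqref{eq:sequence-bound}.

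For a single step, apply the defining inequality for \(C_{\theta_{k+1}}\) with \(Q=\omega_k(P)\):
\[
\bigl|h(\omega_{k+1}(P))-d_{\theta_{k+1}}h(\omega_k(P))\bigr|\le C_{\theta_{k+1}}.
\]
Dividing by \(D_{k+1}=d_{\theta_{k+1}}D_k\) gives
\[
|a_{k+1}-a_k|\le \frac{C_{\theta_{k+1}}}{D_{k+1}}.
\]
By the definition of \(B\) we have \(C_{\theta_{k+1}}\le B(d_{\theta_{k+1}}-1)\). Hence
\[
|a_{k+1}-a_k|\le B\,\frac{d_{\theta_{k+1}}-1}{D_{k+1}}=B\left(\frac1{D_k}-\frac1{D_{k+1}}\right).
\]

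The key point is that the right-hand side telescopes exactly, even though the degrees vary along \(\omega\). Summing from \(k=n\) to \(m-1\) gives
\[
|a_m-a_n|\le B\left(\frac1{D_n}-\frac1{D_m}\right)\le \frac{B}{D_n}.
\]
Since \(D_n\ge 2^n\to\infty\), the sequence \((a_n)\) is Cauchy, so the limit exists. Letting \(m\to\infty\) in the display yields \(|\widehat h_\omega(P)-a_n|\le B/D_n\), which is \eqref{eq:sequence-bound}. The case \(n=0\) is included, since \(D_0=1\).

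I do not expect a genuine obstacle. The only step needing care is to bound \(C_\theta\) by \(B(d_\theta-1)\) rather than by a cruder \(B\). With the cruder bound, the geometric sum gives \(B/D_n\) only after an extra factor of \(1/(\min d-1)\). The chosen normalization of \(B\) is exactly what makes the sum telescope with constant \(1\).
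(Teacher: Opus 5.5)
Your proposal is correct and follows the paper's proof essentially step for step: you bound each increment by $C_{\theta_{k+1}}/D_{k+1}\le B\bigl(1/D_k-1/D_{k+1}\bigr)$, telescope, and pass to the limit. The only difference is that you spell out the Cauchy argument showing the limit exists, which the paper leaves implicit in ``passing to the limit.''
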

\begin{proof}
    For $k\ge 0$, we have
    \[
    \begin{aligned}
        \left|
        \frac{h(\omega_k(P))}{\deg\omega_k}
        -\frac{h(\omega_{k+1}(P))}{\deg\omega_{k+1}}
        \right|
        &\le \frac{C_{\theta_{k+1}}}{\deg\omega_{k+1}}\\
        &\le \frac{B(d_{\theta_{k+1}}-1)}{\deg\omega_{k+1}}\\
        &=B\left(\frac1{\deg\omega_k}-\frac1{\deg\omega_{k+1}}\right).
    \end{aligned}
    \]
    By telescoping and passing to the limit, we obtain
    \[
    \left|\widehat h_\omega(P)-\frac{h(\omega_n(P))}{\deg \omega_n}\right|
    \le \sum_{k=n}^\infty \left|\frac{h(\omega_k(P))}{\deg\omega_k}-\frac{h(\omega_{k+1}(P))}{\deg\omega_{k+1}}\right|
    \le B\sum_{k=n}^\infty\left(\frac{1}{\deg \omega_k}-\frac{1}{\deg\omega_{k+1}}\right),
    \]
    which yields the desired result.
\end{proof}
The next estimate plays a key role in the elementary ping-pong argument for the dynamical Tits alternative; see \cite[Proposition~3.1 and Lemma~3.2]{BHPT2024}.

\Needspace{6\baselineskip}
\begin{lemma}\label{lem:height-ping-pong}
For any \(\omega,\omega'\in\{\phi,\psi\}^{\mathbb N}\), any $u\in\langle\phi,\psi\rangle$, and any \(P\in\mathbb P^1(\overline{\mathbb Q})\),
\[
\left|\widehat h_{\omega\circ u}(P)-\widehat h_{\omega'\circ u}(P)\right|
\le \frac{2B}{\deg u}.
\]

As a consequence, if we set
\[
\Delta(P)\coloneqq \left|\widehat h_\phi(P)-\widehat h_\psi(P)\right|,
\]
then for all sequences \(\omega,\omega'\) and all \(m,n\ge 1\) we obtain
\[
\left|\widehat h_{\omega\circ\phi^m}(P)-\widehat h_{\omega'\circ\psi^n}(P)\right|
\ge \Delta(P)-2B\bigl(d_\phi^{-m}+d_\psi^{-n}\bigr).
\]
In particular, these two sequence heights differ whenever
\[
2B\bigl(d_\phi^{-m}+d_\psi^{-n}\bigr)<\Delta(P).
\]
\end{lemma}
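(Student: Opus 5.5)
The plan is to apply Lemma~\ref{lem:sequence-bound} twice, once to each of the two concatenated sequences, and then use the triangle inequality.

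\textbf{First assertion.} Write \(u=u_n\circ\cdots\circ u_1\). The sequence \(\omega\circ u\) is \((u_1,\ldots,u_n,\theta_1,\ldots)\), so its \(n\)-th prefix composition is exactly \(u\). Lemma~\ref{lem:sequence-bound}, applied to \(\omega\circ u\) at index \(n\), gives
\[
\left|\widehat h_{\omega\circ u}(P)-\frac{h(u(P))}{\deg u}\right|\le\frac{B}{\deg u}.
\]
The same bound holds for \(\omega'\circ u\), since its \(n\)-th prefix is also \(u\) and the comparison term \(h(u(P))/\deg u\) is identical. The triangle inequality then yields the bound \(2B/\deg u\). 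The case \(u=\operatorname{id}\) is covered as well, since \(\deg u=1\) and \(n=0\) is allowed in the lemma.

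\textbf{Second assertion.} Here we compare the two sequence heights with the single-map canonical heights. Take the constant sequences \(\phi^\infty=(\phi,\phi,\ldots)\) and \(\psi^\infty\). Directly from the definitions, \(\widehat h_{\phi^\infty}(P)=\widehat h_\phi(P)\) and \(\widehat h_{\psi^\infty}(P)=\widehat h_\psi(P)\). Moreover \(\phi^\infty=\phi^\infty\circ\phi^m\), so the first assertion with \(u=\phi^m\) gives
\[
\bigl|\widehat h_{\omega\circ\phi^m}(P)-\widehat h_\phi(P)\bigr|\le 2B\,d_\phi^{-m}.
\]

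A sharper route is available and should be used to get exactly the constant stated. Compare each height with \(h(\phi^m(P))/d_\phi^m\) via Lemma~\ref{lem:sequence-bound}: both \(\widehat h_{\omega\circ\phi^m}(P)\) and \(\widehat h_{\phi^\infty}(P)\) lie within \(B d_\phi^{-m}\) of that quantity. This recovers the same \(2Bd_\phi^{-m}\), so the two routes agree.

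Similarly,
\[
\bigl|\widehat h_{\omega'\circ\psi^n}(P)-\widehat h_\psi(P)\bigr|\le 2Bd_\psi^{-n}.
\]
By the reverse triangle inequality,
\[
\bigl|\widehat h_{\omega\circ\phi^m}-\widehat h_{\omega'\circ\psi^n}\bigr|\ge\Delta(P)-2B\bigl(d_\phi^{-m}+d_\psi^{-n}\bigr),
\]
and the final claim follows immediately.

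\textbf{Main obstacle.} The only point needing care is the identification \(\widehat h_{\phi^\infty}=\widehat h_\phi\). This holds because the two defining limits are literally the same sequence \(h(\phi^n(P))/d_\phi^n\). One must also check that the prefix indexing in \(\omega\circ u\) matches the convention that \(u_1\) is applied first, which it does by Definition~\ref{def:sequence-height}.
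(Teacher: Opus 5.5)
Your proof is correct and follows the paper's argument. The first bound comes from applying Lemma~\ref{lem:sequence-bound} to both concatenated sequences at index $|u|$ and using the triangle inequality. The second comes from comparing each height with the constant-sequence heights $\widehat h_\phi,\widehat h_\psi$ (via the first bound with $u=\phi^m$ and $u=\psi^n$) and applying the reverse triangle inequality, exactly as the paper does. Your paragraph on a ``sharper route'' is redundant: it is the same computation and gives the same constant $2Bd_\phi^{-m}$.
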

\begin{proof}
The first inequality follows from the triangle inequality and Lemma~\ref{lem:sequence-bound}. For the second inequality, we have
\begin{align*}
\left|\widehat h_{\omega\circ\phi^m}(P)-\widehat h_{\omega'\circ\psi^n}(P)\right|
&=\left|\widehat h_{\omega\circ\phi^m}(P)-\widehat h_{\phi}(P)+\widehat h_{\phi}(P)-\widehat h_{\psi}(P)+\widehat h_{\psi}(P)-\widehat h_{\omega'\circ\psi^n}(P)\right|\\
&\ge \left|\widehat h_{\phi}(P)-\widehat h_{\psi}(P)\right| 
- \left|\widehat h_{\omega\circ\phi^m}(P)-\widehat h_{\phi}(P)+\widehat h_{\psi}(P)-\widehat h_{\omega'\circ\psi^n}(P)\right|\\
&\ge \left|\widehat h_{\phi}(P)-\widehat h_{\psi}(P)\right| 
- \left|\widehat h_{\omega\circ\phi^m}(P)-\widehat h_{\phi}(P)\right| 
- \left|\widehat h_{\psi}(P)-\widehat h_{\omega'\circ\psi^n}(P)\right|\\
&\ge \Delta(P)-\left(\frac{2B}{d_\phi^m}+\frac{2B}{d_\psi^n}\right).
\end{align*}
\end{proof}

\begin{definition}\label{def:height-cylinders}
Let \(\phi,\psi\in\overline{\mathbb Q}(t)\) have degrees at least \(2\), and let \(P\in\mathbb P^1(\overline{\mathbb Q})\). For $u\in\langle\phi,\psi\rangle$, the \emph{height cylinder with prefix \(u\)} is
\[
X_{\phi,\psi}^{u}(P)
\coloneqq
\left\{\widehat h_{u'\circ u}(P) : u'\in\langle\phi,\psi\rangle,\ u'\circ u\ne\operatorname{id}\right\}.
\]

Its \emph{completion} is
\[
\widehat X_{\phi,\psi}^{u}(P)
\coloneqq \overline{X_{\phi,\psi}^{u}(P)},
\]
where the closure is taken in \(\mathbb R\) with its usual metric. When the maps are fixed, write \(X^u(P)\) and \(\widehat X^u(P)\).
\end{definition}

\subsection{Local sequence height}\label{subsec:local-sequence-height}
Let $K$ be a number field over which $\phi$ and $\psi$ are defined. A \emph{homogeneous lift} of a rational map $\theta\in K(t)$ of degree $d_\theta\ge2$ is a morphism $\widetilde\theta:\mathbb A^2\to\mathbb A^2$ whose coordinate polynomials lie in $K[X,Y]$, are homogeneous of degree $d_\theta$, have no common zero outside $(0,0)$, and represent $\theta$. Throughout the paper, fix homogeneous lifts $\widetilde\phi,\widetilde\psi$ over $K$ and use these same lifts at every place.

For a finite word $u=\theta_n\circ\cdots\circ\theta_1$, we use the composed lift
\[
\widetilde u=\widetilde\theta_n\circ\cdots\circ\widetilde\theta_1,
\]
without further normalization.

We now define local sequence heights, including local dynamical canonical heights as the periodic case; see \cite[Proposition~4.6]{YoungThesis2023} for the construction.
\begin{definition}\label{def:local-sequence-height}
For $\omega=(\theta_1,\theta_2,\ldots)\in\{\phi,\psi\}^{\mathbb N}$ and a place $v\in M_K$, write
\[
\|(a,b)\|_v=\max\{|a|_v,|b|_v\}.
\]
The \emph{local sequence height} associated to the fixed lifts is
\[
\widehat h_{\omega,v}(\widetilde P)
=
\lim_{n\to\infty}
\frac{\log\|\widetilde{\omega_n}(\widetilde P)\|_v}{\deg\omega_n},
\qquad \widetilde P\in K_v^2\setminus\{(0,0)\}.
\]
The limit exists by a uniform telescoping estimate for the two fixed lifts.
If $\omega$ is obtained by repeating a nonempty finite block $(\theta_1,\ldots,\theta_n)$, put $u=\theta_n\circ\cdots\circ\theta_1$ and write
\[
\widehat h_{u,v}\coloneqq\widehat h_{\omega,v}.
\]
This is the local dynamical canonical height associated to the induced lift $\widetilde u$. The local notation refers to the chosen word and its induced lift.
\end{definition}

The choice of lifts matters locally. For any $c\in K_v^\times$, we have
\begin{equation}\label{eq:scaling-identity}
\widehat h_{\omega,v}(c\widetilde P)
=\widehat h_{\omega,v}(\widetilde P)+\log|c|_v.
\end{equation}

For $P=[a:b]\in\mathbb P^1(K)$, the local--global decomposition is
\begin{equation}\label{eq:global-local-decomposition}
\widehat h_\omega(P)
=
\frac1{[K:\mathbb Q]}
\sum_{v\in M_K}n_v\widehat h_{\omega,v}(a,b).
\end{equation}
Only finitely many places contribute, uniformly in the sequence: outside a fixed finite set, the lifts have integral coefficients and unit resultants, and $\|(a,b)\|_v=1$. Applying the ordinary-height identity to each truncation and passing to the limit in this finite sum gives the decomposition. The product formula makes the sum independent of the homogeneous coordinates and the chosen global lifts; see also \cite[Proposition~4.12]{YoungThesis2023}.

For polynomial maps, choose
\[
\widetilde\theta(X,Y)
=
\bigl(Y^{d_\theta}\theta(X/Y),\,Y^{d_\theta}\bigr).
\]
Then
\[
\widetilde{\omega_n}(z,1)=(\omega_n(z),1),
\]
so, writing $\widehat h_{\omega,v}(z)\coloneqq\widehat h_{\omega,v}(z,1)$, we recover
\[
\widehat h_{\omega,v}(z)
=
\lim_{n\to\infty}
\frac{\log\max\{1,|\omega_n(z)|_v\}}{\deg\omega_n}.
\]
\subsection{Hausdorff dimension}
Let \((X,\rho)\) be a metric space and \(E\subseteq X\). For a nonempty subset \(U\subseteq X\), write
\[
\operatorname{diam} U
=\sup\{\rho(x,y):x,y\in U\}.
\]
For \(s>0\) and \(\delta>0\), define
\[
\mathcal H^s_\delta(E)
=\inf\left\{
\sum_i(\operatorname{diam} U_i)^s:
E\subseteq\bigcup_i U_i,\quad
\operatorname{diam} U_i\le\delta
\right\},
\]
where the infimum is taken over all finite or countable covers of \(E\) by nonempty subsets of \(X\).

The \emph{\(s\)-dimensional Hausdorff outer measure} of \(E\) is
\[
\mathcal H^s(E)
=\lim_{\delta\to0^+}\mathcal H^s_\delta(E).
\]
The limit exists in \([0,\infty]\), since \(\mathcal H^s_\delta(E)\) is nondecreasing as \(\delta\) decreases. The \emph{Hausdorff dimension} of \(E\) is
\[
\dim_H E=\inf\{s>0:\mathcal H^s(E)=0\},
\]
with the convention \(\inf\varnothing=\infty\).

\Needspace{6\baselineskip}
\begin{lemma}\label{lem:cylinder-containment}
For every \(P\in\mathbb P^1(\overline{\mathbb Q})\) and every $u\in\langle\phi,\psi\rangle$, we have
\[
\widehat X_{\phi,\psi}^{u}(P)\subseteq I_u(P)
\coloneqq
\left[
\frac{h(u(P))-B}{\deg u},
\frac{h(u(P))+B}{\deg u}
\right].
\]
In particular,
\[
\operatorname{diam}\widehat X_{\phi,\psi}^{u}(P)
\le \frac{2B}{\deg u}.
\]
\end{lemma}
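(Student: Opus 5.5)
The plan is to show that every element of the uncompleted cylinder \(X^u_{\phi,\psi}(P)\) lies in the closed interval \(I_u(P)\), and then pass to the closure. Because \(I_u(P)\) is closed, the completion \(\widehat X^u_{\phi,\psi}(P)\) is then contained in it as well. The diameter bound follows at once, since \(I_u(P)\) has length \(2B/\deg u\).

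Fix \(u'\in\langle\phi,\psi\rangle\) with \(u'\circ u\ne\operatorname{id}\), and put \(w=u'\circ u\). I would identify \(\widehat h_w(P)\) with a sequence height. Let \(\omega_w\) be the periodic sequence obtained by repeating the letters of \(w\) in application order, so that its \(k|w|\)-th truncation is \(w^k\). Then
\[
\frac{h(w^k(P))}{(\deg w)^k}
\]
is exactly the corresponding subsequence of the sequence defining \(\widehat h_{\omega_w}(P)\). This gives \(\widehat h_w(P)=\widehat h_{\omega_w}(P)\), where the left side is the ordinary canonical height of the map \(w\) of degree \(\deg w\ge2\).

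Next, the sequence \(\omega_w\) begins with the letters of \(u\), so its truncation at length \(|u|\) is \(u\) itself. Lemma~\ref{lem:sequence-bound} with \(n=|u|\) then gives
\[
\left|\widehat h_w(P)-\frac{h(u(P))}{\deg u}\right|\le\frac{B}{\deg u},
\]
which says precisely that \(\widehat h_w(P)\in I_u(P)\). When \(u=\operatorname{id}\) we have \(\deg u=1\), and the same estimate with \(n=0\) applies.

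The only real point to check is the identification of the canonical height of a word with the sequence height of its periodic extension. This uses two facts: the limit defining \(\widehat h_{\omega_w}\) exists, by Lemma~\ref{lem:sequence-bound}, and passing to a subsequence preserves that limit. The rest is routine.
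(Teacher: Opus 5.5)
Your proof is correct and is essentially the paper's argument. The paper first bounds \(\widehat h_{u'\circ u}(P)\) against \(h(u'\circ u(P))/\deg(u'\circ u)\) via \eqref{eq:sequence-bound}, then adds the finite telescoping estimate from \(u\) to \(u'\circ u\); you instead apply Lemma~\ref{lem:sequence-bound} once at \(n=|u|\) to the periodic extension of \(w\), which combines both steps, and your explicit identification \(\widehat h_w(P)=\widehat h_{\omega_w}(P)\) is what the paper's first step uses without spelling it out.
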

\begin{proof}
By Lemma~\ref{lem:sequence-bound} and Equation~\eqref{eq:classical-bound}, the same constant \(B\) applies to both bounds below.

Take \(\xi\in X_{\phi,\psi}^{u}(P)\), represented by
\[
\xi=\widehat h_{u'\circ u}(P),\qquad
\text{with }u'\in\langle \phi,\psi\rangle.
\]
Equation~\eqref{eq:sequence-bound} gives
\[
\left|\xi-\frac{h(u'\circ u(P))}{\deg(u'\circ u)}\right|\le\frac {B}{\deg(u'\circ u)}.
\]
The finite telescoping estimate in the lemma gives
\[
\left|
\frac{h(u'\circ u(P))}{\deg(u'\circ u)}-\frac{h(u(P))}{\deg u}
\right|
\le B\left(\frac1{\deg u}-\frac1{\deg(u'\circ u)}\right).
\]
Adding these inequalities, we obtain
\[
\left|\xi-\frac{h(u(P))}{\deg u}\right|
\le\frac B{\deg (u'\circ u)}+B\left(\frac1{\deg u}-\frac1{\deg(u'\circ u)}\right)
=\frac B{\deg(u)}.
\]
Thus \(X^{u}(P)\subseteq I_u(P)\). Since \(I_u(P)\) is closed, the same holds for \(\widehat X^{u}(P)\).
\end{proof}

\Needspace{6\baselineskip}
\begin{proposition}\label{prop:hausdorff-upper-bound}
Let \(\phi,\psi\in\overline{\mathbb Q}(t)\) have degrees \(d_\phi,d_\psi\ge2\), and let \(s_0>0\) be the unique solution of
\[
d_\phi^{-s_0}+d_\psi^{-s_0}=1.
\]
Then, for every \(P\in\mathbb P^1(\overline{\mathbb Q})\),
\[
\dim_H\widehat X_{\phi,\psi}(P)\le s_0.
\]
In particular, if \(d_\phi=d_\psi=d\), then
\[
\dim_H\widehat X_{\phi,\psi}(P)\le\frac{\log2}{\log d}.
\]
\end{proposition}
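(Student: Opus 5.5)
The plan is to cover \(\widehat X_{\phi,\psi}(P)\) by the height-cylinder intervals \(I_u(P)\) of Lemma~\ref{lem:cylinder-containment}, chosen along a stopping-time family of words. This should give \(\mathcal H^{s}(\widehat X(P))<\infty\) for \(s=s_0\), and hence the dimension bound.

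First I need a covering statement. Fix \(\varepsilon>0\) and let \(W_\varepsilon\) be the set of words \(u=\theta_n\circ\cdots\circ\theta_1\) with \(\deg u\ge 1/\varepsilon\) whose proper prefix \(\theta_{n-1}\circ\cdots\circ\theta_1\) has degree \(<1/\varepsilon\). I claim
\[
\widehat X(P)\subseteq F\cup\bigcup_{u\in W_\varepsilon}\widehat X^{u}(P),
\]
where \(F\) is the finite set of values \(\widehat h_u(P)\) with \(\deg u<1/\varepsilon\). To see this, take any nonidentity word \(w\). Either \(\deg w<1/\varepsilon\), so \(\widehat h_w(P)\in F\), or \(w\) has a unique prefix in \(W_\varepsilon\). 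In the second case \(\widehat h_w(P)\in X^{u}(P)\) for that prefix \(u\). Thus \(X(P)\) lies in this finite union, and since each \(\widehat X^u(P)\) is closed, \(F\) is finite, and \(W_\varepsilon\) is finite (degrees at least \(2\) bound the lengths), the union is closed and contains the closure \(\widehat X(P)\). Finite points contribute zero to \(\mathcal H^s\), or can be covered by intervals of arbitrarily small total \(s\)-mass.

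By Lemma~\ref{lem:cylinder-containment}, each \(\widehat X^u(P)\) has diameter at most \(2B/\deg u\le 2B\varepsilon\). Then
\[
\mathcal H^{s_0}_{2B\varepsilon}(\widehat X(P))\le (2B)^{s_0}\sum_{u\in W_\varepsilon}(\deg u)^{-s_0}.
\]
The key identity is that \(\sum_{u\in W}(\deg u)^{-s_0}=1\) for any finite complete prefix-free stopping family \(W\). This follows by induction on the family: replacing a word \(u\) by its two extensions \(\phi\circ u,\psi\circ u\) preserves the sum, because
\[
(d_\phi\deg u)^{-s_0}+(d_\psi\deg u)^{-s_0}=(\deg u)^{-s_0}.
\]
The family \(W_\varepsilon\) is complete, since every infinite sequence eventually reaches degree \(\ge1/\varepsilon\) and does so at a unique first time. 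It is obtained from \(\{\operatorname{id}\}\) by finitely many such splittings. Hence \(\mathcal H^{s_0}_{2B\varepsilon}\le(2B)^{s_0}\). Letting \(\varepsilon\to0\) gives \(\mathcal H^{s_0}(\widehat X(P))\le(2B)^{s_0}<\infty\), so \(\mathcal H^{s}=0\) for every \(s>s_0\) and \(\dim_H\le s_0\). If \(B=0\), all diameters vanish and the bound is trivial.

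The special case \(d_\phi=d_\psi=d\) follows by solving \(2d^{-s_0}=1\), which gives \(s_0=\log2/\log d\). Existence and uniqueness of \(s_0\) come from strict monotonicity of \(s\mapsto d_\phi^{-s}+d_\psi^{-s}\), which decreases from \(2\) to \(0\). Since \(d_\phi,d_\psi\ge2\), the value at \(s=1\) is at most \(1\), so \(s_0\le1\).

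The main point needing care is the closure step. The completed spectrum may contain limits of values with unbounded word length, so the covering must be by closed cylinders with a finite index set, and the words of degree below \(1/\varepsilon\) must be handled separately. The stopping-time construction above handles both issues.
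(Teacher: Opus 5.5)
Your proof is correct, but it uses a different decomposition from the paper's.

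The paper covers \(\widehat X_{\phi,\psi}(P)\) by the \(2^n\) completed cylinders of fixed word length \(n\). It handles heights coming from short words with the iterate trick \(\widehat h_u=\widehat h_{u^k}\): every finite-composition height is attained by a word of length at least \(n\). It then takes \(s>s_0\) and uses \(\sum_{|u|=n}(\deg u)^{-s}=(d_\phi^{-s}+d_\psi^{-s})^n\to0\).

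You instead stop each branch when its degree first reaches \(1/\varepsilon\). You place the short-word heights in a finite exceptional set \(F\); this is harmless, and in fact redundant by the same iterate trick. You then use the exact identity \(\sum_{u\in W_\varepsilon}(\deg u)^{-s_0}=1\). This is the same tree-splitting argument the paper uses later in Lemma~\ref{lem:stopping-center-cover}.

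Your conclusion \(\mathcal H^{s_0}(\widehat X_{\phi,\psi}(P))\le(2B)^{s_0}\) is stronger than what the paper states. It does not depend on the stopping-time route, though. At \(s=s_0\) the paper's length-\(n\) cover has total mass at most \((2B)^{s_0}(d_\phi^{-s_0}+d_\psi^{-s_0})^n=(2B)^{s_0}\) and mesh at most \(2B\,2^{-n}\), so it gives the same bound.

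What your decomposition actually buys is that all covering intervals have comparable size, since \(1/\varepsilon\le\deg u<d_{\max}/\varepsilon\). That is irrelevant for the dimension bound, but it is exactly what the paper needs for the orbit-counting estimates in Section~\ref{sec:orbit-counting}. The paper's version is slightly shorter because it needs neither the exceptional set nor the splitting induction.
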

\begin{proof}
For every \(n\ge1\),
\[
\widehat X_{\phi,\psi}(P)
=\bigcup_{|u|=n}\widehat X_{\phi,\psi}^{u}(P).
\]
Indeed, replacing a composition by a sufficiently high iterate leaves its dynamical canonical height unchanged and gives a word of length at least \(n\). Thus the cylinders of length \(n\) cover \(X(P)\); taking closures gives the equality above because the union is finite.

By Lemma~\ref{lem:cylinder-containment}, the completed cylinders satisfy
\[
\operatorname{diam}\widehat X^{u}(P)
\le\frac{2B}{\deg u}
\le 2B\,2^{-n}\longrightarrow0.
\]
Fix \(s>s_0\). Summing over all compositions $u=u_n\circ\cdots\circ u_1$ of length \(n\), we obtain
\[
\begin{aligned}
\sum_{|u|=n}
\left(\operatorname{diam}\widehat X^{u}(P)\right)^s
&\le (2B)^s\sum_{|u|=n}(\deg u)^{-s}\\
&\le (2B)^s\left(d_\phi^{-s}+d_\psi^{-s}\right)^n.
\end{aligned}
\]
The last bound follows by expanding the product over the two generator choices at each position. Repeated induced maps are counted with multiplicity, which can only increase the bound. Since \(s>s_0\),
\[
d_\phi^{-s}+d_\psi^{-s}
<d_\phi^{-s_0}+d_\psi^{-s_0}=1,
\]
so the covering sum tends to zero as \(n\to\infty\). Hence
\[
\mathcal H^s\!\left(\widehat X_{\phi,\psi}(P)\right)=0,
\]
and therefore \(\dim_H\widehat X(P)\le s_0\). The equal-degree formula follows from \(2d^{-s_0}=1\).
\end{proof}

\Needspace{7\baselineskip}
\begin{lemma}\label{lem:sequence-no-isolated-points}
Let \(\phi,\psi\in\overline{\mathbb Q}(t)\) be distinct rational maps of degrees at least \(2\), and let \(P\in\mathbb P^1(\overline{\mathbb Q})\). Suppose that distinct infinite sequences in \(\{\phi,\psi\}^{\mathbb N}\) beginning with \(\phi\) have distinct sequence heights at \(P\). Then every open interval containing one of these heights contains at least two distinct such heights.
\end{lemma}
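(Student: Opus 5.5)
Fix an infinite sequence \(\omega=(\phi,\theta_2,\theta_3,\ldots)\) beginning with \(\phi\), and an open interval \(J\ni\widehat h_\omega(P)\). The plan is to produce a second sequence \(\omega'\ne\omega\), also beginning with \(\phi\), whose height lies in \(J\). By hypothesis \(\widehat h_{\omega'}(P)\ne\widehat h_\omega(P)\), so this gives two distinct heights in \(J\).

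The construction is to agree with \(\omega\) on a long prefix and then deviate. For \(n\ge1\), let \(u_n=\theta_n\circ\cdots\circ\theta_1\) be the length-\(n\) prefix, with \(\theta_1=\phi\). Write \(\omega=\tau_n\circ u_n\), where \(\tau_n\) is the tail sequence. Choose any tail \(\tau'_n\ne\tau_n\); for instance, flip the \((n+1)\)-st letter and keep the rest. Then \(\omega'_n\coloneqq\tau'_n\circ u_n\) is a sequence distinct from \(\omega\), and it begins with \(\phi\) because \(n\ge1\). By the first inequality of Lemma~\ref{lem:height-ping-pong},
\[
\bigl|\widehat h_{\omega'_n}(P)-\widehat h_\omega(P)\bigr|\le\frac{2B}{\deg u_n}\le 2B\,2^{-n}.
\]
Choosing \(n\) large enough that \(2B\,2^{-n}\) is less than the distance from \(\widehat h_\omega(P)\) to the complement of \(J\) puts \(\widehat h_{\omega'_n}(P)\) in \(J\). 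The injectivity hypothesis, applied to the distinct sequences \(\omega\) and \(\omega'_n\) that both begin with \(\phi\), gives \(\widehat h_{\omega'_n}(P)\ne\widehat h_\omega(P)\). So \(J\) contains two distinct such heights.

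The argument has no real obstacle. The only points to check are that the notation \(\omega\circ u\) matches the decomposition \(\omega=\tau_n\circ u_n\) in Definition~\ref{def:sequence-height}, with the prefix applied first, and that \(\deg u_n\ge2^n\) so the bound tends to zero. The distinctness of \(\phi\) and \(\psi\) as maps is not actually needed once the injectivity hypothesis is assumed. One can also remark that the sequences beginning with \(\phi\) form a perfect set in the product topology. The sequence-height map is \(2B\,2^{-n}\)-Lipschitz on cylinders and therefore continuous. Injectivity then transfers perfectness to the image: it has no isolated points in itself, which is the content of the statement.
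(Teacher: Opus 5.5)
Your proof is correct and is essentially the paper's argument: flip the \((n+1)\)-st entry and bound the height difference by \(2B/\deg u_n\le 2B\,2^{-n}\). You take this bound from the first inequality of Lemma~\ref{lem:height-ping-pong}, while the paper takes it from Lemma~\ref{lem:cylinder-containment}; it is the same estimate. Then invoke injectivity.

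One correction to your side remark: distinctness of \(\phi\) and \(\psi\) is exactly what makes the flipped sequence a different element of \(\{\phi,\psi\}^{\mathbb N}\), and what makes the sequence space perfect. If \(\phi=\psi\), that space is a single point, the hypothesis holds vacuously, and the conclusion fails.
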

\begin{proof}
Fix such a sequence \(\omega\). For each \(n\ge1\), change only its \((n+1)\)-st entry to the other map, obtaining a sequence \(\omega'\). The first entry remains \(\phi\), and the two heights are distinct by hypothesis.

Let \(u\) be the common prefix of length \(n\). Then \(\widehat h_\omega(P),\widehat h_{\omega'}(P)\in\widehat X^u_{\phi,\psi}(P)\). Lemma~\ref{lem:cylinder-containment} therefore gives
\[
0<\left|\widehat h_\omega(P)-\widehat h_{\omega'}(P)\right|
\le\frac{2B}{\deg u}
\le \frac{2B}{2^n}\longrightarrow0.
\]
Thus every open interval containing \(\widehat h_\omega(P)\) contains another such height.
\end{proof}
We conclude this section with a criterion that converts a uniform
orbitwise height gap into a positive Hausdorff-dimension bound.
\begin{lemma}\label{lem:orbitwise-gap}
Let \(\phi,\psi\in K(t)\) have degrees \(d_\phi,d_\psi\ge2\),
where \(K\) is a number field, and let
\(P\in\mathbb P^1(\overline{\mathbb Q})\). Write
\(\mathcal O(P)=\{u(P):u\in\langle\phi,\psi\rangle\}\), including
the empty prefix, and suppose that
\[
\Delta_*(P):=\inf_{y\in\mathcal O(P)}
|\widehat h_\phi(y)-\widehat h_\psi(y)|>0.
\]
Choose \(C_\phi,C_\psi\ge0\) such that
\[
|h(\theta(Q))-d_\theta h(Q)|\le C_\theta
\qquad
\bigl(\theta\in\{\phi,\psi\},\ Q\in\mathbb P^1(\overline{\mathbb Q})\bigr),
\]
and put
\[
B=\max\left\{\frac{C_\phi}{d_\phi-1},
               \frac{C_\psi}{d_\psi-1}\right\}.
\]
Let \(s_0>0\) satisfy
\(d_\phi^{-s_0}+d_\psi^{-s_0}=1\). For any integer \(j\ge1\)
satisfying
\[
2B(d_\phi^{-j}+d_\psi^{-j})<\Delta_*(P),
\]
put
\[
t=\frac{s_0}{j},\qquad
\gamma=\Delta_*(P)-2B(d_\phi^{-j}+d_\psi^{-j})>0.
\]
Then
\[
\dim_H\widehat X_{\phi,\psi}(P)\ge t>0,
\]
and
\[
\mathcal H^t\bigl(\widehat X_{\phi,\psi}(P)\bigr)
\ge\mathcal H^t\bigl(\widehat X_{\phi^j,\psi^j}(P)\bigr)
\ge d_\phi^{-s_0}\gamma^t>0.
\]
\end{lemma}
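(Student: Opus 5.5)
The plan is to run a mass-distribution argument on the Cantor-type set generated by the two blocks \(\Phi=\phi^j\) and \(\Psi=\psi^j\). First I will record the reductions. Every composition of \(\Phi,\Psi\) is a composition of \(\phi,\psi\), and its canonical height does not depend on how the word is grouped. Hence \(X_{\phi^j,\psi^j}(P)\subseteq X_{\phi,\psi}(P)\), so \(\widehat X_{\phi^j,\psi^j}(P)\subseteq\widehat X_{\phi,\psi}(P)\), and the first inequality between the Hausdorff measures follows by monotonicity.

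For the lower bound, I will not use the constants attached to \(\Phi,\Psi\) directly. Instead I work with sequences \(\omega\in\{\Phi,\Psi\}^{\mathbb N}\), viewed as sequences in \(\{\phi,\psi\}^{\mathbb N}\). Then Lemma~\ref{lem:sequence-bound} and Lemma~\ref{lem:height-ping-pong} apply with the same \(B\). Let \(K\) be the set of heights \(\widehat h_\omega(P)\) for such \(\omega\); it lies in the completed spectrum. For a finite \(\Phi,\Psi\)-word \(u\), let \(K_u\) be the heights of sequences with prefix \(u\), so \(K_u\subseteq I_u(P)\) by Lemma~\ref{lem:cylinder-containment}.

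The key separation step is a rescaling identity. Writing \(y=u(P)\in\mathcal O(P)\), we have
\[
\widehat h_{\omega\circ u}(P)=\frac{\widehat h_\omega(y)}{\deg u}.
\]
Apply Lemma~\ref{lem:height-ping-pong} at the point \(y\) with \(m=n=j\), and use \(\Delta(y)\ge\Delta_*(P)\). This shows that any height in \(K_{\Phi\circ u}\) and any height in \(K_{\Psi\circ u}\) differ by at least \(\gamma/\deg u\). Consequently the two child cylinders of every \(u\) are disjoint and separated by a gap of length at least \(\gamma/\deg u\). In particular, the coding map \(\omega\mapsto\widehat h_\omega(P)\) is injective on \(\{\Phi,\Psi\}^{\mathbb N}\), and it is continuous.

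Next I define a Bernoulli measure on \(\{\Phi,\Psi\}^{\mathbb N}\). Give the letter \(\Phi\) weight \(d_\phi^{-s_0}\) and the letter \(\Psi\) weight \(d_\psi^{-s_0}\); these sum to \(1\) by the definition of \(s_0\). Push this measure forward to a probability measure \(\mu\) on \(K\). Since \(\deg\Phi=d_\phi^{j}\), a word \(u\) has
\[
\mu(K_u)=\prod d_{\theta}^{-s_0}=(\deg u)^{-s_0/j}=(\deg u)^{-t}.
\]
Now take any interval \(I\) of small length meeting \(K\). Let \(u\) be the longest word with \(I\cap K\subseteq K_u\); such a word exists and is finite, because the cylinders shrink to points by injectivity. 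By maximality, \(I\) meets both children of \(u\), so \(|I|\ge\gamma/\deg u\). Therefore
\[
\mu(I)\le\mu(K_u)=(\deg u)^{-t}\le\gamma^{-t}|I|^t.
\]
The mass distribution principle then gives \(\mathcal H^t(K)\ge\gamma^t\), hence \(\dim_H\ge t\).

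The main obstacle is getting the constant exactly right and handling edge cases. The case \(u=\operatorname{id}\) uses \(\deg u=1\) and the gap at \(P\) itself, which is permitted because the orbit includes the empty prefix. The stated bound \(d_\phi^{-s_0}\gamma^t\) is weaker than \(\gamma^t\). If my root case needs care, I will instead restrict to sequences starting with \(\Phi\) and normalize \(\mu\) on that subcylinder, which loses exactly a factor \(d_\phi^{-s_0}\). I would check that the rescaling identity holds exactly for sequence heights, which it does by the defining limit. I would also check that the ping-pong lemma needs no hypothesis on \(y\) beyond \(\Delta(y)\ge\Delta_*(P)\).
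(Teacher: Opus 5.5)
Your proposal is correct. Its core is the same as the paper's proof. The cylinder weights $(\deg u)^{-t}$ are additive over the two children because $d_\phi^{-s_0}+d_\psi^{-s_0}=1$. The gap $\gamma/\deg u$ between the children of $u$ comes from the rescaling identity together with Lemma~\ref{lem:height-ping-pong} applied at $u(P)$. A longest-common-prefix argument then controls any set meeting the heights.

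The difference is in the packaging. The paper proves the covering bound by hand:
\begin{itemize}
\item it restricts to sequences whose first block is $\phi^j$, with weights $d_\phi^{s_0}(\deg u)^{-t}$;
\item it enlarges an arbitrary cover to open intervals;
\item it uses Lemma~\ref{lem:sequence-no-isolated-points} to guarantee that each relevant interval contains two heights;
\item it uses compactness to reduce to finitely many prefixes.
\end{itemize}
You instead invoke the mass distribution principle for the Bernoulli measure on the full block shift. This is shorter. Since $P\in\mathcal O(P)$ already gives the gap $\gamma$ at the root, it also yields $\mathcal H^t(\widehat X_{\phi^j,\psi^j}(P))\ge\gamma^t$. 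That is stronger than the stated $d_\phi^{-s_0}\gamma^t$. The paper's fixing of the first block is not needed for the separation; it only costs the factor $d_\phi^{-s_0}$.

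A few points need tidying.
\begin{itemize}
\item If $I\cap K$ is a single point, there is no longest admissible $u$. Shrinking cylinders only show that this is the one case in which a longest word fails to exist. In that case $\mu(I)=0$, because $\mu$ has no atoms: $\mu(K_{u_n})=(\deg u_n)^{-t}\to0$.
\item If $I\cap K$ contains two points, they come from distinct sequences that diverge at a finite position. The admissible words then form a finite chain of prefixes, so a longest one exists and your maximality step goes through.
\item For the middle inequality you need $K\subseteq\widehat X_{\phi^j,\psi^j}(P)$, not merely inclusion in the $\phi,\psi$ spectrum. This follows from Lemma~\ref{lem:cylinder-containment}: the canonical height of each nonempty block prefix lies within $2B/\deg$ of the corresponding sequence height.
\item Rename your Cantor set, since $K$ already denotes the number field.
\end{itemize}
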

\begin{proof}
Set \(t=s_0/j>0\) and put
\[
\gamma=\Delta_*(P)
-2B\left(d_\phi^{-j}+d_\psi^{-j}\right)>0.
\]

Consider infinite sequences formed from \(\phi^j,\psi^j\), with the first block fixed to be \(\phi^j\). We expand each block into \(j\) original generators when writing \(\widehat h_\omega(P)\). Since \(X_{\phi^j,\psi^j}(P)\subseteq X_{\phi,\psi}(P)\), taking closures gives
\[
\widehat X_{\phi^j,\psi^j}(P)
\subseteq\widehat X_{\phi,\psi}(P).
\]

Take two distinct sequences \(\omega\) and \(\omega'\) under consideration, and let \(u\) be the longest finite word in the blocks \(\phi^j,\psi^j\) with which both sequences begin. Since their first block is fixed, \(u\) is nonempty, and thus \(u(P)\in\mathcal O(P)\). Their remaining sequences, denoted by \(\alpha,\beta\), begin with \(\phi^j,\psi^j\), respectively.

By Lemma~\ref{lem:height-ping-pong}, applied at \(u(P)\) with both exponents equal to \(j\),
\[
\begin{aligned}
\left|\widehat h_\alpha(u(P))-\widehat h_\beta(u(P))\right|
&\ge
\Delta(u(P))
-2B\left(d_\phi^{-j}+d_\psi^{-j}\right)\\
&\ge\gamma.
\end{aligned}
\]
Removing the common prefix scales heights by \((\deg u)^{-1}\). Therefore
\begin{equation}\label{eq:orbit-separation}
\left|\widehat h_\omega(P)-\widehat h_{\omega'}(P)\right|
\ge\frac{\gamma}{(\deg u)}.
\end{equation}
In particular, distinct sequences under consideration give distinct heights. Also, \(\phi^j\ne\psi^j\), since otherwise \(\widehat h_\phi=\widehat h_{\phi^j}=\widehat h_{\psi^j}=\widehat h_\psi\), contrary to \(\Delta_*(P)>0\).

For each prefix \(u\) beginning with \(\phi^j\), denote $\deg u$ by $d_u$, and consider the number
\[
d_\phi^{s_0}(d_u)^{-t}.
\]
For the initial prefix \(\phi^j\), this number is \(1\). Extending a prefix by either possible next block gives
\begin{equation}\label{eq:prefix-sum}
\begin{aligned}
&d_\phi^{s_0}(d_ud_\phi^j)^{-t}
+d_\phi^{s_0}(d_ud_\psi^j)^{-t}\\
&\qquad=
d_\phi^{s_0}d_u^{-t}
\left(d_\phi^{-s_0}+d_\psi^{-s_0}\right)\\
&\qquad=d_\phi^{s_0}d_u^{-t}.
\end{aligned}
\end{equation}
Consequently, whenever finitely many prefixes cover all the sequences under consideration, the sum of their associated numbers is at least \(1\), the number associated with the initial prefix. Indeed, replace each prefix by all its extensions to a common length. Repeated application of \eqref{eq:prefix-sum} shows that the sum over these extensions equals the number associated with the original prefix. Every word of that length beginning with the fixed first block is counted at least once.

Now take a countable cover
\[
\widehat X_{\phi^j,\psi^j}(P)\subseteq\bigcup_i U_i.
\]
If \(\sum_i(\operatorname{diam}U_i)^t=\infty\), there is nothing to prove. Otherwise, for any \(\varepsilon>0\), enlarge the nonempty \(U_i\) to open intervals \(V_i\) such that
\begin{equation}\label{eq:open-intervals}
\sum_i(\operatorname{diam}V_i)^t
\le
\sum_i(\operatorname{diam}U_i)^t+\varepsilon.
\end{equation}

Indeed, continuity of \(r\mapsto r^t\) allows each enlargement to increase
the corresponding summand by at most \(\varepsilon/2^i\); summing these increases
gives at most \(\varepsilon\).

Discard the intervals \(V_i\) that do not meet the sequence heights under consideration. By Lemma~\ref{lem:sequence-no-isolated-points}, applied to \(\phi^j,\psi^j\), every remaining \(V_i\) contains at least two such heights. Let \(w_i\) be the longest prefix shared by all sequences under consideration whose heights lie in \(V_i\). Two of these sequences diverge immediately after \(w_i\), so \eqref{eq:orbit-separation} gives
\[
\operatorname{diam}V_i\ge\frac{\gamma}{\deg{w_i}}.
\]
It follows that
\begin{equation}\label{eq:interval-bound}
d_\phi^{s_0}(\deg{w_i})^{-t}
\le
d_\phi^{s_0}\gamma^{-t}(\operatorname{diam}V_i)^t.
\end{equation}

Every sequence under consideration begins with one of the prefixes \(w_i\). By compactness of the family of sequences with first block fixed, finitely many of these prefixes suffice. Using the prefix-cover consequence of \eqref{eq:prefix-sum}, followed by \eqref{eq:interval-bound} and \eqref{eq:open-intervals}, we obtain
\[
\begin{aligned}
1
&\le \sum_i d_\phi^{s_0}(\deg{w_i})^{-t}\\
&\le d_\phi^{s_0}\gamma^{-t}
\sum_i(\operatorname{diam}V_i)^t\\
&\le d_\phi^{s_0}\gamma^{-t}
\left(\sum_i(\operatorname{diam}U_i)^t+\varepsilon\right).
\end{aligned}
\]
Letting \(\varepsilon\to0\), taking the infimum over covers of diameter at most \(\delta\), and then letting \(\delta\to0^+\), gives
\[
\mathcal H^t\!\left(\widehat X_{\phi^j,\psi^j}(P)\right)
\ge d_\phi^{-s_0}\gamma^t>0.
\]
Thus
\[
\dim_H\widehat X_{\phi,\psi}(P)
\ge
\dim_H\widehat X_{\phi^j,\psi^j}(P)
\ge t=\frac{s_0}{j}.
\]
\end{proof}
\section{General criteria}\label{sec:general-criteria}
For $\theta\in\{\phi,\psi\}$ and $Q\in\mathbb P^1(K_v)$, set
\[
\kappa_{\theta,v}(Q)
\coloneqq\widehat h_{\theta,v}(\widetilde Q)-\log\|\widetilde Q\|_v,
\]
where $\widetilde Q$ is any nonzero homogeneous vector representing $Q$. Equation~\ref{eq:scaling-identity} shows that this local correction is independent of the representative, although it depends on the fixed lift.

Fix $P\in\mathbb P^1(K)$. Suppose that there are a finite set \(S\subseteq M_K\) and constants \(\eta>\varepsilon_{\mathrm{out}}\ge0\) such that for every \(Q\in\mathcal O(P)\),
\[
\left|
\frac1{[K:\mathbb Q]}
\sum_{v\in S}n_v
\bigl(\kappa_{\phi,v}(Q)-\kappa_{\psi,v}(Q)\bigr)
\right|\ge\eta
\]
and
\[
\left|
\frac1{[K:\mathbb Q]}
\sum_{v\notin S}n_v
\bigl(\kappa_{\phi,v}(Q)-\kappa_{\psi,v}(Q)\bigr)
\right|\le \varepsilon_{\mathrm{out}}.
\]
Then we have $\Delta_*(P)\ge\eta-\varepsilon_{\mathrm{out}}>0$ by the triangle inequality, and this observation directly implies
\[
\dim_H\widehat X_{\phi,\psi}(P)>0
\]
by Lemma~\ref{lem:orbitwise-gap}.

\begin{lemma}\label{lem:common-critical-neighborhood}
Let \(\phi,\psi\in K(t)\) have degrees at least \(2\). Suppose that \(\alpha\in\mathbb P^1(K)\) is periodic and critical for both maps. Choose \(r,s\ge1\) such that
\[
\phi^r(\alpha)=\psi^s(\alpha)=\alpha.
\]
Then, for every place \(v\in M_K\), there is a closed coordinate disk \(D_v\subseteq\mathbb P^1(K_v)\) centered at \(\alpha\) such that
\[
\phi^r(D_v)\subseteq D_v,
\qquad
\psi^s(D_v)\subseteq D_v.
\]
Moreover, every sequence of applications of \(\phi^r\) and \(\psi^s\) to a point of \(D_v\) converges to \(\alpha\).
\end{lemma}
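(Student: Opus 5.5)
We prove Lemma~\ref{lem:common-critical-neighborhood} by local analysis at the fixed point \(\alpha\) of each return map. Since \(\alpha\) is critical for \(\phi\), the chain rule makes it a critical point of \(F=\phi^r\), and likewise of \(G=\psi^s\); both also fix \(\alpha\). Hence \(\alpha\) is a superattracting fixed point of both \(F\) and \(G\). After a Möbius change of coordinates over \(K\) sending \(\alpha\) to \(0\), chosen once and used at every place, we have \(F(0)=G(0)=0\) and \(F'(0)=G'(0)=0\). Near \(0\) each map is then given by a convergent power series
\[
F(z)=\sum_{k\ge2}a_kz^k,\qquad G(z)=\sum_{k\ge2}b_kz^k
\]
in \(K_v\). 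These expansions are Taylor expansions of rational functions with no pole at \(0\), so they have positive radius of convergence at every place \(v\), archimedean or not.

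Fix \(v\). We seek \(\rho_v>0\) and a constant \(M\) with \(|F(z)|_v\le M|z|_v^2\) and \(|G(z)|_v\le M|z|_v^2\) for \(|z|_v\le\rho_v\).

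\emph{Non-archimedean case.} Take \(\rho\) below the radius of convergence, so that \(|a_k|_v\rho^k\) is bounded. The ultrametric inequality then gives \(|F(z)|_v\le\max_k|a_k|_v|z|_v^k\le M|z|_v^2\) for \(|z|_v\le\rho\).

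\emph{Archimedean case.} Cauchy estimates on a slightly larger disk give the same kind of bound with \(M=\sum_k|a_k|\rho^{k-2}\) (finite) on \(|z|\le\rho\).

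Now shrink \(\rho_v\) so that \(M\rho_v\le1/2\). For \(|z|_v\le\rho_v\) we get \(|F(z)|_v\le M|z|_v\cdot|z|_v\le\tfrac12|z|_v\le\rho_v\), and the same holds for \(G\). So the closed disk \(D_v=\{|z|_v\le\rho_v\}\) satisfies \(F(D_v)\subseteq D_v\) and \(G(D_v)\subseteq D_v\). Pulled back through the Möbius coordinate, \(D_v\) is a closed coordinate disk centered at \(\alpha\).

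For convergence, take any sequence of applications of \(F\) and \(G\) to a point of \(D_v\). Each step at least halves \(|z|_v\), so after \(n\) steps \(|z_n|_v\le2^{-n}\rho_v\to0\). Thus every such sequence converges to \(\alpha\), and the convergence is in fact quadratic.

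The main point needing care is making a single uniform constant \(M\) work for both series at each place, including non-archimedean places of small residue characteristic, where radii of convergence can be small. This is handled by taking \(\rho_v\) smaller than both radii, with no uniformity in \(v\) required, since the statement is pointwise in \(v\). A secondary point is keeping the coordinate change over \(K\): we use a translation when \(\alpha\ne\infty\), and \(z\mapsto1/z\) when \(\alpha=\infty\).
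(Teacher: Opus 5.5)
Your proof is correct and follows the paper's argument. In the coordinate \(\chi\) (a translation, or \(1/t\) when \(\alpha=\infty\)), both return maps satisfy \(|\chi(F(Q))|_v\le M_v|\chi(Q)|_v^2\) near the superattracting fixed point \(\alpha\), and choosing \(M_v\rho_v\le\tfrac12\) makes each application halve \(|\chi|_v\), giving invariance and convergence; your power-series derivation of the quadratic bound at both archimedean and nonarchimedean places fills in the step the paper states in one line.
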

\begin{proof}
Choose a local coordinate \(\chi\) centered at \(\alpha\): take \(\chi=t-\alpha\) when \(\alpha\) is finite, and \(\chi=1/t\) otherwise.

Both iterates fix \(\alpha\) and have local degree at least \(2\) there. Consequently, for some \(M_v>0\) and all sufficiently small \(|\chi(Q)|_v\),
\[
\max\left\{
|\chi(\phi^r(Q))|_v,\,
|\chi(\psi^s(Q))|_v
\right\}
\le M_v|\chi(Q)|_v^2.
\]
Choose \(\rho_v>0\) sufficiently small that this estimate holds on
\[
D_v=\{Q:|\chi(Q)|_v\le\rho_v\}
\]
and \(M_v\rho_v\le\tfrac12\). Then, for \(Q\in D_v\),
\[
|\chi(\phi^r(Q))|_v,\ |\chi(\psi^s(Q))|_v
\le\tfrac12|\chi(Q)|_v.
\]
Thus both iterates preserve \(D_v\), and each successive application multiplies the coordinate size by at most \(1/2\). This proves convergence.
\end{proof}

\begin{lemma}\label{lem:local-height-at-periodic-critical-point}
Let \(\alpha,r,s,D_v\) be as in Lemma~\ref{lem:common-critical-neighborhood}, with a nonarchimedean place \(v\). Choose \(\widetilde\alpha\in K^2\setminus\{0\}\) representing \(\alpha\), and write
\[
\widetilde\phi^{r}(\widetilde\alpha)=\lambda_\phi\widetilde\alpha,
\qquad
\widetilde\psi^{s}(\widetilde\alpha)=\lambda_\psi\widetilde\alpha.
\]
After shrinking \(D_v\), for every \(P\in D_v\) and every nonzero homogeneous representative \(\widetilde P\in K_v^2\),
\[
\widehat h_{\phi,v}(\widetilde P)
=
\log\|\widetilde P\|_v-\log\|\widetilde\alpha\|_v
+\frac{\log|\lambda_\phi|_v}{d_\phi^r-1},
\]
and
\[
\widehat h_{\psi,v}(\widetilde P)
=
\log\|\widetilde P\|_v-\log\|\widetilde\alpha\|_v
+\frac{\log|\lambda_\psi|_v}{d_\psi^s-1}.
\]
In particular,
\[
\widehat h_{\phi,v}(\widetilde P)
-\widehat h_{\psi,v}(\widetilde P)
=
\frac{\log|\lambda_\phi|_v}{d_\phi^r-1}
-\frac{\log|\lambda_\psi|_v}{d_\psi^s-1}.
\]
\end{lemma}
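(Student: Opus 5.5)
The plan is to reduce to the iterate \(\widetilde\phi^{r}\) and show that the correction \(\log\|\widetilde\phi^{r}(\widetilde Q)\|_v-d_\phi^r\log\|\widetilde Q\|_v\) is \emph{constant} on a small enough disk around \(\alpha\). Telescoping along the orbit inside \(D_v\) then gives the formula. The argument for \(\psi\) is identical with \(s\) in place of \(r\), and the final identity follows by subtraction. Throughout, write \(F=\widetilde\phi^{r}\), a homogeneous map of degree \(D=d_\phi^r\) with no nontrivial zeros; in particular \(\lambda_\phi\neq0\).

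\textbf{Step 1 (pass to the iterate).} By Definition~\ref{def:local-sequence-height}, \(\widehat h_{\phi,v}(\widetilde P)=\lim_n \log\|\widetilde\phi^{n}(\widetilde P)\|_v/d_\phi^n\). Restricting to the subsequence \(n=rm\) shows this equals \(\lim_m \log\|F^m(\widetilde P)\|_v/D^m\).

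\textbf{Step 2 (local constancy).} Put
\[
G(\widetilde Q)=\log\|F(\widetilde Q)\|_v-D\log\|\widetilde Q\|_v .
\]
By homogeneity, \(G(c\widetilde Q)=G(\widetilde Q)\) for \(c\in K_v^\times\), so \(G\) descends to a function of \(Q\in\mathbb P^1(K_v)\). Since \(F\) has coefficients in \(K\), the ultrametric inequality gives a constant \(c_v\) such that
\[
\|F(x)-F(y)\|_v\le c_v\|x-y\|_v\max\{\|x\|_v,\|y\|_v\}^{D-1}.
\]
Shrink \(D_v\) so that every \(Q\in D_v\) has a representative \(\widetilde Q\) with \(\|\widetilde Q\|_v=\|\widetilde\alpha\|_v\) and \(\|\widetilde Q-\widetilde\alpha\|_v\) as small as we like. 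This uses the coordinate \(\chi\) from Lemma~\ref{lem:common-critical-neighborhood}: take \(\widetilde Q=\widetilde\alpha+\chi(Q)\cdot(\text{fixed vector})\), suitably normalized.

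Requiring \(c_v\|\widetilde Q-\widetilde\alpha\|_v\|\widetilde\alpha\|_v^{D-1}<|\lambda_\phi|_v\|\widetilde\alpha\|_v\) makes
\[
\|F(\widetilde Q)-\lambda_\phi\widetilde\alpha\|_v<\|\lambda_\phi\widetilde\alpha\|_v .
\]
Hence \(\|F(\widetilde Q)\|_v=|\lambda_\phi|_v\|\widetilde\alpha\|_v\). Therefore \(G\) is constant on \(D_v\), with value
\[
G_0=\log|\lambda_\phi|_v-(D-1)\log\|\widetilde\alpha\|_v .
\]
This is the same value as at \(\alpha\) itself, by the direct computation \(F^m(\widetilde\alpha)=\lambda_\phi^{(D^m-1)/(D-1)}\widetilde\alpha\).

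\textbf{Step 3 (telescoping).} By Lemma~\ref{lem:common-critical-neighborhood}, \(D_v\) is preserved by \(\phi^r\). Step 2 therefore applies at every orbit point \(F^m(\widetilde P)\), since \(G\) depends only on the projective point. This gives
\[
\frac{\log\|F^m(\widetilde P)\|_v}{D^m}-\log\|\widetilde P\|_v=\sum_{k=0}^{m-1}\frac{G(F^k\widetilde P)}{D^{k+1}}=G_0\sum_{k=1}^{m}D^{-k}\longrightarrow\frac{G_0}{D-1}.
\]
Combined with Step 1, this yields
\[
\widehat h_{\phi,v}(\widetilde P)=\log\|\widetilde P\|_v-\log\|\widetilde\alpha\|_v+\frac{\log|\lambda_\phi|_v}{d_\phi^r-1}.
\]
Doing the same for \(\psi\) and shrinking \(D_v\) to satisfy both smallness conditions gives both formulas, and subtracting them gives the last identity. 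Shrinking the disk keeps it invariant, because the proof of Lemma~\ref{lem:common-critical-neighborhood} works for any smaller radius.

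The main obstacle is Step 2: choosing representatives uniformly across \(D_v\) and making the ultrametric Lipschitz estimate precise so that the strict inequality forcing \(\|F(\widetilde Q)\|_v=|\lambda_\phi|_v\|\widetilde\alpha\|_v\) holds on the whole disk. Notably, the critical-point hypothesis is needed only for the invariance of \(D_v\). The constancy of \(G\) comes from \(F(\widetilde\alpha)=\lambda_\phi\widetilde\alpha\neq0\) together with the nonarchimedean triangle inequality.
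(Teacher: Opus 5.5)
Your proposal is correct and follows essentially the same route as the paper, which also shows that the one-step correction \(b(P)=\log\|\widetilde\phi^{r}(\widetilde P)\|_v-d_\phi^r\log\|\widetilde P\|_v\) is constant on a shrunken invariant disk, with value \(\log|\lambda_\phi|_v-(d_\phi^r-1)\log\|\widetilde\alpha\|_v\), and then telescopes along the \(\phi^r\)-orbit. The paper only asserts this local constancy; your ultrametric Lipschitz estimate, with the strict inequality \(\|F(\widetilde Q)-\lambda_\phi\widetilde\alpha\|_v<\|\lambda_\phi\widetilde\alpha\|_v\), supplies the justification.
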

\begin{proof}
Put \(G=\widetilde\phi^{r}\) and \(D=d_\phi^r\). The function
\[
b(P)=\log\|G(\widetilde P)\|_v-D\log\|\widetilde P\|_v
\]
is independent of the representative and locally constant near \(\alpha\). Shrinking \(D_v\) while retaining invariance, we have
\[
b(P)=b(\alpha)
=\log|\lambda_\phi|_v-(D-1)\log\|\widetilde\alpha\|_v.
\]
Since every projective iterate remains in \(D_v\),
\[
\frac{\log\|G^{ n}(\widetilde P)\|_v}{D^n}
-\log\|\widetilde P\|_v
=
b(\alpha)\sum_{j=1}^{n}D^{-j}.
\]
Letting \(n\to\infty\), and using \(G^{ n}=\widetilde\phi^{ rn}\), gives
\[
\widehat h_{\phi,v}(\widetilde P)-\log\|\widetilde P\|_v
=\frac{b(\alpha)}{D-1}.
\]
This proves the first formula. The argument for \(\psi\) is identical, after shrinking \(D_v\) further if necessary. Subtracting the formulas proves the final assertion.
\end{proof}

\begin{lemma}\label{lem:local-height-at-periodic-critical-point-archimedean}
Let \(\alpha,r,s,D_v\) be as in Lemma~\ref{lem:common-critical-neighborhood}, with \(v\) archimedean. Choose \(\widetilde\alpha\in K^2\setminus\{0\}\) representing \(\alpha\), and define \(\lambda_\phi,\lambda_\psi\) as in Lemma~\ref{lem:local-height-at-periodic-critical-point}. Choose a local coordinate \(\chi\) centered at \(\alpha\). After shrinking the common invariant disk \(D_v\), there is \(A_v>0\) such that, for every \(P\in D_v\) and every nonzero homogeneous representative \(\widetilde P\in K_v^2\),
\begin{equation}\label{eq:bound-1}
\left|
\widehat h_{\phi,v}(\widetilde P)
-\log\|\widetilde P\|_v+\log\|\widetilde\alpha\|_v
-\frac{\log|\lambda_\phi|_v}{d_\phi^r-1}
\right|
\le
\frac{A_v|\chi(P)|_v}{d_\phi^r-1},
\end{equation}
and
\begin{equation}\label{eq:bound-2}
\left|
\widehat h_{\psi,v}(\widetilde P)
-\log\|\widetilde P\|_v+\log\|\widetilde\alpha\|_v
-\frac{\log|\lambda_\psi|_v}{d_\psi^s-1}
\right|
\le
\frac{A_v|\chi(P)|_v}{d_\psi^s-1}.
\end{equation}
\end{lemma}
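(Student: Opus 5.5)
We follow the proof of Lemma~\ref{lem:local-height-at-periodic-critical-point}, adapted to the archimedean setting. Put \(G=\widetilde\phi^{r}\), \(D=d_\phi^r\), and
\[
b(P)=\log\|G(\widetilde P)\|_v-D\log\|\widetilde P\|_v .
\]
The function \(b\) is independent of the representative and continuous on \(\mathbb P^1(K_v)\). At \(\alpha\) we have \(b(\alpha)=\log|\lambda_\phi|_v-(D-1)\log\|\widetilde\alpha\|_v\). In the nonarchimedean case \(b\) is locally constant. Here it is not, so we replace constancy by a Lipschitz bound. The function \(b\) is a difference of logarithms of max-norms of nonvanishing homogeneous vectors, and \(\widetilde P\mapsto G(\widetilde P)\) is polynomial. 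After normalizing so that \(\widetilde P\) lies near \(\widetilde\alpha\) in the chart \(\chi\), both \(\log\|G(\widetilde P)\|_v\) and \(\log\|\widetilde P\|_v\) are Lipschitz functions of the coordinate. This holds on a small disk because the norms there are bounded away from \(0\) and \(\log\max\) is Lipschitz. The kink of \(\max\) does no harm, since Lipschitz bounds survive it. Hence there is \(L_v>0\) with
\[
|b(P)-b(\alpha)|\le L_v|\chi(P)|_v
\]
for \(P\) in a shrunken disk. The same disk, after shrinking, remains invariant and keeps the contraction \(|\chi(G(P))|_v\le\tfrac12|\chi(P)|_v\) from Lemma~\ref{lem:common-critical-neighborhood}.

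Next we telescope. Writing \(P_k=\phi^{rk}(P)\), we get
\[
\frac{\log\|G^n(\widetilde P)\|_v}{D^n}-\log\|\widetilde P\|_v=\sum_{k=0}^{n-1}\frac{b(P_k)}{D^{k+1}} .
\]
The orbit \(P_k\) stays in \(D_v\) and satisfies \(|\chi(P_k)|_v\le 2^{-k}|\chi(P)|_v\). Since \(\sum_{k\ge0}D^{-(k+1)}=1/(D-1)\), the difference from \(b(\alpha)/(D-1)\) is at most
\[
L_v|\chi(P)|_v\sum_{k\ge0}\frac{2^{-k}}{D^{k+1}}\le L_v|\chi(P)|_v\,\frac{1}{D}\cdot\frac{1}{1-1/(2D)}\le \frac{2L_v|\chi(P)|_v}{D}.
\]
This is at most \(2L_v|\chi(P)|_v/(D-1)\). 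Letting \(n\to\infty\), and using that the subsequence \(rn\) computes \(\widehat h_{\phi,v}\), gives \eqref{eq:bound-1} with \(A_v=2L_v\). The quadratic contraction would in fact give a better bound, but we do not need it.

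For \eqref{eq:bound-2} we repeat the argument with \(\widetilde\psi^{s}\). We shrink the disk once more so that both Lipschitz estimates hold, and take \(A_v\) to be the maximum of the two constants.

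The main obstacle is the Lipschitz estimate for \(b\) near \(\alpha\), uniformly in the representative. We handle it by fixing the normalized representative \((\chi,1)\)-type lift near \(\alpha\), or its inverse at \(\infty\). Then \(b\) becomes a composition of smooth nonvanishing maps with \(\log\max\), and a mean-value bound on a compact disk gives \(L_v\).
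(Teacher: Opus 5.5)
Your proposal is correct and follows essentially the same argument as the paper: a Lipschitz bound \(|b(P)-b(\alpha)|\le L_v|\chi(P)|_v\) from analytically chosen representatives whose norms stay bounded away from zero, followed by telescoping \(\sum_k b(P_k)/D^{k+1}\) along the invariant disk and passing to the subsequence \(rn\). The only difference is cosmetic: you use the geometric contraction \(|\chi(P_k)|_v\le 2^{-k}|\chi(P)|_v\), whereas the paper needs only \(|\chi(P_k)|_v\le|\chi(P)|_v\), and this affects nothing but the value of the constant \(A_v\).
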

\begin{proof}
Put \(G=\widetilde\phi^{r}\), \(D=d_\phi^r\), and define
\[
b(P)=\log\|G(\widetilde P)\|_v-D\log\|\widetilde P\|_v.
\]
In a local coordinate chart, choose representatives depending analytically on \(\chi\). Their norms and image norms are bounded away from zero near \(\alpha\). Consequently, their logarithmic norms are locally Lipschitz, so
\[
|b(P)-b(\alpha)|\le A_v|\chi(P)|_v
\]
on a sufficiently small disk. Choose \(A_v\) to work for both return maps.

Write \(P_k=\phi^{rk}(P)\). The invariant-disk estimate gives
\[
|\chi(P_k)|_v\le |\chi(P)|_v.
\]
Similar to nonarchimedean cases, we have
\[
\frac{\log\|G^{n}(\widetilde P)\|_v}{D^n}
-\log\|\widetilde P\|_v
=
\sum_{k=0}^{n-1}\frac{b(P_k)}{D^{k+1}}.
\]
Subtracting \(b(\alpha)\sum_{k=0}^{n-1}D^{-k-1}\), the remaining sum has absolute value at most
\[
\sum_{k=0}^{n-1}
\frac{|b(P_k)-b(\alpha)|}{D^{k+1}}
\le
\frac{A_v|\chi(P)|_v}{D-1}.
\]
Letting \(n\to\infty\), we obtain
\[
\left|
\widehat h_{\phi,v}(\widetilde P)
-\log\|\widetilde P\|_v-\frac{b(\alpha)}{D-1}
\right|
\le\frac{A_v|\chi(P)|_v}{D-1}.
\]
Now
\[
b(\alpha)
=\log|\lambda_\phi|_v-(D-1)\log\|\widetilde\alpha\|_v,
\]
which proves the first estimate. The argument for \(\psi\) is identical.
\end{proof}

\begin{proof}[Proof of Theorem~\ref{thm:main-1}]
The expression defining \(\Lambda\) is independent of the representatives: rescaling \(\widetilde\alpha_v\) adds the same logarithm to its two normalized eigenvalue terms.

Choose common invariant disks using Lemma~\ref{lem:common-critical-neighborhood}. At each finite place in \(S\), Lemma~\ref{lem:local-height-at-periodic-critical-point} gives
\[
\widehat h_{\phi,v}(\widetilde Q)
-\widehat h_{\psi,v}(\widetilde Q)
=
\frac{\log|\lambda_{\phi,v}|_v}{d_\phi^r-1}
-
\frac{\log|\lambda_{\psi,v}|_v}{d_\psi^s-1}
\]
for $Q\in D_v$. At each archimedean place, Lemma~\ref{lem:local-height-at-periodic-critical-point-archimedean} gives the same identity with an error tending uniformly to zero as \(D_v\) shrinks. Choose these disks so that the bounds in \eqref{eq:bound-1} and \eqref{eq:bound-2}, multiplied by \(n_v/[K:\mathbb Q]\) and summed over all archimedean places, have total at most \(|\Lambda|/2\).

Outside \(S\), good reduction of the fixed lifts gives
\[
\widehat h_{\phi,v}(\widetilde Q)
=\log\|\widetilde Q\|_v
=\widehat h_{\psi,v}(\widetilde Q).
\]
The global decomposition~\eqref{eq:global-local-decomposition} therefore yields, for $Q\in\mathcal U$,
\[
\left|
\widehat h_\phi(Q)-\widehat h_\psi(Q)-\Lambda
\right|
\le\frac{|\Lambda|}{2}.
\]
Consequently,
\[
|\widehat h_\phi(Q)-\widehat h_\psi(Q)|
\ge\frac{|\Lambda|}{2}>0.
\]

Both $\phi^r$ and $\psi^s$ preserve \(\mathcal U\), and
\[
\widehat h_{\phi^r}=\widehat h_\phi,
\qquad
\widehat h_{\psi^s}=\widehat h_\psi.
\]
Lemma~\ref{lem:orbitwise-gap}, applied to \(\phi^r,\psi^s\), gives
\[
\dim_H\widehat X_{\phi^r,\psi^s}(P)>0.
\]
Since this completed spectrum is contained in \(\widehat X_{\phi,\psi}(P)\), the conclusion follows.

Finally, the disks contain open neighborhoods at finitely many places, so weak approximation gives \(\mathcal U\ne\varnothing\). See Milne~\cite[Theorem~7.20]{MilneANT2020}.
\end{proof}

\section{Polynomial and rational cases}\label{sec:applications}
\subsection{Polynomial cases}
Throughout this subsection, let $\phi,\psi\in K[t]$ be polynomials
of degree at least $2$, where $K$ is a number field. Write
$a_\phi$ and $a_\psi$ for their leading coefficients, and use
the affine height convention from the preliminaries. For
$v\in M_K$ and $\theta\in\{\phi,\psi\}$, put
\[
c_{\theta,v}
=\frac{\log|a_\theta|_v}{d_\theta-1}.
\]

We now prove the polynomial dimension statements. To prove
that a height spectrum has Hausdorff dimension zero, we cover
height cylinders $\widehat X^u(z)$ by sufficiently small
intervals. To obtain a positive lower bound, we apply
Lemma~\ref{lem:orbitwise-gap}. Lemma~\ref{lem:local-height} specializes the general local estimates of Section~\ref{sec:general-criteria} at infinity. Lemma~\ref{lem:local-sequence-height} then passes to mixed sequences by telescoping, under the common-correction assumption at the selected place.

\begin{lemma}\label{lem:recurrence}
Let $(L_i)_{i\ge0}$ satisfy
\[
L_i=d_iL_{i-1}+b_i,
\]
where $d_i,b_i\in\mathbb R$ and $d_i\ne0$. Put
\[
D_0=1,\qquad D_i=\prod_{j=1}^{i}d_j.
\]
Then, for $0\le m\le n$,
\[
\frac{L_n}{D_n}
=
\frac{L_m}{D_m}
+\sum_{i=m+1}^{n}\frac{b_i}{D_i}.
\]
\end{lemma}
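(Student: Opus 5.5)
The plan is to divide the recurrence by the partial product and telescope. Since every \(d_i\ne0\), each \(D_i\) is a nonzero real number, so the quotients \(L_i/D_i\) make sense. For \(i\ge1\), divide the relation \(L_i=d_iL_{i-1}+b_i\) by \(D_i=d_iD_{i-1}\). This gives the one-step identity
\[
\frac{L_i}{D_i}=\frac{d_iL_{i-1}}{d_iD_{i-1}}+\frac{b_i}{D_i}
=\frac{L_{i-1}}{D_{i-1}}+\frac{b_i}{D_i}.
\]
The only point to note is that \(d_i\) cancels exactly because it is nonzero.

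Next, sum this identity over \(i=m+1,\dots,n\). The left-hand terms \(L_i/D_i\) and the right-hand terms \(L_{i-1}/D_{i-1}\) cancel in pairs, leaving
\[
\frac{L_n}{D_n}-\frac{L_m}{D_m}=\sum_{i=m+1}^{n}\frac{b_i}{D_i}.
\]
Equivalently, one can argue by induction on \(n\ge m\). The base case \(n=m\) holds because the sum is empty. The inductive step adds the one-step identity at \(i=n+1\).

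There is no real obstacle here. The only care needed is with the convention \(D_0=1\), so that the case \(m=0\) reads \(L_n/D_n=L_0+\sum_{i=1}^n b_i/D_i\), and with the empty sum when \(m=n\).
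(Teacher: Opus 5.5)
Your proof is correct and matches the paper's argument, which likewise divides the recurrence by \(D_i=d_iD_{i-1}\) and telescopes over \(i=m+1,\dots,n\). Your remarks on the nonvanishing of \(D_i\), the convention \(D_0=1\), and the empty sum at \(m=n\) fill in details the paper leaves implicit.
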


\begin{proof}
Divide the recurrence by $D_i$ and telescope.
\end{proof}

\begin{lemma}\label{lem:local-height}
Fix $v\in M_K$. There exists $R_v\ge1$ such that
\[
|\theta(y)|_v\ge2|y|_v
\qquad
\bigl(\theta\in\{\phi,\psi\},\ |y|_v>R_v\bigr).
\]
For these $y$, we have
\[
\log|\theta(y)|_v+c_{\theta,v}
=
d_\theta\bigl(\log|y|_v+c_{\theta,v}\bigr)
+
\begin{cases}
0,&v\text{ nonarchimedean},\\
O(|y|_v^{-1}),&v\text{ archimedean}.
\end{cases}
\]
The implied constant depends only on $\phi,\psi$ and $v$.
\end{lemma}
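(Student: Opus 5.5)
We prove Lemma~\ref{lem:local-height} by writing each generator as a leading term times a multiplicative correction. For \(\theta\in\{\phi,\psi\}\) write
\[
\theta(y)=a_\theta y^{d_\theta}\bigl(1+\varepsilon_\theta(y)\bigr),\qquad
\varepsilon_\theta(y)=\sum_{k=1}^{d_\theta}\frac{b_{\theta,k}}{a_\theta}\,y^{-k},
\]
where \(b_{\theta,k}\) is the coefficient of \(y^{d_\theta-k}\). Taking \(\log|\cdot|_v\) gives
\[
\log|\theta(y)|_v=\log|a_\theta|_v+d_\theta\log|y|_v+\log|1+\varepsilon_\theta(y)|_v .
\]
Since \(c_{\theta,v}(d_\theta-1)=\log|a_\theta|_v\), this can be rewritten as
\[
\log|\theta(y)|_v+c_{\theta,v}=d_\theta\bigl(\log|y|_v+c_{\theta,v}\bigr)+\log|1+\varepsilon_\theta(y)|_v .
\]
So everything reduces to bounding the error \(\log|1+\varepsilon_\theta(y)|_v\) for \(|y|_v\) large.

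\textbf{Nonarchimedean places.} Choose \(R_v\ge1\) so large that \(|b_{\theta,k}/a_\theta|_v\,R_v^{-k}<1\) for all \(k\) and both \(\theta\). Then for \(|y|_v>R_v\) the ultrametric inequality gives \(|\varepsilon_\theta(y)|_v<1\). Hence \(|1+\varepsilon_\theta(y)|_v=1\), the error term is exactly \(0\), and \(|\theta(y)|_v=|a_\theta|_v|y|_v^{d_\theta}\).

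\textbf{Archimedean places.} Enlarge \(R_v\) so that \(|\varepsilon_\theta(y)|_v\le\tfrac12\) for \(|y|_v>R_v\); this is possible because
\[
|\varepsilon_\theta(y)|_v\le A|y|_v^{-1}\qquad(|y|_v\ge1),
\]
with \(A=\sum_k|b_{\theta,k}/a_\theta|_v\). Using \(|\log|1+w||\le 2|w|\) for \(|w|\le\tfrac12\), the error is at most \(2A|y|_v^{-1}\). This is the claimed \(O(|y|_v^{-1})\), with a constant depending only on the coefficients of \(\phi,\psi\) and on \(v\).

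\textbf{The doubling inequality.} In both cases, for \(|y|_v>R_v\),
\[
|\theta(y)|_v\ge\tfrac12|a_\theta|_v|y|_v^{d_\theta}\ge\tfrac12|a_\theta|_v\,|y|_v^{\,d_\theta-1}\,|y|_v .
\]
Because \(d_\theta\ge2\), enlarging \(R_v\) further so that \(\tfrac12|a_\theta|_vR_v^{d_\theta-1}\ge2\) for both \(\theta\) gives \(|\theta(y)|_v\ge2|y|_v\).

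\textbf{Choice of \(R_v\).} Take \(R_v\) to be the maximum of the finitely many thresholds above, over both maps. There is no real obstacle; the only point needing care is to make one choice of \(R_v\) serve both maps at once, which is simply this maximum.
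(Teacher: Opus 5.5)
Your proof is correct, and it takes a more direct route than the paper. The paper never expands the polynomial. It treats $\infty$ as a common critical fixed point and gets the doubling inequality from the coordinate contraction of Lemma~\ref{lem:common-critical-neighborhood} with $\chi=1/t$. It then reads off the local canonical height formula $\widehat h_{\theta,v}(y)=\log|y|_v+c_{\theta,v}+e_\theta(y)$ from Lemmas~\ref{lem:local-height-at-periodic-critical-point} and~\ref{lem:local-height-at-periodic-critical-point-archimedean}, using eigenvalue $\lambda_\theta=a_\theta$ at $(1,0)$; here $e_\theta=0$ at finite places and $e_\theta=O(|y|_v^{-1})$ at archimedean ones. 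Finally it substitutes this formula into the functional equation $\widehat h_{\theta,v}\circ\widetilde\theta=d_\theta\widehat h_{\theta,v}$, so the one-step error appears as $d_\theta e_\theta(y)-e_\theta(\theta(y))$.

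Your factorization $\theta(y)=a_\theta y^{d_\theta}(1+\varepsilon_\theta(y))$ instead identifies the error exactly as $\log|1+\varepsilon_\theta(y)|_v$. This is zero by the ultrametric inequality and at most $2A|y|_v^{-1}$ at archimedean places, with explicit $R_v$ and constants. Your argument is self-contained: it needs neither the existence of local canonical heights nor the Lipschitz estimate in the archimedean lemma. The paper's route buys consistency with the critical-point framework of Theorem~\ref{thm:main-1}, and it produces along the way the local-height formula that is reused as \eqref{eq:positive-2}. With your route the logical order is reversed: that formula would instead follow from your one-step identity by telescoping with Lemma~\ref{lem:recurrence} along the constant sequence.
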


\begin{proof}
Infinity is a critical fixed point of both polynomials. Apply Lemma~\ref{lem:common-critical-neighborhood} with $\alpha=\infty$, $r=s=1$, and local coordinate $\chi=1/t$. Its coordinate contraction gives
\[
|\theta(y)|_v\ge2|y|_v
\]
for sufficiently large $|y|_v$, simultaneously for both generators.
Use the polynomial lifts fixed in Section~\ref{subsec:local-sequence-height}. At $\widetilde\alpha=(1,0)$ their eigenvalues are $a_\theta$. Lemmas~\ref{lem:local-height-at-periodic-critical-point} and~\ref{lem:local-height-at-periodic-critical-point-archimedean} therefore give
\[
\widehat h_{\theta,v}(y)
=\log|y|_v+c_{\theta,v}+e_\theta(y),
\]
where $e_\theta=0$ at a finite place and $e_\theta(y)=O(|y|_v^{-1})$ at an archimedean place. Both estimates hold throughout a common exterior region.

The defining limit gives
\[
\widehat h_{\theta,v}(\widetilde\theta(\widetilde P))
=d_\theta\widehat h_{\theta,v}(\widetilde P).
\]
Since $\widetilde\theta(y,1)=(\theta(y),1)$, substitution yields
\[
\log|\theta(y)|_v+c_{\theta,v}
-d_\theta(\log|y|_v+c_{\theta,v})
=d_\theta e_\theta(y)-e_\theta(\theta(y)).
\]
The right-hand side is zero at a finite place. At an archimedean place it is $O(|y|_v^{-1})$, because $|\theta(y)|_v\ge2|y|_v$. The degrees and local constants depend only on the fixed maps and $v$, as required.
\end{proof}

\begin{lemma}\label{lem:local-sequence-height}
Fix $v\in M_K$, and suppose that
\[
c_{\phi,v}=c_{\psi,v}=:c_v.
\]
Let $z\in K$ and
$\omega=(\theta_1,\theta_2,\ldots)\in\{\phi,\psi\}^{\mathbb N}$.
Write
\[
\omega_0=\operatorname{id},\qquad
\omega_i=\theta_i\circ\cdots\circ\theta_1,\qquad
D_i=\deg\omega_i.
\]
Let $R_v$ be as in Lemma~\ref{lem:local-height}. If
$|\omega_m(z)|_v>R_v$ for some $m\ge0$, then
\[
\widehat h_{\omega,v}(z)
=
\frac{\log|\omega_m(z)|_v+c_v}{D_m}
\]
at a nonarchimedean place. At an archimedean place,
\[
\left|
\widehat h_{\omega,v}(z)
-\frac{\log|\omega_m(z)|_v+c_v}{D_m}
\right|
\le
\frac{A_v}{D_m|\omega_m(z)|_v},
\]
where $A_v$ depends only on $\phi,\psi$ and $v$.

If the orbit never leaves the disk $|y|_v\le R_v$,
its local sequence height is zero.
\end{lemma}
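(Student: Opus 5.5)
The plan is to combine Lemma~\ref{lem:recurrence} with Lemma~\ref{lem:local-height}. Put $L_i=\log|\omega_i(z)|_v+c_v$ for $i\ge m$. The first step is an induction showing that the whole tail stays in the exterior region. Indeed, Lemma~\ref{lem:local-height} gives $|\omega_{i}(z)|_v\ge2|\omega_{i-1}(z)|_v$ as long as $|\omega_{i-1}(z)|_v>R_v$. Hence $|\omega_i(z)|_v\ge 2^{i-m}|\omega_m(z)|_v>R_v$ for all $i\ge m$.

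For $i>m$, Lemma~\ref{lem:local-height} applied with $\theta=\theta_i$, $y=\omega_{i-1}(z)$ then gives
\[
L_i=d_{\theta_i}L_{i-1}+b_i .
\]
Here I use the common constant $c_v=c_{\theta_i,v}$; this is exactly where the hypothesis $c_{\phi,v}=c_{\psi,v}$ is needed, since otherwise the additive shift would change with the letter. The error satisfies $b_i=0$ at a finite place, and $|b_i|\le A|\omega_{i-1}(z)|_v^{-1}$ at an archimedean place, with $A$ depending only on $\phi,\psi,v$.

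Next I apply Lemma~\ref{lem:recurrence}, dividing the recurrence by $D_i$ and telescoping:
\[
\frac{L_n}{D_n}=\frac{L_m}{D_m}+\sum_{i=m+1}^n\frac{b_i}{D_i}.
\]
Since $|\omega_n(z)|_v\to\infty$, we have $\log\max\{1,|\omega_n(z)|_v\}=\log|\omega_n(z)|_v$ for large $n$, and $c_v/D_n\to0$. So $L_n/D_n\to\widehat h_{\omega,v}(z)$ by the polynomial-lift formula of Section~\ref{subsec:local-sequence-height}. At a nonarchimedean place every $b_i$ vanishes, which gives the exact identity. At an archimedean place I bound the tail sum using $D_i\ge D_m$ and $|\omega_{i-1}(z)|_v\ge 2^{i-1-m}|\omega_m(z)|_v$:
\[
\sum_{i>m}\frac{|b_i|}{D_i}\le\frac{A}{D_m|\omega_m(z)|_v}\sum_{k\ge0}2^{-k}=\frac{2A}{D_m|\omega_m(z)|_v}.
\]
So $A_v=2A$ works.

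For the last assertion, suppose $|\omega_n(z)|_v\le R_v$ for all $n$. Then $0\le\log\max\{1,|\omega_n(z)|_v\}\le\log R_v$. Dividing by $D_n\ge2^n$ shows that the limit is $0$.

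I expect no serious obstacle. The only delicate points are uniformity and the growth induction. The $O(|y|_v^{-1})$ constant from Lemma~\ref{lem:local-height} must be uniform over both generators on the common exterior region, which that lemma provides. The induction keeping all later iterates beyond $R_v$ is what makes the geometric-series bound legitimate.
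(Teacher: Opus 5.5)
Your proposal is correct and follows the paper's proof essentially step for step. It uses the same growth induction from Lemma~\ref{lem:local-height}, the telescoping identity of Lemma~\ref{lem:recurrence} with the common constant $c_v$, the same passage to the limit, and the same bounded-orbit argument for the last assertion. The only difference is cosmetic: the paper uses $|\omega_{i-1}(z)|_v\ge|\omega_m(z)|_v$ and sums the geometric decay of $1/D_i$ to get $A_v=A$, whereas you put the geometric decay on the orbit sizes and get $A_v=2A$, which is equally acceptable.
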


\begin{proof}
Once the orbit leaves the disk,
Lemma~\ref{lem:local-height} shows that its absolute value
increases by a factor of at least $2$ at each step.

At a nonarchimedean place, Lemma~\ref{lem:recurrence} gives
\[
\frac{\log|\omega_n(z)|_v+c_v}{D_n}
=
\frac{\log|\omega_m(z)|_v+c_v}{D_m}
\qquad(n\ge m).
\]
Letting $n\to\infty$ proves the first assertion.

At an archimedean place, the same telescoping argument and
Lemma~\ref{lem:local-height} give
\[
\begin{aligned}
\left|
\frac{\log|\omega_n(z)|_v+c_v}{D_n}
-\frac{\log|\omega_m(z)|_v+c_v}{D_m}
\right|
&\le
A_v\sum_{i=m+1}^{n}
\frac{1}{D_i|\omega_{i-1}(z)|_v}\\
&\le
\frac{A_v}{|\omega_m(z)|_v}
\sum_{i=m+1}^{n}\frac1{D_i}\\
&\le
\frac{A_v}{D_m|\omega_m(z)|_v},
\end{aligned}
\]
since every degree is at least $2$.
The escaped orbit tends to infinity, and $c_v/D_n\to0$.
Letting $n\to\infty$ therefore gives the required estimate.

Finally, if the orbit stays in the disk, its logarithmic
positive part is bounded while $D_n\to\infty$. Its local
sequence height is therefore zero.
\end{proof}

The local estimates show how a common escaping prefix controls differences between sequence heights. When $c_{\phi,v}=c_{\psi,v}$, the nonarchimedean contributions agree after such a prefix, while the archimedean errors decrease as its image grows. When $c_{\phi,v}\ne c_{\psi,v}$, the one-map estimates give the local gap used in the positive-dimension argument below.

\begin{proof}[Proof of Theorem~\ref{thm:main-2}\textup{(i)}]
Fix $z\in\overline{\mathbb Q}$. Enlarging $K$ if necessary,
assume that $z\in K$. Condition~\eqref{eq:zero-1} remains
valid at every place of the extension. Write $c_v$ for the
common value.

Choose the radii $R_v$ as in
Lemma~\ref{lem:local-height}. At each archimedean place,
increase $R_v$ so that $R_v\ge2$ and
\[
|\theta(y)|_v\ge |y|_v^{3/2}
\qquad
\bigl(\theta\in\{\phi,\psi\},\ |y|_v>R_v\bigr).
\]
This is possible because both polynomials have degree
at least two.

Only finitely many places contribute, uniformly in the sequence. Outside the archimedean places and the finite places where $z$ or a coefficient is not integral, both maps preserve the unit disk containing $z$. Group the sequences according to the contributing places where they escape and a common finite prefix $u$ after all these escapes have occurred. There are countably many such groups.

Within each group, Lemma~\ref{lem:local-sequence-height}
shows that the local heights agree at every finite place:
they are determined by the common prefix after escape,
and are zero otherwise. Thus the finite-place contributions
cancel when comparing two heights in the same group.
We need only estimate the archimedean differences.

Fix one group. If no archimedean place escapes, all global heights in this group agree. Otherwise, nonescaping archimedean places contribute zero. After $n$ further maps,
the growth estimate gives
\[
|\omega_{|u|+n}(z)|_v
\ge 2^{(3/2)^n}
\]
at every escaping archimedean place. By
Lemma~\ref{lem:local-sequence-height}, sequences in this
group sharing these $n$ further maps have global heights
in an interval of diameter at most
\[
A\,2^{-(3/2)^n},
\]
where $A>0$ is independent of $n$ and the continuation.

There are at most $2^n$ such extensions. For every $s>0$,
their covering cost is at most
\[
2^n\left(A\,2^{-(3/2)^n}\right)^s
\longrightarrow0.
\]
Thus the heights in each group form a set of Hausdorff dimension zero. Since there are countably many groups, the set of all sequence heights also has Hausdorff dimension zero.

Finally, the sequence-height map is continuous by the
cylinder estimate, and periodic sequences are dense in
the compact sequence space. Its image is therefore
$\widehat X_{\phi,\psi}(z)$, proving part~\textup{(i)} of Theorem~\ref{thm:main-2}.
\end{proof}

\begin{proof}[Proof of Theorem~\ref{thm:main-2}\textup{(ii)}]
For $\theta\in\{\phi,\psi\}$, let
$\widehat h_{\theta,v}$ denote the local height associated
to the constant sequence $(\theta,\theta,\ldots)$.

Apply Lemmas~\ref{lem:local-height-at-periodic-critical-point} and~\ref{lem:local-height-at-periodic-critical-point-archimedean} at $\alpha=\infty$, with $r=s=1$, $\widetilde\alpha=(1,0)$ and the normalized polynomial lifts. Since $\lambda_\theta=a_\theta$ and $\chi=1/t$, we obtain
\begin{equation}\label{eq:positive-2}
\widehat h_{\theta,v}(y)
=
\log|y|_v+c_{\theta,v}
+
\begin{cases}
0,&v\text{ nonarchimedean},\\
O(|y|_v^{-1}),&v\text{ archimedean},
\end{cases}
\end{equation}
for sufficiently large $|y|_v$. The implied constant
depends only on $\phi,\psi$ and $v$.

Since $\delta_v\ne0$, we may choose $R\ge R_v$ so that
\begin{equation}\label{eq:positive-5}
\left|
\widehat h_{\phi,v}(y)-\widehat h_{\psi,v}(y)
\right|
\ge\frac{|\delta_v|}{2}
\qquad\text{with }|y|_v>R.
\end{equation}

Fix a point $z$ satisfying the hypotheses. The growth
estimate in Lemma~\ref{lem:local-height} shows that
$|y|_v>R$ for every $y\in\mathcal O(z)$.

At every other place $v'$, the iterates of any such $y$
under either individual generator remain in
$\mathcal O(z)$. The boundedness hypothesis therefore gives
\begin{equation}\label{eq:positive-6}
\widehat h_{\phi,v'}(y)
=
\widehat h_{\psi,v'}(y)
=0.
\end{equation}

Only finitely many places contribute to the local
decomposition \eqref{eq:global-local-decomposition}, uniformly over the orbit and its further
iterates: outside the archimedean places and the finite
places where $z$ or a coefficient is not integral, both
maps preserve the unit disk. Consequently,
\begin{equation}\label{eq:positive-7}
\widehat h_\theta(y)
=
\frac{n_v}{[K:\mathbb Q]}\widehat h_{\theta,v}(y)
\qquad
\text{with }\theta\in\{\phi,\psi\},\ y\in\mathcal O(z).
\end{equation}

Combining~\eqref{eq:positive-5}--\eqref{eq:positive-7},
we obtain
\begin{equation}\label{eq:positive-8}
\Delta_*(z)
=
\inf_{y\in\mathcal O(z)}
\left|\widehat h_\phi(y)-\widehat h_\psi(y)\right|
\ge
\frac{n_v|\delta_v|}{2[K:\mathbb Q]}
>0.
\end{equation}
Lemma~\ref{lem:orbitwise-gap} now gives the conclusion.

More explicitly, let $B$ be the constant from
Lemma~\ref{lem:sequence-bound}, and let $s_0>0$ satisfy
\[
d_\phi^{-s_0}+d_\psi^{-s_0}=1.
\]
Choose an integer $j\ge1$ such that
\[
2B\left(d_\phi^{-j}+d_\psi^{-j}\right)
<
\frac{n_v|\delta_v|}{2[K:\mathbb Q]}.
\]
Then
\[
\dim_H\widehat X_{\phi,\psi}(z)\ge\frac{s_0}{j}>0.
\]
Both $R$ and this positive lower bound can be chosen
independently of the admissible starting point $z$.
\end{proof}
\begin{corollary}\label{cor:intermediate-dimension}
Let \(\phi,\psi\in\mathbb Z[t]\) have positive leading coefficients
\(a_\phi,a_\psi\) and degrees \(d_\phi,d_\psi\ge2\). Suppose that
\[
\frac1{d_\phi}+\frac1{d_\psi}<1,
\qquad
\frac{\log a_\phi}{d_\phi-1}\ne
\frac{\log a_\psi}{d_\psi-1}.
\]
Then there exists \(R>0\) such that, for every integer \(x\ge R\),
\[
0<\dim_H\widehat X_{\phi,\psi}(x)\le s_0<1,
\qquad d_\phi^{-s_0}+d_\psi^{-s_0}=1.
\]
\end{corollary}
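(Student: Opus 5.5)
The corollary combines Theorem~\ref{thm:main-2}\textup{(ii)}, applied at the archimedean place of \(K=\mathbb Q\), with the degree upper bound of Proposition~\ref{prop:hausdorff-upper-bound}.

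For the upper bound, Proposition~\ref{prop:hausdorff-upper-bound} gives \(\dim_H\widehat X_{\phi,\psi}(x)\le s_0\). The function \(s\mapsto d_\phi^{-s}+d_\psi^{-s}\) is strictly decreasing. Its value at \(s=1\) is \(1/d_\phi+1/d_\psi<1\), and its value at \(s_0\) equals \(1\). Hence \(s_0<1\).

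For the lower bound, take \(K=\mathbb Q\) and let \(v=\infty\) be the usual absolute value. Since \(a_\phi,a_\psi>0\), we have \(|a_\theta|_\infty=a_\theta\). Therefore \(\delta_\infty=\log a_\phi/(d_\phi-1)-\log a_\psi/(d_\psi-1)\ne0\) by hypothesis.

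Theorem~\ref{thm:main-2}\textup{(ii)} then supplies \(R>0\) such that \(\dim_H\widehat X_{\phi,\psi}(z)>0\) for every \(z\in\mathbb Q\) with \(|z|_\infty>R\), provided the whole orbit is bounded at every other place. The other places are the primes \(p\). For an integer \(x\), the coefficients of \(\phi\) and \(\psi\) lie in \(\mathbb Z\), so every point of \(\mathcal O(x)\) is an integer. Hence \(|y|_p\le1\) for all \(y\in\mathcal O(x)\) and all primes \(p\), and the boundedness hypothesis holds with bound \(1\).

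To match the statement, replace \(R\) by \(R+1\). Then every integer \(x\ge R+1\) satisfies \(|x|_\infty>R\), and Theorem~\ref{thm:main-2}\textup{(ii)} applies to it.

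No real obstacle arises. The only point needing care is that positivity of the leading coefficients is what makes the archimedean \(\delta_\infty\) equal to the stated difference of real logarithms; without it, absolute values would have to be taken. The finite places need no analysis at all, because the orbit consists of integers.
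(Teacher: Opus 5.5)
Your proposal is correct and follows the paper's proof: Theorem~\ref{thm:main-2}\textup{(ii)} is applied at the archimedean place of \(\mathbb Q\), with boundedness at every prime because the orbit of an integer under integer polynomials stays in the \(p\)-adic unit disk. The upper bound then comes from Proposition~\ref{prop:hausdorff-upper-bound}, and \(s_0<1\) from strict monotonicity of \(s\mapsto d_\phi^{-s}+d_\psi^{-s}\). Your replacement of \(R\) by \(R+1\), which passes from the condition \(|x|_\infty>R\) to \(x\ge R\), is a harmless detail that the paper leaves implicit.
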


\begin{proof}
At every finite place, both polynomials preserve the unit disk.
The entire orbit of an integer is therefore bounded there.
Theorem~\ref{thm:main-2}\textup{(ii)}, applied at the archimedean
place of \(\mathbb Q\), gives positive dimension for every sufficiently
large positive integer \(x\). The upper bound follows from
Proposition~\ref{prop:hausdorff-upper-bound}. Finally, the function
\(s\mapsto d_\phi^{-s}+d_\psi^{-s}\) is strictly decreasing,
has value \(2\) at \(s=0\), and has value less than \(1\) at \(s=1\).
Consequently, \(0<s_0<1\).
\end{proof}

\subsection{Rational cases}\label{subsec:rational-cases}

We apply Theorem~\ref{thm:main-1} to a rational family that is not conjugate to a polynomial family by a m\"{o}bius transformation.

\begin{proposition}\label{prop:rational-cubic-spectrum}
Let \(a,b\ge3\) be distinct integers, and put
\[
\theta_c(t)=\frac{ct^2}{1+(c-3)t^2+2t^3}
\]
with $c\in\{a,b\}$. For every prime \(p\mid2ab\), choose an integer
\[
m_p>\max\{v_p(a),v_p(b),0\},
\qquad
L=\prod_{p\mid2ab}p^{m_p}.
\]
Then, for every positive integer \(T\) and
\(x=1/(1+LT)\),
\[
\dim_H\widehat X_{\theta_a,\theta_b}(x)
=\rho:=\frac{\log2}{\log3},
\qquad
0<\mathcal H^\rho\bigl(\widehat X_{\theta_a,\theta_b}(x)\bigr)<\infty.
\]
\end{proposition}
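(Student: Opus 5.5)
The plan is to find an exact "base-$3$ digit expansion" for the sequence heights at $x$, and then run the mass-distribution argument of Lemma~\ref{lem:orbitwise-gap} with block length $j=1$. Using $j=1$ is what yields the sharp exponent $\rho=s_0=\log2/\log3$ rather than $s_0/j$. The upper bounds are immediate. Both maps have degree $3$, so Proposition~\ref{prop:hausdorff-upper-bound} gives $\dim_H\le\rho$. Moreover, its covering sum at $s=\rho$ equals $(2B)^\rho(2\cdot3^{-\rho})^n=(2B)^\rho$ for every $n$, with diameters tending to $0$. Hence $\mathcal H^\rho(\widehat X(x))\le(2B)^\rho<\infty$. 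Everything therefore rests on the lower bound $\mathcal H^\rho>0$.

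First I record the dynamics. For $\theta_c=ct^2/Q_c(t)$ with $Q_c(t)=1+(c-3)t^2+2t^3$, the points $0$ and $1$ are common superattracting fixed points: $\theta_c(1)=1$, and a direct computation gives $\theta_c'(1)=0$. Also $c t^2\le Q_c(t)$ is equivalent to $(1-t)^2(1+2t)\ge0$, so $[0,1]$ is invariant and $\theta_c(t)\le ct^2$. I use the lift $\widetilde\theta_c(X,Y)=(cX^2Y,\;Y^3+(c-3)X^2Y+2X^3)$. At $\widetilde\alpha=(1,1)$ it has eigenvalue $\lambda=c$, and at $(0,1)$ eigenvalue $1$. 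I then check the three kinds of places.
\begin{itemize}
\item At primes $p\nmid 2ab$ the lift should have good reduction; I will verify that the resultant is $\pm$ a product of powers of $2$ and $c$.
\item At $p\mid2ab$ we have $|x-1|_p\le p^{-m_p}$. I expect the condition $m_p>v_p(a),v_p(b)$ to be exactly what makes the disk $|t-1|_p\le p^{-m_p}$ invariant under both maps, with $b_c(P)=\log\|\widetilde\theta_c(\widetilde P)\|_p-3\log\|\widetilde P\|_p$ constant, equal to $\log|c|_p$, on it (Lemma~\ref{lem:local-height-at-periodic-critical-point}). Verifying this invariance is a Taylor expansion at $t=1$, and it is where the hypothesis on $m_p$ enters.
\item At the archimedean place, $0<x\le1/(1+L)$ is small, and the orbit tends to $0$ doubly exponentially.
\end{itemize}

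Next I redo the telescoping of Lemma~\ref{lem:local-height-at-periodic-critical-point} for mixed sequences $\omega=(\theta_{c_1},\theta_{c_2},\dots)$, with $t_0=x$ and $t_i=\omega_i(x)$. At $p\mid2ab$ this gives exactly
\[
\widehat h_{\omega,p}(\widetilde x)=\log\|\widetilde x\|_p+\sum_{i\ge1}3^{-i}\log|c_i|_p .
\]
At $\infty$, since $Y>X>0$ along the orbit, $b=\log Q_{c_i}(t_{i-1})$. At good primes the correction is $0$. Summing via \eqref{eq:global-local-decomposition}, and using $\sum_{p\mid 2ab}\log|c|_p=-\log c$, I expect
\[
\widehat h_\omega(x)=h(x)+\sum_{i\ge1}3^{-i}\bigl(-\log c_i+\varepsilon_i\bigr),
\qquad \varepsilon_i=\log Q_{c_i}(t_{i-1}),
\]
with $|\varepsilon_i|\ll t_{i-1}^2\le(1+L)^{-2}$, uniformly in $\omega$. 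The same identity holds from any orbit point $y=u(x)$, because the disks and the interval are invariant.

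Now suppose $\omega,\omega'$ share a prefix $u$ of length $i-1$ and then differ. Their heights differ by
\[
3^{-i}\Bigl(|\log(b/a)|-\textstyle\sum_{k\ge1}3^{-k}|\log(b/a)|-O\bigl((1+L)^{-2}\bigr)\Bigr)
\ge 3^{-i}\bigl(\tfrac12\log\tfrac43-O(L^{-2})\bigr)=:\gamma\,3^{-i},
\]
where $|\log(b/a)|\ge\log\frac43$ because $a,b\ge3$ are distinct. The tail sum over $k$ contributes at most half of the leading digit gap, which is where the factor $\tfrac12$ comes from. Since $L\ge 2\cdot3=6$, I would check numerically that the error term is smaller than $\tfrac12\log\frac43$, enlarging the explicit bound on $\varepsilon_i$ if necessary. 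This separation is exactly \eqref{eq:orbit-separation} with $j=1$ and $\deg u=3^{i-1}$, without invoking the constant $B$. The rest of the proof of Lemma~\ref{lem:orbitwise-gap} then goes through verbatim: the weights $3^{-\rho|u|}$ satisfy \eqref{eq:prefix-sum}, there are no isolated points by Lemma~\ref{lem:sequence-no-isolated-points}, and compactness applies. This gives $\mathcal H^\rho(\widehat X(x))\ge c\gamma^\rho>0$, hence $\dim_H=\rho$.

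I expect the main obstacle to be the exact $p$-adic step: showing that $m_p>\max\{v_p(a),v_p(b)\}$ makes the disk around $1$ invariant under both maps, and that $\log\|\widetilde\theta_c(\widetilde P)\|_p-3\log\|\widetilde P\|_p$ is exactly $\log|c|_p$ there, including for $p=2$ and for $p$ dividing only one of $a,b$. I would attack this by writing $t=1+w$ with $|w|_p\le p^{-m_p}$, expanding $\widetilde\theta_c(1+w,1)=(c+O(w),\,c+O(w^2)+\dots)$ with explicit polynomial coefficients, and comparing valuations term by term. A secondary issue is the precise archimedean error constant, since the margin $\tfrac12\log\tfrac43$ is not large.
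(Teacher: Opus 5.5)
Your architecture is the paper's. You use the same lift, an invariant locus made of a small real interval near $0$ together with the disks $1+p^{m_p}\mathbb Z_p$, exact $p$-adic defects $\log|c|_p$, and the telescoped expansion with archimedean residuals $\log Q_{c_i}(t_{i-1})\ge0$. You then take separation at scale $3^{-|u|}$ and the mass-distribution argument with $j=1$. Your upper bound $(2B)^\rho$, in place of the paper's $C^\rho$, is fine. The $p$-adic step you flag as the main obstacle is short. The identities $Q_c(1+w)=c+2cw+(c+3)w^2+2w^3$ and $\theta_c(y)-1=-(y-1)^2(2y+1)/Q_c(y)$ give $v_p(Q_c(y))=v_p(c)$ and $v_p(\theta_c(y)-1)\ge 2m_p-v_p(c)>m_p$.

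The genuine gap is the separation constant.

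\begin{itemize}
\item \emph{The gap bound is false.} The inequality $|\log(b/a)|\ge\log\frac43$ fails already for $a=4$, $b=5$. For $a<b$ the leading digit gap is only bounded below by $\log(1+1/a)\ge 1/N$, with $N=\max(a,b)$. So the archimedean error must be beaten by $\frac12\log(b/a)$, a quantity of size $1/N$, not by a fixed constant.
\item \emph{The recorded bound on $L$ is too weak.} The bound $L\ge6$ cannot control this error. The implied constant in $\log Q_c(t)\ll t^2$ is about $c$. From $x\le 1/7$ alone you get neither a small $\varepsilon_1$ nor the monotonicity $t_{i-1}\le t_0$ you use; for instance $\theta_{100}(1/7)\approx0.68>1/7$.
\end{itemize}

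What is missing is the archimedean role of the hypothesis on $m_p$. It forces $2\operatorname{lcm}(a,b)\mid L$, hence $L\ge 2N$ and $x<r=1/(2N)$. On $(0,r]$ one has $0<\theta_c(y)\le cy^2\le y/2$. One also has $0\le\log Q_c(y)\le\varepsilon_{\mathrm{cub}}=\log\bigl(1+(N-3)r^2+2r^3\bigr)<1/(4N)$. So for sequences sharing a prefix of length $i-1$ and then differing, the difference of heights satisfies
\[
\bigl|\widehat h_\omega(x)-\widehat h_{\omega'}(x)\bigr|
\ge 3^{-i}\Bigl(\tfrac12\log\tfrac ba-\tfrac32\varepsilon_{\mathrm{cub}}\Bigr)
>3^{-i}\Bigl(\tfrac1{2N}-\tfrac3{8N}\Bigr)=\frac{3^{-i}}{8N}.
\]
The margin is only a constant factor, so the numerical check has to be carried out at this $1/N$ scale, not against $\frac12\log\frac43$. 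With this $\gamma$ in place of yours, the rest of your argument goes through as in the paper.
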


\begin{proof}
Use the fixed global lifts
\[
\widetilde\theta_c(X,Y)=\bigl(cX^2Y,\ Y^3+(c-3)X^2Y+2X^3\bigr).
\]
Their resultants have absolute value \(2c^3\).
Both \(0\) and \(1\) are critical fixed points, and
\[
\widetilde\theta_c(0,1)=(0,1),\qquad \widetilde\theta_c(1,1)=c(1,1).
\]
Choosing center \(0\) at the real place and center \(1\) at
every prime dividing \(2ab\), the constant in
Theorem~\ref{thm:main-1} is
\[
\Lambda
=\frac12\sum_{p\mid2ab}\bigl(\log|a|_p-\log|b|_p\bigr)
=-\frac12\log(a/b)\ne0.
\]
We give uniform estimates on an explicit invariant set to obtain
the stated points and the exact dimension.

Put \(N=\max(a,b)\), \(r=1/(2N)\), and
\[
\varepsilon_{\mathrm{cub}}=\log\bigl(1+(N-3)r^2+2r^3\bigr)<\frac1{4N}.
\]
Consider rational points satisfying
\[
0<y\le r,\qquad
y\in1+p^{m_p}\mathbb Z_p\quad(p\mid2ab).
\tag{*}\label{eq:cubic-invariant-locus}
\]
The integer \(L\) is divisible by \(2\operatorname{lcm}(a,b)\),
so \(L\ge2N\). Consequently every stated \(x\) satisfies
\eqref{eq:cubic-invariant-locus}. In particular, no further lower
bound on the positive integer \(T\) is required.

Write \(B_c(t)=1+(c-3)t^2+2t^3\). For real \(0<y\le r\),
\[
0<\theta_c(y)\le cy^2\le y/2.
\]
At a selected finite place, the identities
\[
\begin{split}
B_c(1+t)&=c+2ct+(c+3)t^2+2t^3,\\
\theta_c(y)-1&=-\frac{(y-1)^2(2y+1)}{B_c(y)}
\end{split}
\]
show that
\[
v_p(B_c(y))=v_p(c),\qquad
v_p(\theta_c(y)-1)\ge2m_p-v_p(c)>m_p.
\]
Thus both maps preserve the set in
\eqref{eq:cubic-invariant-locus}.

For \(y\) in this set, its representative \((y,1)\) has
norm one at the real place and the selected finite places.
At the real place, \(B_c(y)\ge1>cy^2\), so
\[
\log\|\widetilde\theta_c(y,1)\|_\infty-3\log\|(y,1)\|_\infty
=\log B_c(y)\in[0,\varepsilon_{\mathrm{cub}}].
\]
At a selected prime the corresponding defect is
\(-v_p(c)\log p\). At every other finite place the fixed lift
has integral coefficients and unit resultant, so its norm
defect is zero for every nonzero vector. Summing the defects
in the global height formula gives
\begin{equation}\label{eq:cubic-height-recurrence}
h(\theta_c(y))=3h(y)-\log c+e_c(y),
\qquad 0\le e_c(y)\le \varepsilon_{\mathrm{cub}}.
\end{equation}
At the remaining places, the unchanged norm contributions cancel in the height difference.

For an infinite sequence with parameter letters
\(c_i\in\{a,b\}\), telescoping
\eqref{eq:cubic-height-recurrence} gives
\[
\widehat h_\omega(y)
=h(y)-\sum_{i\ge1}\frac{\log c_i}{3^i}+R_\omega(y),
\qquad 0\le R_\omega(y)\le \varepsilon_{\mathrm{cub}}/2.
\]
Assume \(a<b\), interchanging the labels if necessary.
The model values beginning with \(a\) lie in
\[
\left[-\frac{\log a}{3}-\frac{\log b}{6},
      -\frac{\log a}{2}\right],
\]
whereas those beginning with \(b\) lie in
\[
\left[-\frac{\log b}{2},
      -\frac{\log b}{3}-\frac{\log a}{6}\right].
\]
Their distance is \(\log(b/a)/6\). The residuals lie in
the same interval \([0,\varepsilon_{\mathrm{cub}}/2]\), so the actual first-letter
spectra are separated by at least
\[
\gamma=\frac{|\log(a/b)|}{6}-\frac{\varepsilon_{\mathrm{cub}}}{2}
>\frac1{24N}>0.
\]
Here \(|\log(a/b)|\ge1/N\). Any two sequence heights at
the same \(y\) differ by at most
\[
C=\frac{|\log(a/b)|}{2}+\frac{\varepsilon_{\mathrm{cub}}}{2}.
\]
Since the entire orbit stays in the invariant set, prefix
scaling shows that sequences first differing after a prefix
\(u\) satisfy
\begin{equation}\label{eq:cubic-coding-separation}
\gamma\,3^{-|u|}
\le
|\widehat h_\omega(x)-\widehat h_{\omega'}(x)|
\le C\,3^{-|u|}.
\end{equation}

The sequence-height image equals the completed finite-composition spectrum by continuity and density of periodic sequences.

Apply the covering argument in the proof of Lemma~\ref{lem:orbitwise-gap} to the preceding separation estimate, using the numbers
\[
2^{-|u|}=3^{-\rho|u|}.
\]
This gives
\[
\mathcal H^\rho\!\left(\widehat X_{\theta_a,\theta_b}(x)\right)>0.
\]
On the other hand, the \(2^n\) cylinders of length \(n\) have diameter at most \(C3^{-n}\). Since
\[
2^n(C3^{-n})^\rho=C^\rho,
\]
we obtain
\[
0<\mathcal H^\rho\!\left(\widehat X_{\theta_a,\theta_b}(x)\right)\le C^\rho<\infty.
\]
Consequently, the Hausdorff dimension is \(\rho\).

\end{proof}

The maps in Proposition~\ref{prop:rational-cubic-spectrum} are not M\"obius-conjugate to polynomials. Indeed, the derivative numerator of \(\theta_c\) is
\(2ct(1-t^3)\). The four zeros of this numerator are simple and are not poles;
the poles are simple, and infinity has local degree one. Thus every local degree is at most two. A cubic polynomial
has local degree three at infinity, and local degree is
preserved by M\"obius conjugacy.

For example, take \(a=3\), \(b=4\), \(m_2=3\), and
\(m_3=2\). Then \(L=72\), so \(T=1\) gives
\[
\phi(t)=\frac{3t^2}{1+2t^3},\qquad
\psi(t)=\frac{4t^2}{1+t^2+2t^3},\qquad x=\frac1{73}.
\]
Here \(r=1/8\), \(\varepsilon_{\mathrm{cub}}=\log(261/256)\), and
\(\Lambda=\tfrac12\log(4/3)\). The completed spectrum
has dimension \(\log2/\log3\).

\section{Orbit counting}\label{sec:orbit-counting}

We count distinct orbit points by ordinary height. Let \(K\) be a
number field containing \(\phi,\psi\) and \(P\), and write
\[
N_P(X)=\#\{Q\in\mathcal O(P):h(Q)\le X\}.
\]
The orbit includes \(P\). Different words may give the same map,
the same point, or the same height. None of the arguments below
requires these evaluations to be injective. Northcott's theorem
makes \(N_P(X)\) finite.

Let \(s_0>0\) satisfy
\[
d_\phi^{-s_0}+d_\psi^{-s_0}=1,
\qquad d_{\max}=\max\{d_\phi,d_\psi\}.
\]
Choose \(B>0\) as in Lemma~\ref{lem:sequence-bound}.
For \(T>1\), let \(W_T\) consist of the formal words whose degrees
first reach \(T\): every proper initial prefix has degree less
than \(T\). Words are counted separately even if they induce the
same rational map. For a finite word \(u\), put
\[
D_u=\deg u,\qquad
I_u(P)=\left[
\frac{h(u(P))-B}{D_u},
\frac{h(u(P))+B}{D_u}
\right].
\]
The number \(h(u(P))/D_u\) is the normalized center of this interval.

\begin{lemma}\label{lem:stopping-center-cover}
Choose \(B>0\) as in Lemma~\ref{lem:sequence-bound}. For \(T>1\), the intervals
\[
I_u(P)=
\left[
\frac{h(u(P))-B}{\deg u},
\frac{h(u(P))+B}{\deg u}
\right],
\qquad u\in W_T,
\]
cover \(\widehat X_{\phi,\psi}(P)\).

Among intervals with the same normalized center, retain only a largest one, and let \(M_T\) denote the number of retained intervals. Their union is unchanged, and
\[
M_T\le(d_{\max}T)^{s_0},
\qquad
\operatorname{diam}I_u(P)\le\frac{2B}{T}.
\]
Consequently, \(\widehat X_{\phi,\psi}(P)\) has a cover by \(O(\delta^{-s_0})\) intervals of diameter at most \(\delta\) as \(\delta\to0^+\). If \(\mathcal H^{s_0}(\widehat X_{\phi,\psi}(P))>0\), then
\[
M_T\asymp T^{s_0}.
\]
\end{lemma}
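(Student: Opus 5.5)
The plan has four parts: covering, a Kraft-type count, the $O(\delta^{-s_0})$ cover, and the lower bound under positive measure.

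\emph{Covering.} Every infinite sequence $\omega$ has a unique prefix in $W_T$: take the first $n$ with $\deg\omega_n\ge T$. This holds because degrees are at least $2$, so $\deg\omega_n\to\infty$. Hence the words of $W_T$ form a finite prefix code covering sequence space. By Lemma~\ref{lem:cylinder-containment}, each sequence height $\widehat h_\omega(P)$ lies in $I_u(P)$ for its stopping prefix $u$. As noted in the proof of Theorem~\ref{thm:main-2}(i), the sequence-height map is continuous with image $\widehat X_{\phi,\psi}(P)$. Therefore the finitely many closed intervals $I_u(P)$, $u\in W_T$, cover the completed spectrum. Intervals with the same center are nested, so keeping the largest one does not change the union. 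The diameter bound is immediate: $\operatorname{diam}I_u(P)=2B/D_u\le 2B/T$ since $D_u\ge T$.

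\emph{Bounding $M_T$.} Clearly $M_T\le\#W_T$. I will run the mass-distribution identity \eqref{eq:prefix-sum} without the $d_\phi^{s_0}$ normalization. Assign to each word $u$ the weight $D_u^{-s_0}$. The empty word has weight $1$, and weights are additive under one-letter extension because $d_\phi^{-s_0}+d_\psi^{-s_0}=1$. Since $W_T$ is a complete finite prefix code, the same extension argument as in the proof of Lemma~\ref{lem:orbitwise-gap} gives $\sum_{u\in W_T}D_u^{-s_0}=1$. For $u\in W_T$ with last letter $\theta$, the parent has degree less than $T$, so $D_u<d_\theta T\le d_{\max}T$. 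Each term is therefore at least $(d_{\max}T)^{-s_0}$, and so $\#W_T\le(d_{\max}T)^{s_0}$.

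\emph{The $O(\delta^{-s_0})$ cover.} Given $\delta$, set $T=2B/\delta$. Then we get at most $(2Bd_{\max})^{s_0}\delta^{-s_0}$ intervals of diameter at most $\delta$.

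\emph{Lower bound when $\mathcal H^{s_0}>0$.} The upper bound $M_T=O(T^{s_0})$ is already proved. For the reverse, I use the retained intervals as a cover in the definition of $\mathcal H^{s_0}_{2B/T}$. Each has diameter at most $2B/T$, which gives
\[
\mathcal H^{s_0}_{2B/T}\bigl(\widehat X_{\phi,\psi}(P)\bigr)\le M_T(2B/T)^{s_0}.
\]
The left side is nondecreasing as $T\to\infty$ and at least $\mathcal H^{s_0}_{\delta_0}$ for fixed $\delta_0$. When the measure is positive, some $\mathcal H^{s_0}_{\delta_0}$ is positive. Thus for large $T$ the left side is bounded below by a positive constant $c$, so $M_T\ge c(2B)^{-s_0}T^{s_0}$. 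For small $T$ we can absorb into constants, since $M_T\ge1$.

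The only delicate point is the covering claim, which needs the image of the sequence-height map to be the whole closure. I expect this to be the main obstacle, though a short one. If I wanted to avoid citing that, I would argue directly: each finite word $w$ is a prefix of some sequence, or of an iterate as in Proposition~\ref{prop:hausdorff-upper-bound}. So each height $\widehat h_w(P)$ with $|w|$ large lies in some $I_u(P)$ with $u\in W_T$. The finite union of closed intervals is closed, which handles closure points.
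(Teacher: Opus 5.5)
Your proposal is correct and follows the paper's proof essentially step for step. The steps match one by one: the stopping-time cover via Lemma~\ref{lem:cylinder-containment}, the weight identity $\sum_{u\in W_T}D_u^{-s_0}=1$ combined with $T\le D_u<d_{\max}T$, the choice $T=2B/\delta$, and the lower bound $\mathcal H^{s_0}_{\delta_0}\le M_T(2B/T)^{s_0}$ for large $T$. Your fallback covering argument also works and avoids citing continuity of the sequence-height map: it places each $\widehat h_w(P)$ in some $I_u(P)$, after replacing $w$ by an iterate, and then uses that the finite union of closed intervals is closed.
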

\begin{proof}
Every infinite sequence has a first prefix in \(W_T\). The sequence-height description and Lemma~\ref{lem:cylinder-containment} therefore show that the intervals \(I_u(P)\), \(u\in W_T\), cover \(\widehat X_{\phi,\psi}(P)\).

For every finite word \(u\),
\[
\begin{aligned}
&\bigl(\deg(\phi\circ u)\bigr)^{-s_0}
+\bigl(\deg(\psi\circ u)\bigr)^{-s_0}\\
&\qquad=(\deg u)^{-s_0}
\bigl(d_\phi^{-s_0}+d_\psi^{-s_0}\bigr)
=(\deg u)^{-s_0}.
\end{aligned}
\]
Start with the single term \(1\), corresponding to the empty word of degree \(1\). Whenever a word has degree less than \(T\), replace its term by the two terms obtained by appending \(\phi\) and \(\psi\). The sum remains unchanged. Since both generator degrees are at least \(2\), this process terminates after finitely many steps and gives
\[
\sum_{u\in W_T}(\deg u)^{-s_0}=1.
\]
By the definition of \(W_T\),
\[
T\le\deg u<d_{\max}T.
\]
Hence
\[
1\ge\#W_T\,(d_{\max}T)^{-s_0},
\]
so
\[
\#W_T\le(d_{\max}T)^{s_0}.
\]

Intervals with the same center are nested, so retaining only a largest one preserves their union. Thus
\[
M_T\le\#W_T\le(d_{\max}T)^{s_0}.
\]
Every retained interval has diameter at most \(2B/T\). Taking \(T=2B/\delta\) proves the covering estimate.

Finally, suppose \(\mathcal H^{s_0}(\widehat X_{\phi,\psi}(P))>0\). Choose \(c_0,\delta_0>0\) such that
\[
\mathcal H^{s_0}_{\delta_0}(\widehat X_{\phi,\psi}(P))\ge c_0.
\]
For sufficiently large \(T\), the retained intervals have diameter at most \(\delta_0\), and therefore
\[
c_0
\le
\sum_{\text{retained }I_u(P)}
\bigl(\operatorname{diam}I_u(P)\bigr)^{s_0}
\le
M_T\left(\frac{2B}{T}\right)^{s_0}.
\]
Together with the upper bound, this gives
\[
M_T\asymp T^{s_0}.
\]
\end{proof}

We use the following consequence of Bell--Hindes--Zhong~\cite[Theorem~1.5]{BellHindesZhongApplications}.
\begin{lemma}\label{lem:bhz-orbit-upper-bound}
Let \(K\) be a number field, let \(\phi,\psi\in K(t)\) have degrees \(d_\phi,d_\psi\ge2\), and let \(P\in\mathbb P^1(K)\). Let \(s_0>0\) satisfy
\[
d_\phi^{-s_0}+d_\psi^{-s_0}=1.
\]
Then
\[
N_P(X)=\#\{Q\in\mathcal O(P):h(Q)\le X\}
\ll X^{s_0},
\]
where \(h\) denotes the absolute logarithmic Weil height and the implicit constant may depend on \(\phi,\psi,P\).
\end{lemma}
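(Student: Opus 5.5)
We would prove this directly with a height-escape argument, which avoids having to match the normalizations in \cite[Theorem~1.5]{BellHindesZhongApplications}. The idea is that once an orbit point has height above a fixed threshold, every further generator multiplies the shifted height \(h-B\) by at least its degree. Counting orbit points of height at most \(X\) then reduces to counting formal words of degree at most \(X\), and that count is \(\ll X^{s_0}\) by the self-similar recursion behind Lemma~\ref{lem:stopping-center-cover}.

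\emph{Step 1 (expansion above \(B\)).} With \(B\) as in Lemma~\ref{lem:sequence-bound}, for \(\theta\in\{\phi,\psi\}\) and any \(R\) we have \(C_\theta\le(d_\theta-1)B\), so
\[
h(\theta(R))-B\ \ge\ d_\theta h(R)-C_\theta-B\ \ge\ d_\theta\bigl(h(R)-B\bigr).
\]
In particular, if \(h(R)\ge B+1\) then \(h(\theta(R))\ge B+1\) as well. Iterating, \(h(w(R))-B\ge \deg w\,(h(R)-B)\ge \deg w\) for every word \(w\) and every such \(R\).

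\emph{Step 2 (finite launching set).} Let \(F=\{Q\in\mathcal O(P):h(Q)<B+1\}\), which is finite by Northcott since \(\mathcal O(P)\subseteq\mathbb P^1(K)\). Put \(E=F\cup\phi(F)\cup\psi(F)\), also finite. Take any \(Q\in\mathcal O(P)\setminus F\) and write \(Q=u(P)\). Along the successive prefixes of \(u\), consider the last point lying in \(F\); such a point exists because \(P\in F\), or else \(Q\) is itself in \(E\) trivially. The next point is some \(R'\in E\setminus F\), so \(h(R')\ge B+1\). Hence \(Q=w(R')\) for a word \(w\) with \(R'\in E\) and \(h(R')\ge B+1\). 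By Step 1, \(h(Q)\le X\) forces \(\deg w\le X-B\le X\). Therefore
\[
N_P(X)\ \le\ \#F+\#E\cdot N(X),\qquad N(T)\coloneqq\#\{w\ \text{formal word}:\deg w\le T\}.
\]
Since each orbit point is counted at least once, distinctness and injectivity of word evaluations play no role.

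\emph{Step 3 (word count).} Splitting off the last letter gives \(N(T)=1+N(T/d_\phi)+N(T/d_\psi)\) for \(T\ge1\), and \(N(T)=0\) for \(T<1\). We show \(N(T)\le AT^{s_0}-1\) for \(T\ge1\) with \(A\ge 2d_{\max}^{s_0}\), by induction on \(\lfloor\log_2T\rfloor\).
\begin{itemize}
\item When both \(T/d_\phi\) and \(T/d_\psi\) are at least \(1\), the recursion and \(d_\phi^{-s_0}+d_\psi^{-s_0}=1\) give \(N(T)\le 1+AT^{s_0}-2=AT^{s_0}-1\).
\item When one of the arguments is below \(1\), the corresponding term vanishes, and \(T<d_{\max}\). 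Here \(N(T)\le 1+N(T/d_{\min})\), which the choice of \(A\) absorbs; this is routine.
\end{itemize}
Thus \(N(X)\ll X^{s_0}\), and hence \(N_P(X)\ll X^{s_0}\), with constants depending on \(\phi,\psi,P\) through \(B\), \(\#F\) and \(\#E\).

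The only delicate point is the base case of the induction in Step 3. Everything else is formal: Step 1 is a one-line rearrangement of the defining bound for \(C_\theta\), and Step 2 uses only Northcott's theorem. As a consistency check, the result also follows from \cite[Theorem~1.5]{BellHindesZhongApplications}, whose exponent for two generators is the root \(s_0\) of \(d_\phi^{-s}+d_\psi^{-s}=1\).
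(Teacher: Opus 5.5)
Your argument is correct in substance, but Step~2 contains a slip. If \(h(P)\ge B+1\), then Step~1 shows that no point of \(\mathcal O(P)\) lies in \(F\). So \(F=E=\varnothing\), and your displayed inequality would read \(N_P(X)\le 0\). The alternative ``\(Q\) is itself in \(E\)'' is false in that case. The repair is immediate: put \(P\) into \(E\). Alternatively, observe that in this case \(h(u(P))-B\ge\deg u\) for every word \(u\), so \(N_P(X)\le N(X-B)\).

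With this change Steps~1--3 are sound. The base case you call routine does go through with \(A\ge 2d_{\max}^{s_0}\). If \(T/d_{\min}\ge1>T/d_{\max}\), the induction hypothesis together with \(T\ge1\) gives
\[
N(T)=1+N(T/d_{\min})\le A(T/d_{\min})^{s_0}=AT^{s_0}-AT^{s_0}d_{\max}^{-s_0}\le AT^{s_0}-2.
\]
If both quotients are below \(1\), then \(N(T)=1\le A-1\).

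The paper argues differently. It applies \cite[Theorem~1.5]{BellHindesZhongApplications} with \(V=\mathbb P^1\) and \(\mathcal L=\mathcal O(1)\) when \(\phi\ne\psi\). It treats \(\phi=\psi\) separately through the one-map canonical-height estimate, which gives \(O(\log X)\).

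Your route is self-contained. It uses only the one-sided bound \(h(\theta(R))\ge d_\theta h(R)-C_\theta\), Northcott's theorem, and the weight identity \(d_\phi^{-s_0}+d_\psi^{-s_0}=1\) that already drives Lemma~\ref{lem:stopping-center-cover}. Because it counts formal words, it needs neither injectivity of evaluations nor a separate case \(\phi=\psi\). The implied constant is explicit in \(B\), \(\#F\), \(\#E\) and \(d_{\max}\).

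The citation is shorter and comes from a framework valid for general \(V\) and \(\mathcal L\). However, it requires matching that theorem's hypotheses and normalizations, which your escape argument avoids on \(\mathbb P^1\).
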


\begin{proof}
For distinct maps, apply the cited theorem with
\(V=\mathbb P^1\) and \(\mathcal L=\mathcal O(1)\).
If \(\phi=\psi\), its orbit is finite or has \(O(\log X)\)
points of logarithmic height at most \(X\), by the usual canonical
height estimate; see Silverman~\cite[Section~3.4]{Silverman2007}.
This also gives the stated bound.
\end{proof}

We begin with a lower bound that applies to every positive
spectrum dimension, without assuming positive Hausdorff measure
at \(s_0\).
\begin{proposition}\label{prop:orbit-counting-lower}
Let \(\phi,\psi\in\overline{\mathbb Q}(t)\) have degrees at least
two, and let \(P\in\mathbb P^1(\overline{\mathbb Q})\). Suppose that
\[
\delta=\dim_H\widehat X_{\phi,\psi}(P)>0,
\qquad
N_P(X)=\#\{y\in\mathcal O(P):h(y)\le X\}.
\]
For every \(0<s<\delta\), there exists \(c_s>0\) such that
\[
N_P(X)\ge c_sX^s\qquad(X\text{ sufficiently large}).
\]
In particular,
\[
\liminf_{X\to\infty}\frac{\log N_P(X)}{\log X}\ge\delta.
\]
If also \(\mathcal H^\delta(\widehat X_{\phi,\psi}(P))>0\), then
\[
N_P(X)\ge c\frac{X^\delta}{\log X}
\qquad(X\text{ sufficiently large}).
\]
Under this additional measure hypothesis, if
\(d_\phi,d_\psi\) are multiplicatively dependent, the lower bound
improves to \(N_P(X)\ge cX^\delta\) for sufficiently large \(X\).
\end{proposition}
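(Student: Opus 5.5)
The plan is to compare the completed spectrum with normalized heights of orbit points. Lemma~\ref{lem:cylinder-containment} says every point of \(\widehat X^u(P)\) lies within \(B/\deg u\) of \(h(u(P))/\deg u\). Fix \(T>1\) and use the stopping family \(W_T\) from Lemma~\ref{lem:stopping-center-cover}, which gives \(T\le \deg u<d_{\max}T\). Each \(u\in W_T\) yields an orbit point \(Q=u(P)\) and an interval \(I_u(P)\) of length \(2B/\deg u\le 2B/T\) centered at \(h(Q)/\deg u\); these intervals cover \(\widehat X(P)\). Since \(\widehat X(P)\) is bounded, say contained in \([0,H_0]\), the centers are bounded, so \(h(Q)\le (H_0+B)\deg u\le (H_0+B)d_{\max}T\). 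Every orbit point produced at level \(T\) therefore has height at most \(X:=c_1T\).

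The key step is to count how many intervals of size about \(1/T\) are needed, against how many distinct orbit points produce them. Distinct words can give the same point \(Q\) but with different degrees \(\deg u\). All these degrees lie in \([T,d_{\max}T)\) and have the form \(d_\phi^{i}d_\psi^{k}\). For a fixed \(Q\), the intervals attached to it have centers \(h(Q)/D\) with \(D\) ranging over admissible degrees in that window. In the dependent case, \(d_\phi,d_\psi\) are powers of a common base, so only \(O(1)\) values of \(D\) fall in the window. In general the number of pairs \((i,k)\) with \(d_\phi^id_\psi^k\in[T,d_{\max}T)\) is \(O(\log T)\), since \(i\) determines \(k\) up to \(O(1)\) choices. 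Hence \(\#\{u(P):u\in W_T\}\) is at least the minimal number of retained intervals, i.e.\ \(M_T\), divided by \(O(\log T)\), or by \(O(1)\) in the dependent case.

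Next I would bound \(M_T\) from below using the dimension. If \(s<\delta\), then \(\mathcal H^s(\widehat X(P))=\infty\), so for \(T\) large any cover by sets of diameter at most \(2B/T\) has \(\sum(\operatorname{diam})^s\ge 1\). Thus \(M_T(2B/T)^s\ge1\), i.e.\ \(M_T\gg T^s\). Combining with the previous step gives \(N_P(c_1T)\gg T^s/\log T\). Since \(s<\delta\) is arbitrary, choosing \(s'\in(s,\delta)\) absorbs the logarithm and yields \(N_P(X)\ge c_sX^s\), and the \(\liminf\) statement follows. Under \(\mathcal H^\delta>0\), the same argument at exponent \(\delta\) gives \(M_T\gg T^\delta\). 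This yields \(N_P(X)\gg X^\delta/\log X\) in general and \(\gg X^\delta\) in the dependent case. Monotonicity of \(N_P\) lets us pass from \(X=c_1T\) to all large \(X\).

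I expect the main obstacle to be the multiplicity step: justifying that a single orbit point accounts for at most \(O(\log T)\) (resp.\ \(O(1)\)) of the retained intervals. Retained intervals are indexed by distinct centers \(h(Q)/D\), so I must check carefully that for fixed \(Q\) the center determines \(D\). It does when \(h(Q)>0\). The points with \(h(Q)=0\) all produce centers at \(0\), which gives only one interval, so they cause no difficulty. I will also need the boundedness of \(\widehat X(P)\), which follows from Lemma~\ref{lem:cylinder-containment} applied to a one-letter prefix.
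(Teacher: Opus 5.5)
Your proposal is correct and follows the paper's proof essentially step for step. You cover the spectrum by the stopping intervals \(I_u(P)\), \(u\in W_T\), and bound the number of distinct intervals by the number of orbit points of height \(O(T)\) times the number of admissible degrees in the window. That degree count is \(O(\log T)\) in general and \(O(1)\) in the dependent case. You then compare with the \(s\)-dimensional (respectively \(\delta\)-dimensional) Hausdorff content at scale \(2B/T\), and choose \(s'\in(s,\delta)\) to absorb the logarithm.

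The remaining differences are cosmetic: you bound \(h(u(P))\) through the boundedness of \(\widehat X(P)\) rather than the finite telescoping estimate. Also, the injectivity check you flag as the main obstacle is not needed, since the number of distinct centers is automatically at most the number of distinct pairs \((u(P),\deg u)\).
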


\begin{proof}
Fix a number field containing the maps and \(P\). Its orbit
remains in this field, so Northcott's theorem makes every
\(N_P(X)\) finite. Let \(B\) be the constant in
Lemma~\ref{lem:sequence-bound}. Since \(B=0\) would make
every sequence height equal to \(h(P)\), we have \(B>0\).
Put \(\delta=\dim_H\widehat X_{\phi,\psi}(P)>0\).

By Lemma~\ref{lem:stopping-center-cover}, the intervals \(I_u(P)\),
\(u\in W_T\), cover the completed spectrum and have diameter at
most \(2B/T\). From the finite telescoping calculation in the proof
of Lemma~\ref{lem:sequence-bound},
\[
h(u(P))\le D_u(h(P)+B)\le C_0T,
\qquad C_0=d_{\max}(h(P)+B)>0.
\]

Each interval is determined by \((u(P),\deg u)\).
There are only \(O(\log T)\) possible degrees in the
stopping window. Indeed, every degree is
\(d_\phi^k d_\psi^\ell\), with \(k,\ell\ge0\), and
\[
\log T\le k\log d_\phi+\ell\log d_\psi
<\log T+\log d_{\max}.
\]
There are \(O(\log T)\) choices of \(k\), and for each
one the allowable \(\ell\) lie in an interval of fixed
length. Thus, after removing repeated pairs, at most
\(C_1(\log T)N_P(C_0T)\) intervals are needed for large
\(T\). Repeated evaluations have already been accounted
for in this estimate.

Fix \(0<q<\delta\). Some fixed sufficiently small-scale
\(q\)-dimensional Hausdorff content of the spectrum is
bounded below by a positive constant \(c_q\).
Applying it to these covers for large \(T\) gives
\[
c_q\le C_1(\log T)N_P(C_0T)(2B/T)^q.
\]
Replacing \(T\) by \(X/C_0\), we obtain
\[
N_P(X)\ge c'_q X^q/\log X
\qquad(X\text{ sufficiently large}).
\]
Given \(s<\delta\), choose \(s<q<\delta\). Since
\(X^{q-s}/\log X\to\infty\), the first conclusion
follows. Taking lower logarithmic growth exponents and
letting \(s\to\delta\) gives the liminf assertion.

If the critical-dimensional Hausdorff measure is positive,
the same argument applies at \(q=\delta\), giving the
stated critical bound. Finally, multiplicative dependence
makes \(\log d_\phi\) and \(\log d_\psi\) integer
multiples of one positive real number. The stopping window
then contains only a bounded number of distinct log-degrees,
independently of \(T\). The degree-counting factor
\(\log T\) is replaced by a constant, proving the last
assertion under the same critical-measure hypothesis.
\end{proof}

For the multiplicative Weil height \(H=e^h\), the first
bound is equivalently
\[
\#\{y\in\mathcal O(P):H(y)\le Z\}
\ge c_s(\log Z)^s
\qquad(Z\text{ sufficiently large}).
\]

Combining Proposition~\ref{prop:orbit-counting-lower} with Lemma~\ref{lem:bhz-orbit-upper-bound} gives

\[
\dim_H\widehat X_{\phi,\psi}(P)
\le\liminf_{X\to\infty}\frac{\log N_P(X)}{\log X}
\le\limsup_{X\to\infty}\frac{\log N_P(X)}{\log X}
\le s_0
\]
whenever the spectrum has positive dimension. In particular, if its dimension is $s_0$, the logarithmic orbit-counting exponent exists and equals $s_0$.

\begin{proof}[Proof of Theorem~\ref{thm:main-3}\textup{(i)}]
Write \(d_\phi=D^r\) and \(d_\psi=D^s\) for integers \(D\ge2\) and \(r,s\ge1\). Every composition degree is a power of \(D\). Therefore the interval
\[
T\le\deg u<d_{\max}T=D^{\max\{r,s\}}T
\]
contains at most \(\max\{r,s\}\) possible degree values, independently of \(T\).

Choose \(B>0\) as in Lemma~\ref{lem:sequence-bound}, and put
\[
C_0=d_{\max}(h(P)+B)>0.
\]
For \(u\in W_T\), the calculation in the proof of that lemma gives
\[
h(u(P))\le(\deg u)(h(P)+B)\le C_0T.
\]
Let \(M_T\) be the number of retained intervals in Lemma~\ref{lem:stopping-center-cover}. Each normalized center is of the form \(h(Q)/\deg u\), where \(Q=u(P)\) has height at most \(C_0T\). A fixed \(Q\) contributes at most \(\max\{r,s\}\) such centers. Hence
\[
M_T\le\max\{r,s\}\,N_P(C_0T).
\]
By Lemma~\ref{lem:stopping-center-cover}, the positive-measure hypothesis gives \(M_T\gg T^{s_0}\). Thus \(N_P(C_0T)\gg T^{s_0}\), and putting \(X=C_0T\) yields \(N_P(X)\gg X^{s_0}\). The opposite bound follows from Lemma~\ref{lem:bhz-orbit-upper-bound}.
\end{proof}

For independent degrees, we need a uniform estimate for rotations.
The following is the uniform-phase consequence of the
Erd\H{o}s--Tur\'an inequality; see
Kuipers--Niederreiter~\cite[Chapter~2, Lemma~3.2, pp.~122--123]{KuipersNiederreiter1974}.
\begin{lemma}\label{lem:rotation-discrepancy}
Let \(\alpha\) be irrational and \(x\in\mathbb R\). For a positive integer \(M\), define
\[
D_M(x,\alpha)
=
\sup_{0\le a<b\le1}
\left|
\frac1M
\#\{0\le k<M:\{x+k\alpha\}\in[a,b)\}
-(b-a)
\right|,
\]
where \(\{t\}=t-\lfloor t\rfloor\). Then, for every positive integer \(H\),
\[
D_M(x,\alpha)
\le
C\left(
\frac1H+
\frac1M\sum_{q=1}^{H}\frac1{q\|q\alpha\|}
\right),
\]
where
\[
\|t\|=\min_{m\in\mathbb Z}|t-m|
\]
and \(C>0\) is an absolute constant.
\end{lemma}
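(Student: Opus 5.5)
The plan is to derive the bound from the classical Erdős--Turán inequality and an explicit evaluation of the exponential sums for a rotation orbit. The phase \(x\) enters these sums only through a unimodular factor, so it drops out, and this is what makes the estimate uniform in \(x\).

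\emph{Step 1 (the Erdős--Turán inequality).} For an arbitrary finite sequence \(y_0,\ldots,y_{M-1}\) of reals, with discrepancy taken over half-open subintervals \([a,b)\subseteq[0,1)\), we have, for every positive integer \(H\),
\[
D_M\le C'\left(\frac1{H}+\sum_{q=1}^{H}\frac1q\left|\frac1M\sum_{k=0}^{M-1}e(qy_k)\right|\right),
\qquad e(t)=e^{2\pi i t},
\]
with an absolute constant \(C'\). Since the lemma is quoted from Kuipers--Niederreiter, I would take this inequality as known. If a self-contained argument were wanted, I would bound the indicator of \([a,b)\) above and below by trigonometric polynomials of degree at most \(H\) whose integrals differ from \(b-a\) by \(O(1/H)\). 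The Beurling--Selberg/Vaaler majorant and minorant do this, and their Fourier coefficients satisfy \(|\hat f(q)|\ll 1/q\) for \(1\le |q|\le H\). Averaging these polynomials over the sequence gives the inequality. This is the only genuinely analytic step, and it is where I expect the main obstacle to lie if one does not simply cite it. I do not expect any difficulty with \(b=1\) or with the half-open convention, since these affect only the constants.

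\emph{Step 2 (the rotation sums).} Put \(y_k=x+k\alpha\). Since \(e\) has period one, replacing \(y_k\) by \(\{y_k\}\) does not change the sums. For \(1\le q\le H\), the number \(q\alpha\) is not an integer because \(\alpha\) is irrational, so the geometric series gives
\[
\left|\sum_{k=0}^{M-1}e\bigl(q(x+k\alpha)\bigr)\right|
=|e(qx)|\left|\frac{e(qM\alpha)-1}{e(q\alpha)-1}\right|
\le\frac{2}{|e(q\alpha)-1|}
=\frac{1}{|\sin(\pi q\alpha)|}.
\]
Next I would use \(|\sin\pi t|\ge 2\|t\|\), which follows from concavity of \(\sin\) on \([0,\pi/2]\) together with symmetry and periodicity. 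This gives the bound \(1/(2\|q\alpha\|)\), independent of \(x\) and of \(M\).

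\emph{Step 3 (conclusion).} Substituting Step 2 into the inequality of Step 1 yields
\[
D_M(x,\alpha)\le C'\left(\frac1H+\frac1{2M}\sum_{q=1}^{H}\frac{1}{q\|q\alpha\|}\right).
\]
Taking \(C=C'\) gives the lemma. The constant is absolute, and the bound holds for every phase \(x\), every \(M\ge1\), and every \(H\ge1\).
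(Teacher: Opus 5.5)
Your proposal is correct and follows essentially the same route as the paper: you cite the Erd\H{o}s--Tur\'an inequality, observe that the phase $e^{2\pi i qx}$ is unimodular, and bound the geometric sum by a constant multiple of $1/\|q\alpha\|$. Your explicit estimate $|\sin\pi t|\ge 2\|t\|$ simply makes precise the constant the paper leaves unspecified in that last bound.
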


\begin{proof}
The Erd\H{o}s--Tur\'an inequality bounds the discrepancy by
\[
C\left(\frac1H+\sum_{q=1}^{H}\frac1q
\left|\frac1M\sum_{k=0}^{M-1}
e^{2\pi i q(x+k\alpha)}\right|\right).
\]
The phase \(e^{2\pi i qx}\) has modulus one, and the geometric sum
has absolute value at most \(C/\|q\alpha\|\). Substitution gives
the assertion, uniformly in \(x\).
\end{proof}

\begin{lemma}\label{lem:diophantine-rotation}
Let \(\alpha\) be irrational, and suppose that
\[
\|q\alpha\|\ge c q^{-\tau}
\qquad(q\ge1)
\]
for some \(c>0\) and \(\tau\ge1\). Then there is a constant \(C>0\), depending only on \(c,\tau\), such that
\[
\left|
\#\{0\le k<M:\{x+k\alpha\}\in[a,b)\}
-M(b-a)
\right|
\le C M^{\tau/(\tau+1)}
\]
for every real \(x\), every integer \(M\ge2\), and \(0\le a<b\le1\).
\end{lemma}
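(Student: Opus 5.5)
The plan is to apply Lemma~\ref{lem:rotation-discrepancy} with a cutoff \(H\) chosen to balance its two terms, and then bound \(\sum_{q\le H}1/(q\|q\alpha\|)\) using the Diophantine hypothesis. Multiplying Lemma~\ref{lem:rotation-discrepancy} by \(M\), the count error is at most
\[
C\Bigl(\frac MH+\sum_{q=1}^{H}\frac1{q\|q\alpha\|}\Bigr),
\]
uniformly in \(x,a,b\). It therefore suffices to show that the sum is \(O(H^{\tau}\log H)\), or a slightly better bound, and then to choose \(H\) in terms of \(M\).

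The main obstacle is the sum estimate. The crude bound \(1/\|q\alpha\|\le c^{-1}q^{\tau}\) gives \(\sum_{q\le H}q^{\tau-1}/c\asymp H^{\tau}\) when \(\tau>1\), and \(O(\log H)\) when \(\tau=1\). With \(H\asymp M^{1/(\tau+1)}\) this yields \(M/H+H^\tau\asymp M^{\tau/(\tau+1)}\), which is exactly the claimed exponent. For \(\tau>1\) the crude bound therefore suffices with no log loss.

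The case \(\tau=1\) needs more care, since the crude bound leaves an extra \(\log H\) factor. Here I would use the standard dyadic argument. On each block \(2^{j}\le q<2^{j+1}\), the points \(q\alpha\) modulo \(1\) are pairwise separated: by the hypothesis, \(\|(q-q')\alpha\|\ge c\,2^{-(j+1)\tau}\). Hence the \(i\)-th smallest value of \(\|q\alpha\|\) in the block is \(\gg i\,2^{-j\tau}\). This gives
\[
\sum_{2^j\le q<2^{j+1}}\frac1{\|q\alpha\|}\ll 2^{j\tau}\log 2^j,
\qquad\text{so}\qquad
\sum_{q\le H}\frac1{q\|q\alpha\|}\ll H^{\tau-1}\log H,
\]
which for \(\tau=1\) is \(O(\log^2 H)\). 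That still leaves a polylogarithmic loss against \(M^{1/2}\). The bound \(CM^{1/2}\) absorbs the logs, however, since \(\log^2 M\ll M^{1/2}\). So for \(\tau=1\) I take \(H\asymp M^{1/2}\) and obtain \(M^{1/2}+\log^2M\ll M^{1/2}\). For \(\tau>1\) the same dyadic bound, or even the crude one, gives \(H^{\tau}\) up to logs, and any log factor is harmless as long as the choice of \(H\) absorbs it. The simplest way to do this is to take \(H=\lfloor M^{1/(\tau+1)}\rfloor\), which is at least \(1\) for \(M\ge2\), and check directly that \(\sum_{q\le H}c^{-1}q^{\tau-1}\le c^{-1}H^{\tau}\). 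This holds because \(q^{\tau-1}\le H^{\tau-1}\), so no logs arise at all. Combining the two terms then yields \(C(c,\tau)M^{\tau/(\tau+1)}\).

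It remains to handle small \(M\). There the bound is trivial, because the count is at most \(M\), and adjusting the constant covers every \(M\ge2\). The constant depends only on \(c\), \(\tau\) and the absolute constant from Lemma~\ref{lem:rotation-discrepancy}, as required.
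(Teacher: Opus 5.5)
Your final argument (Lemma~\ref{lem:rotation-discrepancy}, then the crude bound $\sum_{q\le H}1/(q\|q\alpha\|)\le c^{-1}\sum_{q\le H}q^{\tau-1}\le c^{-1}H^{\tau}$, then $H=\lfloor M^{1/(\tau+1)}\rfloor$) is correct and is exactly the paper's proof. The dyadic detour for $\tau=1$ is unnecessary, and your claim that the crude bound leaves a $\log H$ factor there is mistaken: for $\tau=1$ the summand $q^{\tau-1}$ equals $1$, so the crude sum is just $H/c$ with no logarithm.
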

\begin{proof}
By Lemma~\ref{lem:rotation-discrepancy},
\[
D_M(x,\alpha)
\le C_0
\left(
\frac1H+
\frac1M\sum_{q=1}^{H}\frac1{q\|q\alpha\|}
\right),
\]
where \(C_0\) is an absolute constant. Our assumption gives
\[
\sum_{q=1}^{H}\frac1{q\|q\alpha\|}
\le
\frac1c\sum_{q=1}^{H}q^{\tau-1}
\le \frac{H^\tau}{c}.
\]
Choose
\[
H=\left\lfloor M^{1/(\tau+1)}\right\rfloor.
\]
It follows that
\[
M D_M(x,\alpha)
\le C_0\left(\frac MH+\frac{H^\tau}{c}\right)
\le C M^{\tau/(\tau+1)}.
\]
By the definition of \(D_M(x,\alpha)\), this bounds the required counting error for every interval \([a,b)\), uniformly in \(x\).
\end{proof}

The required Diophantine hypothesis follows from the next
fixed-logarithms consequence of
Matveev~\cite[Corollary~2.3, equation~(2.6), p.~1219]{Matveev2000}.
\begin{lemma}\label{lem:matveev-fixed-bases}
Fix positive algebraic numbers \(\beta_1,\ldots,\beta_r\). There are effectively computable constants \(A,C>0\), depending only on these numbers, such that, whenever
\[
\Lambda=b_1\log\beta_1+\cdots+b_r\log\beta_r\ne0,
\qquad b_j\in\mathbb Z,
\]
we have
\[
|\Lambda|\ge A H^{-C},
\qquad
H=\max\{3,|b_1|,\ldots,|b_r|\}.
\]
\end{lemma}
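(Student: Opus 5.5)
The plan is to read the lemma off Matveev's lower bound for linear forms in logarithms~\cite[Corollary~2.3]{Matveev2000}. The point is that once the numbers \(\beta_1,\ldots,\beta_r\) are fixed, every quantity in that bound except the size of the coefficients is a constant. First I will set up the data. Let \(\mathbb K=\mathbb Q(\beta_1,\ldots,\beta_r)\), regarded as a subfield of \(\mathbb R\); this is possible because every \(\beta_j\) is a positive real algebraic number. Let \(D=[\mathbb K:\mathbb Q]\), and take \(\log\beta_j\) to be the real logarithm. This is the real case treated by Matveev's Corollary~2.3, whose bound is the one displayed in equation~(2.6) there. For each \(j\), choose a real number \(A_j\) with
\[
A_j\ge\max\{D\,h(\beta_j),\ |\log\beta_j|,\ 0.16\}.
\]
These \(A_j\) depend only on the \(\beta_j\).

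Next I will apply the cited estimate. For integers \(b_j\) with \(\Lambda\ne0\), and any \(B\ge\max_j|b_j|\), it gives a bound of the shape
\[
\log|\Lambda|> -C(r,D)\,A_1\cdots A_r\,(1+\log B).
\]
Here \(C(r,D)\) is an explicit constant depending only on \(r\) and \(D\). If the cited form requires \(B\ge1\) or similar, I will check that this is satisfied. I will take \(B=H=\max\{3,|b_1|,\ldots,|b_r|\}\), which is admissible because \(H\ge\max_j|b_j|\). Then I set \(C_1=C(r,D)A_1\cdots A_r\), which depends only on the \(\beta_j\).

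Finally I will convert to a power of \(H\). Exponentiating gives
\[
|\Lambda|\ge e^{-C_1}H^{-C_1}.
\]
This is the claim with \(A=e^{-C_1}\) and \(C=C_1\). Alternatively, since \(H\ge3\) gives \(\log H\ge1\), one can bound \(1+\log H\le2\log H\) and take \(A=1\), \(C=2C_1\). Either way the constants are effective, because Matveev's constant is explicit and the heights \(h(\beta_j)\) are computable.

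The only step that needs care is matching hypotheses with the precise form of Corollary~2.3 and equation~(2.6). Matveev states his bound for nonzero algebraic \(\alpha_j\) with fixed determinations of their logarithms, in either the real or the complex setting. I need to confirm that positive real \(\beta_j\) with real logarithms fall under the real case, and that the normalisation of \(A_j\) uses the same height convention as this paper. If they do not match, I will use the complex version instead; it only changes \(C(r,D)\) by a factor depending on \(r\) and \(D\). Everything after that is bookkeeping of constants.
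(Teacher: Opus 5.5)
Your proposal is correct and takes essentially the paper's route. The paper gives no separate argument: it cites Matveev's Corollary~2.3, equation~(2.6), as a fixed-bases consequence. Your specialization supplies exactly the bookkeeping that the citation leaves implicit: fix the real field $\mathbb Q(\beta_1,\ldots,\beta_r)$ and the $A_j$, take $B=H$, and exponentiate $\log|\Lambda|>-C_1(1+\log H)$ to obtain $|\Lambda|\ge e^{-C_1}H^{-C_1}$.
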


For the next two lemmas, let $k(u)$ denote the number of $\phi$-letters
in a formal word $u$, and put
\[
p=d_\phi^{-s_0},\qquad
\mu=p\log d_\phi+(1-p)\log d_\psi.
\]
Thus $1-p=d_\psi^{-s_0}$. For $A>0$ and $T>1$, write
\[
W_T(A)=\left\{u\in W_T:
\left|k(u)-\frac{p\log T}{\mu}\right|
\le A\sqrt{\log T}\right\}.
\]
Here $D_u=\deg u$, and
\[
I_u(P)=\left[
\frac{h(u(P))-B}{D_u},\frac{h(u(P))+B}{D_u}
\right],
\]
with $B>0$ as in Lemma~\ref{lem:sequence-bound} and $W_T$ as in
Lemma~\ref{lem:stopping-center-cover}.

\begin{lemma}\label{lem:stopping-window-cost}
Let $0<a_0<b_0$, and suppose that
\[
E=\widehat X_{\phi,\psi}(P)\cap[a_0,b_0],
\qquad \mathcal H^{s_0}(E)>0.
\]
Choose $c_0,\delta_0>0$ such that
\[
\mathcal H^{s_0}_{\delta_0}(E)\ge c_0.
\]
For every $\varepsilon>0$, there is $A>0$ such that, for all
sufficiently large $T$,
\[
\sum_{u\in W_T\setminus W_T(A)}
\bigl(\operatorname{diam}I_u(P)\bigr)^{s_0}
\le\varepsilon.
\]
Let $\mathcal I_T(A)$ be the collection of distinct intervals
$I_u(P)$, with $u\in W_T(A)$, that meet $E$. Then, for all
sufficiently large $T$,
\[
\sum_{I\in\mathcal I_T(A)}(\operatorname{diam}I)^{s_0}
\ge c_0-\varepsilon.
\]
In particular, $\varepsilon=c_0/2$ gives a lower bound of $c_0/2$.
\end{lemma}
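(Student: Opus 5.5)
The plan is to view $W_T$ as the stopping set of a random walk. The identity $\sum_{u\in W_T}(\deg u)^{-s_0}=1$ from the proof of Lemma~\ref{lem:stopping-center-cover} says that the weights $(\deg u)^{-s_0}=p^{k(u)}(1-p)^{|u|-k(u)}$ form a probability distribution on $W_T$. This is the law of the first passage of a random walk. Its i.i.d.\ letters are $\phi$ with probability $p$ and $\psi$ with probability $1-p$. Its increments of $\log\deg$ are $\log d_\phi$ or $\log d_\psi$, with mean $\mu$, and the walk is stopped when $\log\deg$ first reaches $\log T$.

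Since $\operatorname{diam}I_u(P)=2B/D_u$, we have
\[
\sum_{u\in W_T\setminus W_T(A)}(\operatorname{diam}I_u(P))^{s_0}=(2B)^{s_0}\,\mathbb P\bigl(|k(u)-p\log T/\mu|>A\sqrt{\log T}\bigr).
\]
It therefore suffices to show that this stopping probability is at most $\varepsilon(2B)^{-s_0}$ for a suitable $A$ and all large $T$.

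For the probability bound, let $n=|u|$ be the stopping time. Then $\log T\le k\log d_\phi+(n-k)\log d_\psi<\log T+\log d_{\max}$. Writing $k-pn$ as a centered sum, one gets
\[
k-\frac{p\log T}{\mu}=\frac{(k-pn)\log d_\psi}{\mu}+O(1).
\]
Indeed, $\mu n=\log\deg u+(k-pn)(\log d_\psi-\log d_\phi)$, and $k-p\log T/\mu$ expands linearly in $k-pn$ with a bounded error. In the degenerate case $d_\phi=d_\psi$ the quantity $k-pn$ still appears, so the argument is unchanged.

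It remains to control $k-pn$, which is a martingale evaluated at the stopping time $n$. Because $n\le \log T/\log2+1$, Doob's maximal inequality (or Kolmogorov's inequality) gives
\[
\mathbb P\Bigl(\max_{m\le n_{\max}}|k_m-pm|>A'\sqrt{\log T}\Bigr)\le \frac{n_{\max}p(1-p)}{A'^2\log T}=O(A'^{-2}).
\]
Choosing $A$ large makes this small uniformly in $T$. This proves the first assertion, and the $O(1)$ term is absorbed for large $T$.

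For the second assertion, take $T$ so large that $2B/T\le\delta_0$. The intervals $I_u(P)$ for $u\in W_T$ cover $\widehat X_{\phi,\psi}(P)\supseteq E$ by Lemma~\ref{lem:stopping-center-cover}. Discard the intervals that do not meet $E$; keep one copy of each distinct interval among those with $u\in W_T(A)$, and keep the intervals from $W_T\setminus W_T(A)$ as they are. This is still a cover of $E$ by sets of diameter at most $\delta_0$, so
\[
c_0\le\mathcal H^{s_0}_{\delta_0}(E)\le\sum_{I\in\mathcal I_T(A)}(\operatorname{diam}I)^{s_0}+\sum_{u\in W_T\setminus W_T(A)}(\operatorname{diam}I_u(P))^{s_0}\le\sum_{I\in\mathcal I_T(A)}(\operatorname{diam}I)^{s_0}+\varepsilon,
\]
which is the second claim.

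The main obstacle is making the concentration uniform in $T$ with a stopping time rather than a fixed length. The maximal-inequality route avoids needing an optional-stopping or renewal theorem, so I would try it first. The only care needed is bookkeeping of the $O(1)$ overshoot and of the constant $(2B)^{s_0}$ when choosing $A$.
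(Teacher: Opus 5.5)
Your proof is correct and is essentially the paper's argument. The paper's quantity $R(u)=k(u)-\frac{p}{\mu}\log D_u$ is exactly your $\frac{\log d_\psi}{\mu}(k-pn)$. Its finite-tree identity $\sum_{u\in W_T}D_u^{-s_0}R(u)^2=\frac{p(1-p)(\log d_\psi)^2}{\mu^2}\sum_{u\text{ internal}}D_u^{-s_0}$ is the optional-stopping second-moment computation that you replace by Kolmogorov's maximal inequality over the deterministic horizon $n\le\log T/\log 2+1$, and both give tail weight $O(A^{-2})$. Your covering step for the second assertion is the same as the paper's.
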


\begin{proof}
Put $L=\log T$, and assume $L\ge1$. Write
\[
a=\log d_\phi,\qquad b=\log d_\psi,
\qquad R(u)=k(u)-\frac p\mu\log D_u.
\]
In this finite-tree calculation, the empty word has $D_u=1$ and
$k(u)=0$. Appending $\phi$ or $\psi$ gives
\[
R(\phi\circ u)=R(u)+\frac{(1-p)b}{\mu},
\qquad
R(\psi\circ u)=R(u)-\frac{pb}{\mu}.
\]
Since $D_{\phi\circ u}^{-s_0}=pD_u^{-s_0}$ and
$D_{\psi\circ u}^{-s_0}=(1-p)D_u^{-s_0}$, expansion yields
\[
\begin{aligned}
&D_{\phi\circ u}^{-s_0}R(\phi\circ u)^2
+D_{\psi\circ u}^{-s_0}R(\psi\circ u)^2
-D_u^{-s_0}R(u)^2\\
&\hspace{25mm}=\frac{p(1-p)b^2}{\mu^2}D_u^{-s_0}.
\end{aligned}
\]
Sum over the internal words of the finite tree ending at $W_T$.
All internal terms cancel, and the root term is zero. Thus
\[
\sum_{u\in W_T}D_u^{-s_0}R(u)^2
=\frac{p(1-p)b^2}{\mu^2}
\sum_{u\text{ internal}}D_u^{-s_0}.
\]
At depth $j$, the sum of $D_u^{-s_0}$ over all words of that
length is $(p+(1-p))^j=1$. In particular, its sum over the
internal words at that depth is at most $1$. Every stopping word
has length at most $L/\min(a,b)+1$, so
\[
\sum_{u\in W_T}D_u^{-s_0}R(u)^2\le C L.
\]
The proof of Lemma~\ref{lem:stopping-center-cover} gives
\[
\sum_{u\in W_T}D_u^{-s_0}=1,
\qquad 0\le\log D_u-L<\log d_{\max}.
\]
Consequently, for a constant $C_1$ independent of $T$,
\[
\sum_{u\in W_T}D_u^{-s_0}
\left(k(u)-\frac{pL}{\mu}\right)^2\le C_1L.
\]
For the words outside $W_T(A)$, this implies
\[
\sum_{u\in W_T\setminus W_T(A)}D_u^{-s_0}
\le\frac{C_1}{A^2}.
\]
Their original intervals therefore satisfy
\[
\sum_{u\in W_T\setminus W_T(A)}
\bigl(\operatorname{diam}I_u(P)\bigr)^{s_0}
\le\frac{(2B)^{s_0}C_1}{A^2}.
\]
Choose $A$ so that this is at most $\varepsilon$.

By Lemma~\ref{lem:stopping-center-cover}, all the stopping intervals
cover the full completed spectrum, and hence cover $E$. Discarding
retained intervals not meeting $E$ and removing repeated retained
intervals does not affect this cover of $E$ when the excluded
intervals are kept. For sufficiently large $T$, all these intervals
have diameter at most $2B/T\le\delta_0$. Thus
\[
\begin{aligned}
c_0
&\le \sum_{I\in\mathcal I_T(A)}(\operatorname{diam}I)^{s_0}
+\sum_{u\in W_T\setminus W_T(A)}
\bigl(\operatorname{diam}I_u(P)\bigr)^{s_0}\\
&\le \sum_{I\in\mathcal I_T(A)}(\operatorname{diam}I)^{s_0}
+\varepsilon.
\end{aligned}
\]
This proves the second assertion.
\end{proof}

\begin{lemma}\label{lem:height-interval-multiplicity}
Let $E$ be as in Lemma~\ref{lem:stopping-window-cost}. Suppose that
$d_\phi,d_\psi$ are multiplicatively independent and that
\[
\alpha=\frac{\log d_\phi}{\log d_\psi},
\qquad \|q\alpha\|\ge c_\alpha q^{-\tau}\quad(q\ge1)
\]
for some $c_\alpha>0$ and $\tau\ge1$. Put
\[
\eta=\frac{1-s_0}{2(\tau+1)}.
\]
For every fixed $A>0$, there are constants $C,T_0>0$ such that
\[
\#\left\{D_u:
u\in W_T(A),\ h(u(P))=H,\ I_u(P)\cap E\ne\varnothing
\right\}
\le C(\log T)^{1/2-\eta}
\]
for every real $H$ and every $T\ge T_0$. Equivalently, this bounds
the number of distinct retained intervals meeting $E$ that arise
from a fixed ordinary height $H$. The constants are uniform in $H$.
\end{lemma}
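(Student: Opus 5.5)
The plan is to turn the count into a lattice-point count for the exponents of \(D_u\), and then into a rotation count controlled by Lemma~\ref{lem:diophantine-rotation}. Put \(L=\log T\), \(a=\log d_\phi\), \(b=\log d_\psi\). Every \(u\in W_T(A)\) has \(D_u=d_\phi^{k}d_\psi^{\ell}\) with \(k=k(u)\). Because the degrees are multiplicatively independent, distinct values of \(D_u\) correspond to distinct pairs \((k,\ell)\). So it suffices to bound the number of pairs \((k,\ell)\) that can occur for a fixed \(H\).

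\emph{Step 1: constraints on \((k,\ell)\).} Three constraints apply.
\begin{enumerate}
\item The stopping condition gives \(L\le ka+\ell b<L+\log d_{\max}\).
\item Membership in \(W_T(A)\) confines \(k\) to an interval of length \(M\asymp 2A\sqrt L\) around \(pL/\mu\).
\item Since \(I_u(P)\) meets \(E\subseteq[a_0,b_0]\) and has radius \(B/D_u\le B/T\), the center \(H/D_u\) lies within \(B/T\) of \(E\). Hence \(\log D_u\) lies in a window \(J_H\) of length \(O(1)\) determined by \(H\), uniformly in \(H\).
\end{enumerate}
For a fixed \(k\), the condition \(ka+\ell b\in J\) for a window \(J=[y,y+w)\) is equivalent to
\[
\Bigl\{\tfrac{y}{b}-k\alpha\Bigr\}\in\bigl[1-\tfrac{w}{b},1\bigr)
\]
(or the analogous arc), and then it determines \(\ell\) up to \(O(1+w/b)\) choices. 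Thus counting admissible \(k\) is a count of visits of the rotation \(x-k\alpha\) to an arc of length \(w/b\), over \(M\) consecutive values of \(k\).

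\emph{Step 2: applying the rotation estimate.} By Lemma~\ref{lem:diophantine-rotation}, with phase \(x\) depending on \(H\) and the starting \(k\), the number of admissible \(k\) in a window of length \(w\) is at most
\[
Mw/b+C M^{\tau/(\tau+1)},
\]
uniformly in the phase and hence in \(H\). The hypothesis \(\|q\alpha\|\ge c_\alpha q^{-\tau}\) is exactly what that lemma needs. With \(w\) of order one this gives only the trivial bound \(O(\sqrt L)\). The gain must therefore come from replacing \(J_H\) by a much shorter window, or a bounded number of them. Suppose we can restrict to windows of length \(w=L^{-\beta}\). Choosing \(\beta=(1-s_0)/(2(\tau+1))\) balances \(\sqrt L\,w\) against \(L^{\tau/(2(\tau+1))}\), and both terms are then \(O(L^{1/2-\eta})\) with \(\eta=(1-s_0)/(2(\tau+1))\), as claimed.

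\emph{Step 3: the short window (main obstacle).} Distinct admissible \(D_u\) have centers \(H/D_u\) that are distinct points of the \(B/T\)-neighbourhood of \(E\). Moreover, \(\log(H/D_u)\) ranges over \(\log H-(ka+\ell b)\). The plan is to partition \(J_H\) into \(L^{\beta}\) subwindows of length \(L^{-\beta}\). We would then use the fact that \(E\) has dimension at most \(s_0<1\) (Proposition~\ref{prop:hausdorff-upper-bound}, via the cover of Lemma~\ref{lem:stopping-center-cover} at an intermediate scale \(L^{-\beta}\)) to show that only \(O(1)\) subwindows, or \(O(L^{\beta s_0})\) of them with compensating weight, can contain centers near \(E\). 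This is where the factor \(1-s_0\) in \(\eta\) should enter. Summing Step 2 over these subwindows then gives
\[
O\bigl(L^{\beta s_0}(\sqrt L\,L^{-\beta}+L^{\tau/(2(\tau+1))})\bigr),
\]
and the exponent of \(\beta\) would be re-optimized accordingly.

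I expect this step, converting ``meets \(E\)'' into a short-window constraint uniform in \(H\), to be the real difficulty. If the intermediate-scale covering does not give it directly, the fallback is to exploit the cylinder structure. Intervals \(I_u(P)\) meeting \(E\) at a given location share a long common prefix, by Lemma~\ref{lem:cylinder-containment}, and the prefix's degree pins \(\log D_u\) to within the last few letters. I would try to use that to force the short window.
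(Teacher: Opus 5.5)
Your Step 3, taken in its second form, is exactly the paper's argument, and re-optimizing your displayed bound at $\beta=1/(2(\tau+1))$ gives precisely $L^{\beta s_0}\bigl(L^{1/2-\beta}+L^{\tau/(2(\tau+1))}\bigr)\ll L^{1/2-\eta}$. That second form covers $\log E$ at scale $\delta=L^{-\beta}$ (equivalently $\delta=M^{-1/(\tau+1)}$) by the $O(\delta^{-s_0})$ intervals coming from Lemma~\ref{lem:stopping-center-cover}, applies the phase-uniform rotation bound to each piece, and sums. So the step you flag as the obstacle goes through directly: the $B/T$-closeness of $H/D_u$ to $E$ is far finer than $\delta$, the hoped-for $O(1)$ subwindows (false in general) and the cylinder fallback are not needed, and the case $H\le 0$ is vacuous for large $T$ because $E\subseteq[a_0,b_0]$ with $a_0>0$.
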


\begin{proof}
Write $L=\log T$, $s=s_0$, $a=\log d_\phi$, $b=\log d_\psi$, and
\[
\gamma=\frac1{\tau+1}.
\]
Multiplicative independence implies $0<s<1$. On $\mathbb R/\mathbb Z$,
put
\[
F=\left\{\frac{\log t}{b}\pmod1:t\in E\right\}.
\]
This set is compact. By Lemma~\ref{lem:stopping-center-cover}, the
full completed spectrum has a cover by at most $C\delta^{-s}$
intervals of diameter at most $\delta$, for every sufficiently
small $\delta>0$. Intersect these intervals with $[a_0,b_0]$.
The map $t\mapsto(\log t)/b$ is Lipschitz there, so its images,
reduced modulo $1$, cover $F$ by at most $C\delta^{-s}$ closed arcs
of length at most $C_1\delta$.

For $0<\rho\le\delta$, let
\[
F_\rho=\{y\in\mathbb R/\mathbb Z:
\operatorname{dist}(y,F)\le\rho\}.
\]
Enlarge each of the preceding closed arcs by an open neighborhood
of radius $2\delta$. For sufficiently small $\delta$, the resulting
open arcs have length at most $(C_1+4)\delta$ and cover the closed
set $F_\rho$. Each open arc is contained in at most two half-open
intervals in $[0,1)$, with the same total length, after splitting
at $0$ if necessary.

Apply Lemma~\ref{lem:diophantine-rotation} to these intervals.
The lemma also applies to $-\alpha$, since
$\|q(-\alpha)\|=\|q\alpha\|$. Its uniformity in the initial real
parameter permits any block of $M$ consecutive integers. Thus,
for every real $x$ and every such block,
\[
\#\{k:\{x-k\alpha\}\in F_\rho\}
\le C\delta^{-s}\bigl(M\delta+M^{1-\gamma}\bigr).
\]
Taking $\delta=M^{-\gamma}$ gives, whenever
$\rho\le M^{-\gamma}$ and $M$ is sufficiently large,
\[
\#\{k:\{x-k\alpha\}\in F_\rho\}
\le CM^{1-\gamma(1-s)}.
\]

If $H\le0$, then the relevant set in the statement is empty for
large $T$, because the right endpoint of its interval would be
at most $B/T<a_0$. Suppose $H>0$, and take a word counted in that
set. Choose $t\in I_u(P)\cap E$. Then
\[
\left|\frac H{D_u}-t\right|\le\frac BT.
\]
For large $T$, both quantities inside the logarithm lie in
$[a_0/2,b_0+1]$. In particular,
\[
\left|\frac{\log(H/D_u)}b-\frac{\log t}b\right|
\le\frac{2B}{a_0bT}.
\]
Write $D_u=d_\phi^k d_\psi^\ell$. The logarithmic normalized center
has the exact coordinate
\[
\frac{\log(H/D_u)}b
=\frac{\log H}b-k\alpha-\ell.
\]
Consequently, with $\rho=2B/(a_0bT)$,
\[
\left\{\frac{\log H}b-k\alpha\right\}\in F_\rho.
\]
Thus all these logarithmic normalized centers belong to the open
enlarged arcs constructed above.

The restriction defining $W_T(A)$ places all possible $k$ in a
block of $M$ consecutive integers, where we may take
\[
M=\lceil2A\sqrt L\rceil+3.
\]
There are constants $c_A,C_A>0$ such that
$c_A\sqrt L\le M\le C_A\sqrt L$ for large $T$. Also
$\rho\le M^{-\gamma}$ for large $T$. The preceding rotation
estimate, with $x=(\log H)/b$, therefore bounds the number of
possible $k$ by
\[
CM^{1-\gamma(1-s)}
\le C L^{[1-\gamma(1-s)]/2}
=C L^{1/2-\eta}.
\]
For a fixed $k$, the stopping condition gives
\[
L\le ka+\ell b<L+\log d_{\max}.
\]
It permits at most $\lceil(\log d_{\max})/b\rceil+1$ integers
$\ell$, independently of $T,k,H$. This proves the claimed bound
for distinct degrees. For fixed $H$ and $B>0$, a degree $D$
determines the interval, and different degrees give different
radii. Hence the same bound counts the distinct intervals.
All estimates are uniform in $H$.
\end{proof}

\begin{proof}[Proof of Theorem~\ref{thm:main-3}\textup{(ii)}]
Put \(\alpha=\log d_\phi/\log d_\psi\).
Lemma~\ref{lem:matveev-fixed-bases}, applied to
\(q\log d_\phi-n\log d_\psi\) with \(n\) nearest to \(q\alpha\),
gives constants \(c_\alpha>0\) and \(\tau\ge1\) such that
\[
\|q\alpha\|\ge c_\alpha q^{-\tau}\qquad(q\ge1).
\]
Indeed, multiplicative independence makes the linear form nonzero,
\(|n|\le q\alpha+1/2\), and
\[
\|q\alpha\|
=\frac{|q\log d_\phi-n\log d_\psi|}{\log d_\psi}.
\]
We may take
\[
\eta=\frac{1-s_0}{2(\tau+1)}>0.
\]
Here \(s_0<1\), since independent integer degrees cannot both be
\(2\). Choosing \(c_\alpha,\tau\) by the cited lemma makes
\(\eta\) depend only on the degrees. More generally, any such
\(c_\alpha,\tau\) give this value of \(\eta\), with the lower-bound
constant and the sufficiently-large-\(X\) threshold also allowed
to depend on that choice.

The completed spectrum is nonnegative and is bounded by
Lemma~\ref{lem:sequence-bound}; being closed, it is compact.
Since $s_0>0$, the singleton $\{0\}$ has zero
$\mathcal H^{s_0}$-measure. Countable subadditivity therefore gives
$0<a_0<b_0$ such that
\[
E=\widehat X_{\phi,\psi}(P)\cap[a_0,b_0]
\]
has positive $\mathcal H^{s_0}$-measure. Choose $c_0,\delta_0>0$
with $\mathcal H^{s_0}_{\delta_0}(E)\ge c_0$.

Apply Lemma~\ref{lem:stopping-window-cost} with
$\varepsilon=c_0/2$, and fix the resulting $A$. For each real
$H$, let $m_H(T)$ count the distinct intervals in
$\mathcal I_T(A)$ arising from height $H$. By
Lemma~\ref{lem:height-interval-multiplicity},
\[
m_H(T)\le C(\log T)^{1/2-\eta}
\]
uniformly in $H$ for all sufficiently large $T$.
The finite-composition estimate in the proof of
Lemma~\ref{lem:sequence-bound} gives, for $u\in W_T$,
\[
h(u(P))\le D_u(h(P)+B)\le C_0T,
\qquad C_0=d_{\max}(h(P)+B)>0.
\]
Let
\[
R(T)=\#\{h(Q):Q\in\mathcal O(P),\ h(Q)\le C_0T\}.
\]
Every height contributing to $\mathcal I_T(A)$ is among these
values. Since the stopping intervals have diameter at most $2B/T$,
we obtain
\[
\begin{aligned}
\frac{c_0}{2}
&\le\sum_{I\in\mathcal I_T(A)}(\operatorname{diam}I)^{s_0}\\
&\le\left(\frac{2B}{T}\right)^{s_0}\sum_Hm_H(T)\\
&\le\left(\frac{2B}{T}\right)^{s_0}
R(T)\,C(\log T)^{1/2-\eta}.
\end{aligned}
\]
Here the sum over $H$ is over the finitely many heights arising
from the retained stopping words. Rearranging and putting
$X=C_0T$ proves the lower bound for distinct ordinary height
values, and hence the lower bound for $N_P(X)$.
The upper bound follows from
\cite[Theorem~1.5]{BellHindesZhongApplications}.
\end{proof}

In the cubic family we can count one full layer of the
orbit and obtain matching bounds.

\begin{corollary}\label{cor:cubic-orbit-count}
Retain the hypotheses and the point \(x\) of
Proposition~\ref{prop:rational-cubic-spectrum}. Put
\[
N=\max(a,b),\quad r=\frac1{2N},\quad
\varepsilon_{\mathrm{cub}}=\log\bigl(1+(N-3)r^2+2r^3\bigr),\quad
\rho=\frac{\log2}{\log3},
\]
and
\[
c_-=h(x)-\tfrac12\log N>0,
\qquad c_+=h(x)+\varepsilon_{\mathrm{cub}}/2.
\]
Then, for \(X\ge c_+\),
\[
\frac12c_+^{-\rho}X^\rho
\le N_x(X)\le2c_-^{-\rho}X^\rho.
\]
In particular,
\[
N_x(X)\asymp X^\rho.
\]
\end{corollary}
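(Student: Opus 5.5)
The plan is to work layer by layer. For a word \(w=\theta_{c_n}\circ\cdots\circ\theta_{c_1}\) of length \(n\), I will iterate the exact recurrence \eqref{eq:cubic-height-recurrence} from the proof of Proposition~\ref{prop:rational-cubic-spectrum}. This is legitimate because the whole orbit of \(x\) stays in the invariant set \eqref{eq:cubic-invariant-locus}. Iterating gives
\[
h(w(x))=3^n h(x)-\sum_{i=1}^{n}3^{n-i}\log c_i+\sum_{i=1}^{n}3^{n-i}e_{c_i}(y_{i-1}),
\qquad 0\le e_{c_i}\le\varepsilon_{\mathrm{cub}},
\]
where \(y_{i-1}\) is the \((i-1)\)-st point along the orbit.

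The two sums are bounded as follows. The error sum lies in \([0,\varepsilon_{\mathrm{cub}}(3^n-1)/2]\). The coefficient sum \(\sum_i 3^{n-i}\log c_i\) lies between \(0\) and \(\tfrac12(3^n-1)\log N\). Hence every orbit point in layer \(n\) satisfies
\[
3^n c_-\le h(w(x))\le 3^n c_+ ,
\]
where \(c_-=h(x)-\tfrac12\log N\) is positive by hypothesis. (If needed, I would check \(c_->0\) directly: \(h(x)=\log(1+LT)\ge\log(2N+1)\).)

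Next I show that each layer contains exactly \(2^n\) distinct points. Suppose two distinct words \(w\ne w'\) of length \(n\) satisfied \(w(x)=w'(x)\). Then for any continuation \(\omega\), the sequence heights would satisfy \(\widehat h_{\omega\circ w}(x)=\widehat h_{\omega\circ w'}(x)\), because these heights depend only on the point \(w(x)\) and the tail \(\omega\) (both prefixes have degree \(3^n\)). But the two sequences first differ after some prefix \(u\) of length less than \(n\). The lower bound in \eqref{eq:cubic-coding-separation} then forces their heights to differ by at least \(\gamma 3^{-|u|}>0\), a contradiction. So the \(2^n\) words of length \(n\) give \(2^n\) distinct orbit points.

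For the lower bound, let \(X\ge c_+\) and let \(n\ge0\) be maximal with \(3^n c_+\le X\). Every point in layer \(n\) then has height at most \(X\), so \(N_x(X)\ge 2^n=(3^n)^\rho\). Maximality gives \(3^{n+1}c_+>X\), hence
\[
N_x(X)\ge 2^n=(3^{n+1})^\rho/2>\tfrac12 c_+^{-\rho}X^\rho .
\]

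For the upper bound, every orbit point lies in some layer. A point in layer \(n\) with height at most \(X\) forces \(3^n c_-\le X\). Let \(n_1\) be the largest such \(n\). Overcounting points that appear in several layers is harmless here, so
\[
N_x(X)\le\sum_{n\le n_1}2^n<2^{n_1+1}=2(3^{n_1})^\rho\le 2c_-^{-\rho}X^\rho .
\]

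The only delicate step is the distinctness within a layer. It is here that the coding separation \eqref{eq:cubic-coding-separation} replaces any injectivity assumption on the word evaluations. The remaining steps are bookkeeping with the telescoped recurrence.
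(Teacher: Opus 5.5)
Your proposal is correct and matches the paper's proof step for step. You iterate the height recurrence to get $c_-3^n\le h(u(x))\le c_+3^n$, and you use the coding separation with a common infinite tail to show that the $2^n$ words of length $n$ give distinct points. You then take $n=\lfloor\log_3(X/c_+)\rfloor$ for the lower bound and sum $2^n$ over the admissible lengths for the upper bound. Your explicit telescoped formula and the check $h(x)=\log(1+LT)\ge\log(2N+1)$ simply spell out details the paper states briefly.
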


\begin{proof}
The coding separation in
\eqref{eq:cubic-coding-separation} implies that distinct
length-\(n\) words give distinct points at \(x\).
Indeed, if \(u(x)=u'(x)\) and \(|u|=|u'|=n\),
appending the same infinite tail \(\omega\) gives
\[
\widehat h_{\omega\circ u}(x)
=3^{-n}\widehat h_\omega(u(x))
=3^{-n}\widehat h_\omega(u'(x))
=\widehat h_{\omega\circ u'}(x),
\]
contrary to the separation of their distinct sequences.
Thus exactly \(2^n\) distinct points occur at depth \(n\).

Since \(L\ge2N\),
\(h(x)=\log(1+LT)>\tfrac12\log N\), so \(c_->0\).
Iterating \eqref{eq:cubic-height-recurrence} gives
\[
c_-3^n\le h(u(x))\le c_+3^n
\qquad(|u|=n).
\]
For \(X\ge c_+\), choose
\(n=\lfloor\log_3(X/c_+)\rfloor\).
The \(2^n\) distinct points at this depth all have height
at most \(X\), so
\[
N_x(X)\ge2^n\ge\tfrac12(X/c_+)^\rho.
\]
Conversely, a representation of a point of height at most
\(X\) can only have length at most
\(\lfloor\log_3(X/c_-)\rfloor\). Counting all words
of these lengths, including the empty prefix, gives
\[
N_x(X)
\le\sum_{n=0}^{\lfloor\log_3(X/c_-)\rfloor}2^n
\le2(X/c_-)^\rho.
\]
Coincidences between different depths do not affect either
bound. The assertion concerns two-sided bounds, not an
asymptotic equivalent with a limiting leading constant.
\end{proof}

\section{Discussion}\label{sec:discussion}

The upper bound depends only on the degrees, but the spectrum
dimension also depends on the starting point. Consider
\(\phi(t)=t^2\) and \(\psi(t)=\alpha t^2\), where
\(\alpha=(3+4i)/5\) and \(h(\alpha)=\tfrac12\log5\).
Every composition of length \(n\) is \(u(t)=\alpha^m t^{2^n}\),
with every \(0\le m\le2^n-1\) occurring. For \(z\ne0,\infty\),
conjugating this monomial to a power map gives
\(\widehat h_u(z)=F_z(m/(2^n-1))\), where
\[
F_z(s)=\frac1{[K:\mathbb Q]}\sum_{v\in M_K}n_v
\max\{0,\log|z|_v+s\log|\alpha|_v\},\qquad 0\le s\le1,
\]
and \(K\) contains \(\alpha,z\). Only finitely many places contribute.
The grid parameters are dense in \([0,1]\), and \(F_z\) is
continuous and piecewise affine. Thus the completed spectrum is
\(F_z([0,1])\), an interval or a singleton. This pair admits no
dimension strictly between zero and one.

More explicitly,
\[
\begin{aligned}
\widehat X_{\phi,\psi}(1)&=[0,\tfrac12\log5],\\
\widehat X_{\phi,\psi}(2)&=[\log2,\log2+\tfrac12\log5],\\
\widehat X_{\phi,\psi}(1/5)&=\{\log5\}.
\end{aligned}
\]
At \(2\) and \(1/5\), both individual canonical heights are
positive, so both individual orbits are infinite. Their spectrum
dimensions are nevertheless different. The points \(0\) and infinity
give the singleton \(\{0\}\). Intermediate dimension occurs for a
different pair: the cubic example \(a=3,b=4\) in
Proposition~\ref{prop:rational-cubic-spectrum} has dimension
\(\log2/\log3\) at \(1/73\).

For the fixed cubic pair \(\theta_3,\theta_4\), the general bound is
\(\rho=\log2/\log3\). The common fixed points \(0,1\) give dimension
zero, and \(1/73\) gives dimension \(\rho\). These results do not
determine whether a value strictly between zero and \(\rho\) occurs.
Because the starting points are algebraic, the set of attained
dimensions is countable; it cannot be the whole real interval
\([0,\rho]\). One may instead ask which other values occur and
whether this countable set can be dense in that interval.

For the quadratic pair, \(s_0=1\) and
\(d_\phi^{-1}+d_\psi^{-1}=1\), the boundary of the strict
inequality in Corollary~\ref{cor:intermediate-dimension}.
At \(1\), the first-letter height intervals
\([0,h(\alpha)/2]\) and \([h(\alpha)/2,h(\alpha)]\) meet.
Passing to \(\phi^j,\psi^j\) gives containing intervals
\([0,2^{-j}h(\alpha)]\) and
\([(1-2^{-j})h(\alpha),h(\alpha)]\), separated for \(j\ge2\).
This illustrates the use of iterates in the height ping-pong
argument of Bell--Huang--Peng--Tucker
\cite[Proposition~3.1 and Lemma~3.2]{BHPT2024}.

Identical canonical-height functions give another source of
singleton spectra; see also
Kawaguchi--Silverman~\cite{KawaguchiSilverman2007Identical}.
If \(\widehat h_\phi=\widehat h_\psi=H\), then
\(H(u(P))=(\deg u)H(P)\) for every nonempty word. Since \(H-h\) is bounded,
taking the defining limit for \(u\) gives \(\widehat h_u=H\).
Consequently \(\widehat X_{\phi,\psi}(P)=\{H(P)\}\) for every \(P\).

Proposition~\ref{prop:orbit-counting-lower} turns positive spectrum dimension into a
lower orbit-counting exponent. The quadratic example is explicit:
\(\mathcal O(1)=\{\alpha^m:m\ge0\}\), since the exponent maps are
\(m\mapsto2m\) and \(m\mapsto2m+1\). As
\(h(\alpha^m)=mh(\alpha)\),
\[
N_1(X)=\left\lfloor\frac{X}{h(\alpha)}\right\rfloor+1
\qquad(X\ge0).
\]
Corollary~\ref{cor:cubic-orbit-count} gives another example with matching upper and lower
bounds, \(N_x(X)\asymp X^{\log2/\log3}\).

For general pairs, which values in \([0,s_0]\) occur among points
with both individual orbits infinite? How do local contributions
describe the zero- and positive-dimensional loci? Further counting
questions concern the critical exponent without a logarithmic
loss for independent degrees, and asymptotic equivalents beyond
two-sided bounds. It would also be useful to determine which of
these conclusions extend to more than two generators.

\clearpage
\bibliographystyle{plain}
\begingroup
\small\raggedright
\bibliography{references}
\endgroup

\end{document}